\def\MR#1{\href{http://www.ams.org/mathscinet-getitem?mr=#1}{MR#1}}
\def\arXiv#1{arXiv:\href{http://arXiv.org/abs/#1}{#1}}
\newtheorem{theorem}{Theorem}
\newtheorem{conjecture}[theorem]{Conjecture}
\newtheorem{lemma}[theorem]{Lemma}
\newtheorem{proposition}[theorem]{Proposition}
\newtheorem{corollary}[theorem]{Corollary}
\theoremstyle{definition}
\newtheorem{problem}[theorem]{Problem}
\numberwithin{theorem}{section}
\numberwithin{table}{section}
\numberwithin{figure}{section}
\numberwithin{equation}{section}
\DeclareMathOperator{\supp}{\mathrm{supp}}
\DeclareMathOperator{\vol}{\mathrm{vol}}
\newcommand{\R}{\mathbb{R}}
\newcommand{\C}{\mathbb{C}}
\newcommand{\Z}{\mathbb{Z}}
\newcommand{\bA}{\mathrm{A}}
\newcommand{\A}{\mathcal{A}}
\newcommand{\h}{\mathcal{H}}
\newcommand{\Schw}{\mathcal{S}_\textup{rad}}
\newcommand{\ft}{\widehat}
\renewcommand{\Re}{\mathop{\mathrm{Re}}}
\newcommand{\rhot}{\widetilde{\rho}}
\DeclareMathOperator{\rank}{\mathrm{rank}}
\title[An optimal uncertainty principle in twelve dimensions\dots]
{An optimal uncertainty principle\\ in twelve dimensions via modular forms}
\author{Henry Cohn and Felipe Gon\c{c}alves}
\date{February 20, 2019}
\address{Microsoft Research New England, One Memorial Drive, Cambridge, MA
02142, United States}
\email{cohn@microsoft.com}
\address{University of Alberta, Mathematical and Statistical Sciences, CAB 632,
Edmonton, Canada T6G 2G1}
\curraddr{Hausdorff Center for Mathematics, Universit\"at Bonn,
Endenicher Allee 60, 53115 Bonn, Germany}
\email{goncalve@math.uni-bonn.de}
\thanks{This work was begun during a visit by Gon\c{c}alves
to Microsoft Research New England.}
\begin{document}

\begin{abstract}
We prove an optimal bound in twelve dimensions for the uncertainty
principle of Bourgain, Clozel, and Kahane.  Suppose $f \colon \R^{12} \to
\R$ is an integrable function that is not identically zero.  Normalize its
Fourier transform $\ft f$ by $\ft f(\xi) = \int_{\R^d} f(x)e^{-2\pi i
\langle x, \xi\rangle}\, dx$, and suppose $\ft f$ is real-valued and
integrable.  We show that if $f(0) \le 0$, $\ft f(0) \le 0$, $f(x) \ge 0$
for $|x| \ge r_1$, and $\ft f(\xi) \ge 0$ for $|\xi| \ge r_2$, then $r_1r_2
\ge 2$, and this bound is sharp.  The construction of a function attaining
the bound is based on Viazovska's modular form techniques, and its
optimality follows from the existence of the Eisenstein series $E_6$. No
sharp bound is known, or even conjectured, in any other dimension. We also
develop a connection with the linear programming bound of Cohn and Elkies,
which lets us generalize the sign pattern of $f$ and $\ft f$ to develop a
complementary uncertainty principle.  This generalization unites the
uncertainty principle with the linear programming bound as aspects of a
broader theory.
\end{abstract}

\maketitle

\section{Introduction}

An uncertainty principle expresses a fundamental tradeoff between the
properties of a function $f$ and its Fourier transform $\ft f$.  The most
common variants measure the dispersion, with the tradeoff being that $f$ and
$\ft f$ cannot both be highly concentrated near the origin. Motivated by
applications to number theory, Bourgain, Clozel, and Kahane \cite{BCK} proved
an elegant uncertainty principle for the signs of $f$ and $\ft f$: if these
functions are nonpositive at the origin and not identically zero, then they
cannot both be nonnegative outside an arbitrarily small neighborhood of the
origin.  We can state this principle more formally as follows.

We say that a function $f\colon \R^d \to \R$ is \emph{eventually nonnegative}
(resp., \emph{nonpositive}) if $f(x)\geq 0$ (resp., $f(x)\leq 0$) for all
sufficiently large $|x|$. If that is the case, we let
\[
r(f)=\inf {} \{R \ge 0: \text{$f(x)$ has the same sign for $|x|\geq R$}\}
\]
be the radius of its last sign change.  We normalize the Fourier transform
$\ft f$ of $f$ by
\[
\ft f(\xi) = \int_{\R^d} f(x)e^{-2\pi i \langle x, \xi\rangle}\, dx.
\]
Let $\A_+(d)$ denote the set of functions $f\colon\R^d\to \R$ such that
\begin{enumerate}
\item $f\in L^1(\R^d)$, $\ft f\in L^1(\R^d)$, and $\ft f$ is real-valued
    (i.e., $f$ is even),
\item \label{cond:2} $f$ is eventually nonnegative while $\ft f(0)\leq 0$,
    and
\item \label{cond:3} $\ft f$ is eventually nonnegative while $f(0)\leq 0$.
\end{enumerate}
(Note the tension in \eqref{cond:2} between the eventual nonnegativity of $f$
and the inequality $\int_{\R^n} f = \ft f(0) \le 0$, and the analogous
tension in \eqref{cond:3}.)

The uncertainty principle of Bourgain, Clozel, and Kahane
\cite[Th\'eor\`eme~3.1]{BCK} says that
\[
\bA_+(d) := \inf_{f\in \A_+(d)\setminus\{0\}} \sqrt{r(f)r(\ft f\,)} >0.
\]
Taking the geometric mean of $r(f)$  and $r(\ft f\,)$ is a natural way to
eliminate scale dependence, because rescaling the input of $f$ preserves this
quantity. Thus, the uncertainty principle amounts to saying that $r(f)$ and
$r(\ft f\,)$ cannot both be made arbitrarily small if $f \in
\A_+(d)\setminus\{0\}$.

Gon\c calves, Oliveira e Silva, and Steinerberger \cite[Theorem~3]{GOS}
proved that for each dimension $d$ there exists a radial function
$f\in\A_+(d) \setminus\{0\}$ such that $f = \ft f$ and
\[
r(f) = r(\ft f\,) = \bA_+(d);
\]
furthermore, $\bA_+(d)$ is exactly the minimal value of $r(g)$ in the
following optimization problem:

\begin{problem}[$+1$ eigenfunction uncertainty principle]\label{+1problem}
Minimize $r(g)$ over all $g \colon \R^d \to \R$ such that
\begin{enumerate}
\item $g\in L^1(\R^d)\setminus\{0\}$ and $\ft g = g$, and
\item $g(0)=0$ and $g$ is eventually nonnegative.
\end{enumerate}
\end{problem}

The name ``$+1$ eigenfunction'' refers to the fact that $g$ is a
eigenfunction of the Fourier transform with eigenvalue $+1$.

Upper and lower bounds for $\bA_+(d)$ are known \cite{BCK,GOS}, but the exact
value has not previously been determined, or even conjectured, in any
dimension. Our main result is a solution of this problem in twelve
dimensions:

\begin{theorem}\label{thm_dimension12}
We have $\bA_+(12)=\sqrt{2}$. In particular, there exists a radial Schwartz
function $f\colon \R^{12} \to \R$ that is eventually nonnegative and
satisfies $\ft f=f$, $f(0)=0$, and
\[
r(f) = r(\ft f\,) = \sqrt{2}.
\]
Moreover, as a radial function $f$ has a double root at $|x|=0$, a single
root at $|x|=\sqrt{2}$, and double roots at $|x|=\sqrt{2j}$ for integers $j
\ge 2$.
\end{theorem}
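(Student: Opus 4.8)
The plan is to establish the two inequalities $\bA_+(12)\le\sqrt{2}$ and $\bA_+(12)\ge\sqrt{2}$ separately. By the theorem of Gon\c calves, Oliveira e Silva, and Steinerberger quoted above, $\bA_+(12)$ coincides with the optimal value of Problem~\ref{+1problem}, so it suffices to (i) exhibit a radial Schwartz function $f$ feasible for Problem~\ref{+1problem} with $r(f)=\sqrt{2}$ and the stated root pattern — this gives the upper bound together with the ``in particular'' assertion — and (ii) prove that every feasible $g$ satisfies $r(g)\ge\sqrt{2}$.

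For (i) I would follow Viazovska's modular-form method. Seek $f$ as a Laplace transform of a Gaussian,
\[
f(x)=\int_{0}^{\infty}\psi(t)\,e^{-\pi t|x|^{2}}\,dt\qquad\Bigl(\text{equivalently }f(x)=\int_{\mathcal C}\psi(z)\,e^{\pi i z|x|^{2}}\,dz,\ \mathcal C\subset\h\Bigr),
\]
where $\psi$ is a weakly holomorphic, possibly quasimodular, form for $\mathrm{SL}_2(\Z)$ of the weight dictated by the condition $\ft f=f$ in dimension $12$, so that the Gaussian identity $\ft{e^{-\pi t|x|^{2}}}=t^{-6}e^{-\pi|x|^{2}/t}$ is absorbed by the transformation $z\mapsto-1/z$. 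Then $\ft f=f$ becomes the functional equation of $\psi$ under $S$, while the periodicity $z\mapsto z+1$ makes the $q$-expansion of $\psi$ control the behavior of $f$ at the radii $|x|=\sqrt{2n}$; the contour-integral structure produces a double zero of $f$ at each $\sqrt{2j}$ with $j\ge2$ and at $|x|=0$ automatically, as in Viazovska's construction, and one imposes the remaining finitely many linear conditions — that $f$ be Schwartz (cancellation of the principal-part contributions at the cusps) and that $f(0)=0$ — by choosing $\psi$ in a suitable finite-dimensional family. The single, rather than double, zero at $\sqrt{2}$ is the one place where the extra vanishing is not imposed.

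The main obstacle, as always in this circle of ideas, is \emph{positivity}: one must show the candidate $f$ is $\le0$ on $[0,\sqrt{2}]$ and $\ge0$ on $[\sqrt{2},\infty)$, so that $r(f)=\sqrt{2}$ and not something larger. I would prove this by deforming $\mathcal C$ and producing, for $|x|$ in each range, a contour along which the integrand $\psi(z)e^{\pi i z|x|^{2}}$ has constant sign (or fixed sign up to a controlled correction), using the known zero locations of the Eisenstein-series factors appearing in $\psi$; the even order of vanishing at the $\sqrt{2j}$ with $j\ge2$ is exactly what prevents the sign of $f$ from changing there.

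For (ii), suppose $g$ is feasible for Problem~\ref{+1problem} with $r(g)<\sqrt{2}$; I would contradict this using a summation formula of the shape
\[
\sum_{n\ge0}\bigl(a_{n}\,g(\sqrt{2n})+b_{n}\,g'(\sqrt{2n})\bigr)=0,
\]
valid for every radial Schwartz function fixed by the Fourier transform, where $\sum_{n}a_{n}q^{n}$ and $\sum_{n}b_{n}q^{n}$ are weakly holomorphic modular forms and the $a_{n},b_{n}$ have a definite sign for $n\ge1$. Producing such a formula with the right positivity is precisely where the Eisenstein series $E_{6}$ is used: because $M_{6}(\mathrm{SL}_2(\Z))$ is one-dimensional, spanned by $E_{6}=1-504\sum_{n\ge1}\sigma_{5}(n)q^{n}$ (there are no weight-$6$ cusp forms), the relevant interpolation problem is solvable and the coefficients inherit a consistent sign — this is the ``complementary'' sign pattern linking the argument to the Cohn--Elkies linear programming bound. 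Granting the formula, $g(0)=0$ together with $g(\sqrt{2n})\ge0$ for $n\ge1$ (since $\sqrt{2n}\ge\sqrt{2}>r(g)$) makes every term of one sign; at a zero $\sqrt{2n}$ of $g$ the one-sided nonnegativity forces a local minimum, hence $g'(\sqrt{2n})=0$, so the sum of equal-sign terms vanishes termwise, and feeding this back through the interpolation forces $g\equiv0$, a contradiction. Thus $r(g)\ge\sqrt{2}$, and examining the equality case recovers the root structure found in~(i). A secondary difficulty is establishing the summation formula itself with explicit control of $a_{n}$ and $b_{n}$ — a modular-forms computation parallel to~(i) but for the dual sign pattern.
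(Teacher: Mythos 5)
Your part (i) is essentially the paper's route: the paper applies Viazovska's method (Proposition~\ref{prop:psi} with $d=12$, eigenvalue $+1$, weight $2-d/2=-4$ for $\Gamma(2)$) to the explicit form $\psi=\bigl(\Theta_{00}^4+\Theta_{10}^4\bigr)\Theta_{01}^{12}/\Delta$ of \eqref{eq:psi-def}. Two remarks on your plan. First, you do not need $f\le 0$ on $[0,\sqrt{2}]$: membership in $\A_+(12)$ and the bound $r(f)\le\sqrt{2}$ require only $f(0)=0$ and $f(x)\ge 0$ for $|x|\ge\sqrt{2}$, so the sign of $f$ inside the ball is irrelevant to the upper bound. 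Second, the positivity you flag as the main obstacle is handled in the paper not by a contour-sign deformation but by choosing $\psi$ to be manifestly positive on the imaginary axis, so that the representation $f(x)=\sin\bigl(\pi|x|^2/2\bigr)^2\int_0^\infty\psi(it)e^{-\pi|x|^2t}\,dt$ for $|x|>\sqrt{2}$ is visibly nonnegative, with double roots at $\sqrt{2j}$, $j\ge 2$; the single root at $\sqrt{2}$ and double root at $0$ follow by subtracting the $e^{2\pi t}-264$ part of $\psi(it)$ and computing the residual closed form. Your sketch is compatible with this but does not identify the form, which is where the actual work lies.

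The genuine gap is in part (ii). Your argument ends with ``feeding this back through the interpolation forces $g\equiv 0$,'' which presupposes a free-interpolation or uniqueness theorem: that a radial $+1$ eigenfunction of the Fourier transform in $\R^{12}$ vanishing at $0$ and to second order at every $\sqrt{2n}$, $n\ge 1$, must vanish identically. No such Radchenko--Viazovska-type theorem is available in dimension $12$, you give no construction of the two generating-function kernels with derivative terms of controlled sign (and the signs of the $g'(\sqrt{2n})$ terms are not controlled before you already know $g(\sqrt{2n})=0$), and proving such a statement would be far harder than the theorem itself. The paper needs none of this. It uses only the value formula of Lemma~\ref{E6_poisson_summation}, coming directly from $E_6$ with coefficients $c_j=504\sigma_5(j)>0$ and no derivative terms: for $f\in\A_+(12)$ with both $r(f)$ and $r(\ft f\,)$ at most $\sqrt{2}$, all terms have one sign and hence $f(\sqrt{2j})=\ft f(\sqrt{2j})=0$ for all $j\ge 0$ (Lemma~\ref{lemma:vanishing}, with a mollification step to reduce general $L^1$ members of $\A_+(12)$ to Schwartz functions). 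The missing idea that replaces your interpolation step is a rescaling trick exploiting the \emph{strict} inequality: if $r(f)r(\ft f\,)<2$, one may rescale so both radii are strictly below $\sqrt{2}$, and then every dilate $x\mapsto f(\lambda x)$ with $\lambda$ near $1$ also satisfies the hypotheses, so $f$ and $\ft f$ vanish on whole intervals $(\sqrt{2j}/\lambda_0,\sqrt{2j}\,\lambda_0)$; since $\sqrt{2j+2}/\sqrt{2j}\to 1$, these intervals cover a half-line, so $f$ and $\ft f$ are compactly supported and hence $f\equiv 0$. This yields $\bA_+(12)\ge\sqrt{2}$ for all of $\A_+(12)$ without any uniqueness theorem, and the root structure in the statement is read off from the construction in (i), not from an equality-case analysis of (ii).
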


See Figure~\ref{figure:dim12} for plots. The appealing simplicity of this
answer seems to be unique to twelve dimensions, and we have been unable to
conjecture a closed form for $\bA_+(d)$ in any other dimension $d$.  See
Section~\ref{section:numerics} for an account of the numerical evidence,
which displays noteworthy patterns and regularity despite the lack of any
exact conjectures.

We find the exceptional role of twelve dimensions surprising: why should a
seemingly arbitrary dimension admit an exact solution with mysterious
arithmetic structure not shared by other dimensions?  As far as we are aware,
Theorem~\ref{thm_dimension12} is the first time such behavior has arisen in
an uncertainty principle.

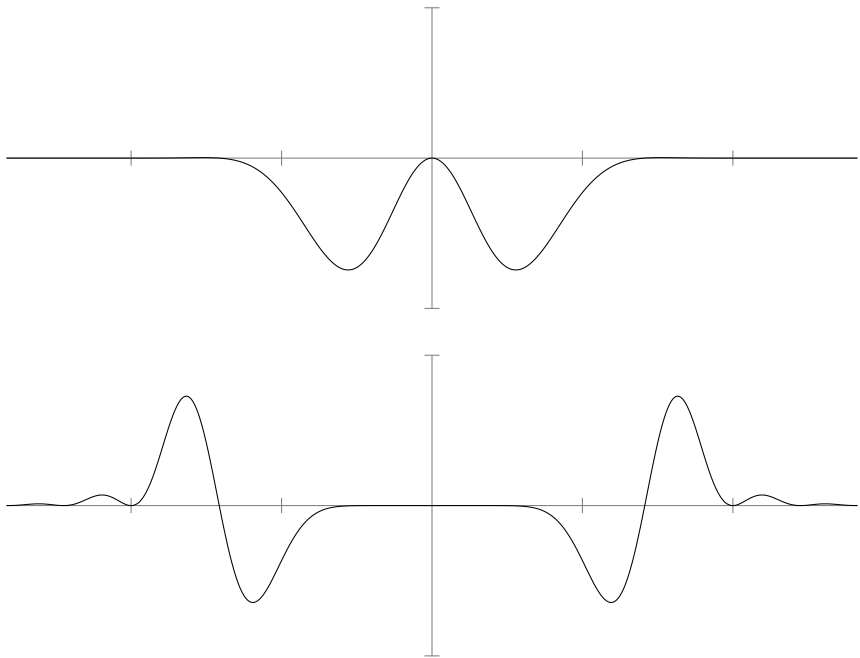
\begin{figure}
\begin{center}
\begin{tikzpicture}
\draw[black!50] (-5.65685,0) -- (5.65685,0);
\draw[black!50] (0,-2) -- (0,2);
\draw[black!50] (-0.1,2) -- (0.1,2);
\draw[black!50] (-0.1,-2) -- (0.1,-2);
\draw[black!50] (2,-0.1) -- (2,0.1);
\draw[black!50] (4,-0.1) -- (4,0.1);
\draw[black!50] (-2,-0.1) -- (-2,0.1);
\draw[black!50] (-4,-0.1) -- (-4,0.1);
\draw (-5.65685,0) -- (-5.64579,1.59528 E-10) -- (-5.63474,6.65799 E-10) -- (-5.62368,1.56060 E-9) -- (-5.61262,2.88572 E-9) -- (-5.60156,4.68256 E-9) -- (-5.59050,6.99164 E-9) -- (-5.57944,9.85213 E-9) -- (-5.56838,1.33014 E-8) -- (-5.55732,1.73743 E-8) -- (-5.54626,2.21029 E-8) -- (-5.53520,2.75159 E-8) -- (-5.52414,3.36376 E-8) -- (-5.51308,4.04882 E-8) -- (-5.50202,4.80823 E-8) -- (-5.49096,5.64291 E-8) -- (-5.47990,6.55315 E-8) -- (-5.46885,7.53857 E-8) -- (-5.45779,8.59807 E-8) -- (-5.44673,9.72978 E-8) -- (-5.43567,1.09310 E-7) -- (-5.42461,1.21983 E-7) -- (-5.41355,1.35273 E-7) -- (-5.40249,1.49126 E-7) -- (-5.39143,1.63482 E-7) -- (-5.38037,1.78270 E-7) -- (-5.36931,1.93410 E-7) -- (-5.35825,2.08814 E-7) -- (-5.34719,2.24383 E-7) -- (-5.33613,2.40014 E-7) -- (-5.32507,2.55592 E-7) -- (-5.31401,2.70996 E-7) -- (-5.30296,2.86100 E-7) -- (-5.29190,3.00769 E-7) -- (-5.28084,3.14867 E-7) -- (-5.26978,3.28252 E-7) -- (-5.25872,3.40779 E-7) -- (-5.24766,3.52304 E-7) -- (-5.23660,3.62681 E-7) -- (-5.22554,3.71769 E-7) -- (-5.21448,3.79427 E-7) -- (-5.20342,3.85522 E-7) -- (-5.19236,3.89929 E-7) -- (-5.18130,3.92531 E-7) -- (-5.17024,3.93224 E-7) -- (-5.15918,3.91917 E-7) -- (-5.14812,3.88538 E-7) -- (-5.13707,3.83029 E-7) -- (-5.12601,3.75359 E-7) -- (-5.11495,3.65516 E-7) -- (-5.10389,3.53517 E-7) -- (-5.09283,3.39406 E-7) -- (-5.08177,3.23259 E-7) -- (-5.07071,3.05185 E-7) -- (-5.05965,2.85328 E-7) -- (-5.04859,2.63870 E-7) -- (-5.03753,2.41034 E-7) -- (-5.02647,2.17083 E-7) -- (-5.01541,1.92323 E-7) -- (-5.00435,1.67107 E-7) -- (-4.99329,1.41833 E-7) -- (-4.98223,1.16944 E-7) -- (-4.97117,9.29320 E-8) -- (-4.96012,7.03376 E-8) -- (-4.94906,4.97481 E-8) -- (-4.93800,3.17983 E-8) -- (-4.92694,1.71696 E-8) -- (-4.91588,6.58863 E-9) -- (-4.90482,8.25370 E-10) -- (-4.89376,6.91131 E-10) -- (-4.88270,7.03572 E-9) -- (-4.87164,2.07443 E-8) -- (-4.86058,4.27333 E-8) -- (-4.84952,7.39469 E-8) -- (-4.83846,1.15351 E-7) -- (-4.82740,1.67929 E-7) -- (-4.81634,2.32675 E-7) -- (-4.80528,3.10587 E-7) -- (-4.79423,4.02662 E-7) -- (-4.78317,5.09885 E-7) -- (-4.77211,6.33223 E-7) -- (-4.76105,7.73617 E-7) -- (-4.74999,9.31971 E-7) -- (-4.73893,1.10914 E-6) -- (-4.72787,1.30594 E-6) -- (-4.71681,1.52309 E-6) -- (-4.70575,1.76127 E-6) -- (-4.69469,2.02104 E-6) -- (-4.68363,2.30287 E-6) -- (-4.67257,2.60715 E-6) -- (-4.66151,2.93410 E-6) -- (-4.65045,3.28384 E-6) -- (-4.63939,3.65635 E-6) -- (-4.62834,4.05145 E-6) -- (-4.61728,4.46877 E-6) -- (-4.60622,4.90781 E-6) -- (-4.59516,5.36786 E-6) -- (-4.58410,5.84801 E-6) -- (-4.57304,6.34718 E-6) -- (-4.56198,6.86406 E-6) -- (-4.55092,7.39712 E-6) -- (-4.53986,7.94462 E-6) -- (-4.52880,8.50461 E-6) -- (-4.51774,9.07489 E-6) -- (-4.50668,9.65305 E-6) -- (-4.49562,1.02365 E-5) -- (-4.48456,1.08222 E-5) -- (-4.47350,1.14073 E-5) -- (-4.46245,1.19884 E-5) -- (-4.45139,1.25621 E-5) -- (-4.44033,1.31246 E-5) -- (-4.42927,1.36721 E-5) -- (-4.41821,1.42006 E-5) -- (-4.40715,1.47059 E-5) -- (-4.39609,1.51839 E-5) -- (-4.38503,1.56301 E-5) -- (-4.37397,1.60401 E-5) -- (-4.36291,1.64097 E-5) -- (-4.35185,1.67342 E-5) -- (-4.34079,1.70094 E-5) -- (-4.32973,1.72310 E-5) -- (-4.31867,1.73950 E-5) -- (-4.30761,1.74973 E-5) -- (-4.29655,1.75343 E-5) -- (-4.28550,1.75027 E-5) -- (-4.27444,1.73994 E-5) -- (-4.26338,1.72220 E-5) -- (-4.25232,1.69684 E-5) -- (-4.24126,1.66371 E-5) -- (-4.23020,1.62273 E-5) -- (-4.21914,1.57391 E-5) -- (-4.20808,1.51731 E-5) -- (-4.19702,1.45311 E-5) -- (-4.18596,1.38158 E-5) -- (-4.17490,1.30310 E-5) -- (-4.16384,1.21818 E-5) -- (-4.15278,1.12744 E-5) -- (-4.14172,1.03166 E-5) -- (-4.13066,9.31752 E-6) -- (-4.11961,8.28814 E-6) -- (-4.10855,7.24096 E-6) -- (-4.09749,6.19040 E-6) -- (-4.08643,5.15279 E-6) -- (-4.07537,4.14652 E-6) -- (-4.06431,3.19213 E-6) -- (-4.05325,2.31242 E-6) -- (-4.04219,1.53260 E-6) -- (-4.03113,8.80302 E-7) -- (-4.02007,3.85774 E-7) -- (-4.00901,8.19284 E-8) -- (-3.99795,4.45085 E-9) -- (-3.98689,1.91891 E-7) -- (-3.97583,6.85746 E-7) -- (-3.96477,1.53054 E-6) -- (-3.95372,2.77391 E-6) -- (-3.94266,4.46663 E-6) -- (-3.93160,6.66273 E-6) -- (-3.92054,9.41950 E-6) -- (-3.90948,1.27975 E-5) -- (-3.89842,1.68608 E-5) -- (-3.88736,2.16765 E-5) -- (-3.87630,2.73154 E-5) -- (-3.86524,3.38514 E-5) -- (-3.85418,4.13618 E-5) -- (-3.84312,4.99273 E-5) -- (-3.83206,5.96317 E-5) -- (-3.82100,7.05620 E-5) -- (-3.80994,8.28081 E-5) -- (-3.79888,9.64632 E-5) -- (-3.78783,0.000111623) -- (-3.77677,0.000128386) -- (-3.76571,0.000146853) -- (-3.75465,0.000167128) -- (-3.74359,0.000189316) -- (-3.73253,0.000213524) -- (-3.72147,0.000239861) -- (-3.71041,0.000268437) -- (-3.69935,0.000299363) -- (-3.68829,0.000332751) -- (-3.67723,0.000368712) -- (-3.66617,0.000407359) -- (-3.65511,0.000448802) -- (-3.64405,0.000493151) -- (-3.63299,0.000540515) -- (-3.62194,0.000591000) -- (-3.61088,0.000644710) -- (-3.59982,0.000701745) -- (-3.58876,0.000762201) -- (-3.57770,0.000826170) -- (-3.56664,0.000893739) -- (-3.55558,0.000964988) -- (-3.54452,0.00103999) -- (-3.53346,0.00111882) -- (-3.52240,0.00120152) -- (-3.51134,0.00128815) -- (-3.50028,0.00137874) -- (-3.48922,0.00147333) -- (-3.47816,0.00157192) -- (-3.46710,0.00167452) -- (-3.45604,0.00178113) -- (-3.44499,0.00189170) -- (-3.43393,0.00200620) -- (-3.42287,0.00212456) -- (-3.41181,0.00224672) -- (-3.40075,0.00237256) -- (-3.38969,0.00250197) -- (-3.37863,0.00263481) -- (-3.36757,0.00277091) -- (-3.35651,0.00291008) -- (-3.34545,0.00305211) -- (-3.33439,0.00319674) -- (-3.32333,0.00334372) -- (-3.31227,0.00349273) -- (-3.30121,0.00364344) -- (-3.29015,0.00379549) -- (-3.27910,0.00394848) -- (-3.26804,0.00410197) -- (-3.25698,0.00425548) -- (-3.24592,0.00440851) -- (-3.23486,0.00456051) -- (-3.22380,0.00471087) -- (-3.21274,0.00485897) -- (-3.20168,0.00500413) -- (-3.19062,0.00514562) -- (-3.17956,0.00528266) -- (-3.16850,0.00541445) -- (-3.15744,0.00554011) -- (-3.14638,0.00565872) -- (-3.13532,0.00576931) -- (-3.12426,0.00587085) -- (-3.11321,0.00596227) -- (-3.10215,0.00604242) -- (-3.09109,0.00611011) -- (-3.08003,0.00616410) -- (-3.06897,0.00620306) -- (-3.05791,0.00622564) -- (-3.04685,0.00623039) -- (-3.03579,0.00621583) -- (-3.02473,0.00618039) -- (-3.01367,0.00612247) -- (-3.00261,0.00604036) -- (-2.99155,0.00593232) -- (-2.98049,0.00579654) -- (-2.96943,0.00563113) -- (-2.95837,0.00543414) -- (-2.94732,0.00520356) -- (-2.93626,0.00493730) -- (-2.92520,0.00463321) -- (-2.91414,0.00428908) -- (-2.90308,0.00390262) -- (-2.89202,0.00347148) -- (-2.88096,0.00299323) -- (-2.86990,0.00246539) -- (-2.85884,0.00188541) -- (-2.84778,0.00125067) -- (-2.83672,0.000558485) -- (-2.82566,-0.000193889) -- (-2.81460,-0.00100926) -- (-2.80354,-0.00189051) -- (-2.79248,-0.00284056) -- (-2.78142,-0.00386241) -- (-2.77037,-0.00495911) -- (-2.75931,-0.00613377) -- (-2.74825,-0.00738954) -- (-2.73719,-0.00872965) -- (-2.72613,-0.0101574) -- (-2.71507,-0.0116760) -- (-2.70401,-0.0132889) -- (-2.69295,-0.0149994) -- (-2.68189,-0.0168111) -- (-2.67083,-0.0187274) -- (-2.65977,-0.0207519) -- (-2.64871,-0.0228880) -- (-2.63765,-0.0251394) -- (-2.62659,-0.0275097) -- (-2.61553,-0.0300024) -- (-2.60448,-0.0326214) -- (-2.59342,-0.0353702) -- (-2.58236,-0.0382524) -- (-2.57130,-0.0412719) -- (-2.56024,-0.0444323) -- (-2.54918,-0.0477373) -- (-2.53812,-0.0511905) -- (-2.52706,-0.0547957) -- (-2.51600,-0.0585565) -- (-2.50494,-0.0624766) -- (-2.49388,-0.0665595) -- (-2.48282,-0.0708089) -- (-2.47176,-0.0752283) -- (-2.46070,-0.0798213) -- (-2.44964,-0.0845912) -- (-2.43859,-0.0895417) -- (-2.42753,-0.0946759) -- (-2.41647,-0.0999973) -- (-2.40541,-0.105509) -- (-2.39435,-0.111214) -- (-2.38329,-0.117116) -- (-2.37223,-0.123218) -- (-2.36117,-0.129522) -- (-2.35011,-0.136031) -- (-2.33905,-0.142749) -- (-2.32799,-0.149677) -- (-2.31693,-0.156819) -- (-2.30587,-0.164176) -- (-2.29481,-0.171751) -- (-2.28375,-0.179546) -- (-2.27270,-0.187562) -- (-2.26164,-0.195803) -- (-2.25058,-0.204269) -- (-2.23952,-0.212962) -- (-2.22846,-0.221883) -- (-2.21740,-0.231034) -- (-2.20634,-0.240415) -- (-2.19528,-0.250028) -- (-2.18422,-0.259872) -- (-2.17316,-0.269950) -- (-2.16210,-0.280259) -- (-2.15104,-0.290802) -- (-2.13998,-0.301577) -- (-2.12892,-0.312584) -- (-2.11786,-0.323823) -- (-2.10680,-0.335293) -- (-2.09575,-0.346993) -- (-2.08469,-0.358920) -- (-2.07363,-0.371075) -- (-2.06257,-0.383455) -- (-2.05151,-0.396058) -- (-2.04045,-0.408882) -- (-2.02939,-0.421924) -- (-2.01833,-0.435181) -- (-2.00727,-0.448651) -- (-1.99621,-0.462330) -- (-1.98515,-0.476214) -- (-1.97409,-0.490300) -- (-1.96303,-0.504583) -- (-1.95197,-0.519059) -- (-1.94091,-0.533723) -- (-1.92986,-0.548571) -- (-1.91880,-0.563596) -- (-1.90774,-0.578794) -- (-1.89668,-0.594159) -- (-1.88562,-0.609684) -- (-1.87456,-0.625362) -- (-1.86350,-0.641189) -- (-1.85244,-0.657155) -- (-1.84138,-0.673254) -- (-1.83032,-0.689479) -- (-1.81926,-0.705822) -- (-1.80820,-0.722274) -- (-1.79714,-0.738827) -- (-1.78608,-0.755473) -- (-1.77502,-0.772202) -- (-1.76397,-0.789005) -- (-1.75291,-0.805874) -- (-1.74185,-0.822797) -- (-1.73079,-0.839766) -- (-1.71973,-0.856769) -- (-1.70867,-0.873797) -- (-1.69761,-0.890839) -- (-1.68655,-0.907884) -- (-1.67549,-0.924921) -- (-1.66443,-0.941938) -- (-1.65337,-0.958923) -- (-1.64231,-0.975866) -- (-1.63125,-0.992754) -- (-1.62019,-1.00958) -- (-1.60913,-1.02632) -- (-1.59808,-1.04297) -- (-1.58702,-1.05951) -- (-1.57596,-1.07594) -- (-1.56490,-1.09224) -- (-1.55384,-1.10839) -- (-1.54278,-1.12439) -- (-1.53172,-1.14022) -- (-1.52066,-1.15587) -- (-1.50960,-1.17132) -- (-1.49854,-1.18656) -- (-1.48748,-1.20158) -- (-1.47642,-1.21636) -- (-1.46536,-1.23089) -- (-1.45430,-1.24516) -- (-1.44324,-1.25915) -- (-1.43218,-1.27285) -- (-1.42113,-1.28624) -- (-1.41007,-1.29932) -- (-1.39901,-1.31207) -- (-1.38795,-1.32448) -- (-1.37689,-1.33652) -- (-1.36583,-1.34820) -- (-1.35477,-1.35950) -- (-1.34371,-1.37040) -- (-1.33265,-1.38089) -- (-1.32159,-1.39096) -- (-1.31053,-1.40060) -- (-1.29947,-1.40979) -- (-1.28841,-1.41853) -- (-1.27735,-1.42680) -- (-1.26629,-1.43459) -- (-1.25524,-1.44190) -- (-1.24418,-1.44870) -- (-1.23312,-1.45499) -- (-1.22206,-1.46077) -- (-1.21100,-1.46601) -- (-1.19994,-1.47072) -- (-1.18888,-1.47488) -- (-1.17782,-1.47848) -- (-1.16676,-1.48152) -- (-1.15570,-1.48399) -- (-1.14464,-1.48588) -- (-1.13358,-1.48718) -- (-1.12252,-1.48790) -- (-1.11146,-1.48802) -- (-1.10040,-1.48754) -- (-1.08935,-1.48645) -- (-1.07829,-1.48475) -- (-1.06723,-1.48245) -- (-1.05617,-1.47952) -- (-1.04511,-1.47598) -- (-1.03405,-1.47181) -- (-1.02299,-1.46703) -- (-1.01193,-1.46162) -- (-1.00087,-1.45559) -- (-0.989811,-1.44895) -- (-0.978752,-1.44168) -- (-0.967693,-1.43380) -- (-0.956633,-1.42530) -- (-0.945574,-1.41619) -- (-0.934515,-1.40647) -- (-0.923455,-1.39615) -- (-0.912396,-1.38524) -- (-0.901337,-1.37373) -- (-0.890277,-1.36164) -- (-0.879218,-1.34897) -- (-0.868158,-1.33573) -- (-0.857099,-1.32193) -- (-0.846040,-1.30759) -- (-0.834980,-1.29270) -- (-0.823921,-1.27728) -- (-0.812862,-1.26134) -- (-0.801802,-1.24490) -- (-0.790743,-1.22796) -- (-0.779684,-1.21054) -- (-0.768624,-1.19265) -- (-0.757565,-1.17431) -- (-0.746506,-1.15554) -- (-0.735446,-1.13634) -- (-0.724387,-1.11674) -- (-0.713328,-1.09676) -- (-0.702268,-1.07640) -- (-0.691209,-1.05569) -- (-0.680150,-1.03464) -- (-0.669090,-1.01329) -- (-0.658031,-0.991633) -- (-0.646972,-0.969707) -- (-0.635912,-0.947526) -- (-0.624853,-0.925112) -- (-0.613794,-0.902488) -- (-0.602734,-0.879673) -- (-0.591675,-0.856692) -- (-0.580616,-0.833565) -- (-0.569556,-0.810317) -- (-0.558497,-0.786970) -- (-0.547438,-0.763547) -- (-0.536378,-0.740073) -- (-0.525319,-0.716572) -- (-0.514259,-0.693067) -- (-0.503200,-0.669582) -- (-0.492141,-0.646143) -- (-0.481081,-0.622773) -- (-0.470022,-0.599498) -- (-0.458963,-0.576341) -- (-0.447903,-0.553328) -- (-0.436844,-0.530482) -- (-0.425785,-0.507829) -- (-0.414725,-0.485393) -- (-0.403666,-0.463198) -- (-0.392607,-0.441269) -- (-0.381547,-0.419629) -- (-0.370488,-0.398302) -- (-0.359429,-0.377312) -- (-0.348369,-0.356682) -- (-0.337310,-0.336435) -- (-0.326251,-0.316593) -- (-0.315191,-0.297180) -- (-0.304132,-0.278215) -- (-0.293073,-0.259722) -- (-0.282013,-0.241721) -- (-0.270954,-0.224232) -- (-0.259895,-0.207275) -- (-0.248835,-0.190870) -- (-0.237776,-0.175036) -- (-0.226717,-0.159789) -- (-0.215657,-0.145150) -- (-0.204598,-0.131133) -- (-0.193539,-0.117755) -- (-0.182479,-0.105032) -- (-0.171420,-0.0929784) -- (-0.160360,-0.0816084) -- (-0.149301,-0.0709351) -- (-0.138242,-0.0609710) -- (-0.127182,-0.0517277) -- (-0.116123,-0.0432160) -- (-0.105064,-0.0354460) -- (-0.0940044,-0.0284267) -- (-0.0829451,-0.0221663) -- (-0.0718857,-0.0166723) -- (-0.0608264,-0.0119511) -- (-0.0497670,-0.00800817) -- (-0.0387077,-0.00484826) -- (-0.0276484,-0.00247506) -- (-0.0165890,-0.000891372) -- (-0.00552967,-9.90608 E-5) -- (0.00552967,-9.90608 E-5) -- (0.0165890,-0.000891372) -- (0.0276484,-0.00247506) -- (0.0387077,-0.00484826) -- (0.0497670,-0.00800817) -- (0.0608264,-0.0119511) -- (0.0718857,-0.0166723) -- (0.0829451,-0.0221663) -- (0.0940044,-0.0284267) -- (0.105064,-0.0354460) -- (0.116123,-0.0432160) -- (0.127182,-0.0517277) -- (0.138242,-0.0609710) -- (0.149301,-0.0709351) -- (0.160360,-0.0816084) -- (0.171420,-0.0929784) -- (0.182479,-0.105032) -- (0.193539,-0.117755) -- (0.204598,-0.131133) -- (0.215657,-0.145150) -- (0.226717,-0.159789) -- (0.237776,-0.175036) -- (0.248835,-0.190870) -- (0.259895,-0.207275) -- (0.270954,-0.224232) -- (0.282013,-0.241721) -- (0.293073,-0.259722) -- (0.304132,-0.278215) -- (0.315191,-0.297180) -- (0.326251,-0.316593) -- (0.337310,-0.336435) -- (0.348369,-0.356682) -- (0.359429,-0.377312) -- (0.370488,-0.398302) -- (0.381547,-0.419629) -- (0.392607,-0.441269) -- (0.403666,-0.463198) -- (0.414725,-0.485393) -- (0.425785,-0.507829) -- (0.436844,-0.530482) -- (0.447903,-0.553328) -- (0.458963,-0.576341) -- (0.470022,-0.599498) -- (0.481081,-0.622773) -- (0.492141,-0.646143) -- (0.503200,-0.669582) -- (0.514259,-0.693067) -- (0.525319,-0.716572) -- (0.536378,-0.740073) -- (0.547438,-0.763547) -- (0.558497,-0.786970) -- (0.569556,-0.810317) -- (0.580616,-0.833565) -- (0.591675,-0.856692) -- (0.602734,-0.879673) -- (0.613794,-0.902488) -- (0.624853,-0.925112) -- (0.635912,-0.947526) -- (0.646972,-0.969707) -- (0.658031,-0.991633) -- (0.669090,-1.01329) -- (0.680150,-1.03464) -- (0.691209,-1.05569) -- (0.702268,-1.07640) -- (0.713328,-1.09676) -- (0.724387,-1.11674) -- (0.735446,-1.13634) -- (0.746506,-1.15554) -- (0.757565,-1.17431) -- (0.768624,-1.19265) -- (0.779684,-1.21054) -- (0.790743,-1.22796) -- (0.801802,-1.24490) -- (0.812862,-1.26134) -- (0.823921,-1.27728) -- (0.834980,-1.29270) -- (0.846040,-1.30759) -- (0.857099,-1.32193) -- (0.868158,-1.33573) -- (0.879218,-1.34897) -- (0.890277,-1.36164) -- (0.901337,-1.37373) -- (0.912396,-1.38524) -- (0.923455,-1.39615) -- (0.934515,-1.40647) -- (0.945574,-1.41619) -- (0.956633,-1.42530) -- (0.967693,-1.43380) -- (0.978752,-1.44168) -- (0.989811,-1.44895) -- (1.00087,-1.45559) -- (1.01193,-1.46162) -- (1.02299,-1.46703) -- (1.03405,-1.47181) -- (1.04511,-1.47598) -- (1.05617,-1.47952) -- (1.06723,-1.48245) -- (1.07829,-1.48475) -- (1.08935,-1.48645) -- (1.10040,-1.48754) -- (1.11146,-1.48802) -- (1.12252,-1.48790) -- (1.13358,-1.48718) -- (1.14464,-1.48588) -- (1.15570,-1.48399) -- (1.16676,-1.48152) -- (1.17782,-1.47848) -- (1.18888,-1.47488) -- (1.19994,-1.47072) -- (1.21100,-1.46601) -- (1.22206,-1.46077) -- (1.23312,-1.45499) -- (1.24418,-1.44870) -- (1.25524,-1.44190) -- (1.26629,-1.43459) -- (1.27735,-1.42680) -- (1.28841,-1.41853) -- (1.29947,-1.40979) -- (1.31053,-1.40060) -- (1.32159,-1.39096) -- (1.33265,-1.38089) -- (1.34371,-1.37040) -- (1.35477,-1.35950) -- (1.36583,-1.34820) -- (1.37689,-1.33652) -- (1.38795,-1.32448) -- (1.39901,-1.31207) -- (1.41007,-1.29932) -- (1.42113,-1.28624) -- (1.43218,-1.27285) -- (1.44324,-1.25915) -- (1.45430,-1.24516) -- (1.46536,-1.23089) -- (1.47642,-1.21636) -- (1.48748,-1.20158) -- (1.49854,-1.18656) -- (1.50960,-1.17132) -- (1.52066,-1.15587) -- (1.53172,-1.14022) -- (1.54278,-1.12439) -- (1.55384,-1.10839) -- (1.56490,-1.09224) -- (1.57596,-1.07594) -- (1.58702,-1.05951) -- (1.59808,-1.04297) -- (1.60913,-1.02632) -- (1.62019,-1.00958) -- (1.63125,-0.992754) -- (1.64231,-0.975866) -- (1.65337,-0.958923) -- (1.66443,-0.941938) -- (1.67549,-0.924921) -- (1.68655,-0.907884) -- (1.69761,-0.890839) -- (1.70867,-0.873797) -- (1.71973,-0.856769) -- (1.73079,-0.839766) -- (1.74185,-0.822797) -- (1.75291,-0.805874) -- (1.76397,-0.789005) -- (1.77502,-0.772202) -- (1.78608,-0.755473) -- (1.79714,-0.738827) -- (1.80820,-0.722274) -- (1.81926,-0.705822) -- (1.83032,-0.689479) -- (1.84138,-0.673254) -- (1.85244,-0.657155) -- (1.86350,-0.641189) -- (1.87456,-0.625362) -- (1.88562,-0.609684) -- (1.89668,-0.594159) -- (1.90774,-0.578794) -- (1.91880,-0.563596) -- (1.92986,-0.548571) -- (1.94091,-0.533723) -- (1.95197,-0.519059) -- (1.96303,-0.504583) -- (1.97409,-0.490300) -- (1.98515,-0.476214) -- (1.99621,-0.462330) -- (2.00727,-0.448651) -- (2.01833,-0.435181) -- (2.02939,-0.421924) -- (2.04045,-0.408882) -- (2.05151,-0.396058) -- (2.06257,-0.383455) -- (2.07363,-0.371075) -- (2.08469,-0.358920) -- (2.09575,-0.346993) -- (2.10680,-0.335293) -- (2.11786,-0.323823) -- (2.12892,-0.312584) -- (2.13998,-0.301577) -- (2.15104,-0.290802) -- (2.16210,-0.280259) -- (2.17316,-0.269950) -- (2.18422,-0.259872) -- (2.19528,-0.250028) -- (2.20634,-0.240415) -- (2.21740,-0.231034) -- (2.22846,-0.221883) -- (2.23952,-0.212962) -- (2.25058,-0.204269) -- (2.26164,-0.195803) -- (2.27270,-0.187562) -- (2.28375,-0.179546) -- (2.29481,-0.171751) -- (2.30587,-0.164176) -- (2.31693,-0.156819) -- (2.32799,-0.149677) -- (2.33905,-0.142749) -- (2.35011,-0.136031) -- (2.36117,-0.129522) -- (2.37223,-0.123218) -- (2.38329,-0.117116) -- (2.39435,-0.111214) -- (2.40541,-0.105509) -- (2.41647,-0.0999973) -- (2.42753,-0.0946759) -- (2.43859,-0.0895417) -- (2.44964,-0.0845912) -- (2.46070,-0.0798213) -- (2.47176,-0.0752283) -- (2.48282,-0.0708089) -- (2.49388,-0.0665595) -- (2.50494,-0.0624766) -- (2.51600,-0.0585565) -- (2.52706,-0.0547957) -- (2.53812,-0.0511905) -- (2.54918,-0.0477373) -- (2.56024,-0.0444323) -- (2.57130,-0.0412719) -- (2.58236,-0.0382524) -- (2.59342,-0.0353702) -- (2.60448,-0.0326214) -- (2.61553,-0.0300024) -- (2.62659,-0.0275097) -- (2.63765,-0.0251394) -- (2.64871,-0.0228880) -- (2.65977,-0.0207519) -- (2.67083,-0.0187274) -- (2.68189,-0.0168111) -- (2.69295,-0.0149994) -- (2.70401,-0.0132889) -- (2.71507,-0.0116760) -- (2.72613,-0.0101574) -- (2.73719,-0.00872965) -- (2.74825,-0.00738954) -- (2.75931,-0.00613377) -- (2.77037,-0.00495911) -- (2.78142,-0.00386241) -- (2.79248,-0.00284056) -- (2.80354,-0.00189051) -- (2.81460,-0.00100926) -- (2.82566,-0.000193889) -- (2.83672,0.000558485) -- (2.84778,0.00125067) -- (2.85884,0.00188541) -- (2.86990,0.00246539) -- (2.88096,0.00299323) -- (2.89202,0.00347148) -- (2.90308,0.00390262) -- (2.91414,0.00428908) -- (2.92520,0.00463321) -- (2.93626,0.00493730) -- (2.94732,0.00520356) -- (2.95837,0.00543414) -- (2.96943,0.00563113) -- (2.98049,0.00579654) -- (2.99155,0.00593232) -- (3.00261,0.00604036) -- (3.01367,0.00612247) -- (3.02473,0.00618039) -- (3.03579,0.00621583) -- (3.04685,0.00623039) -- (3.05791,0.00622564) -- (3.06897,0.00620306) -- (3.08003,0.00616410) -- (3.09109,0.00611011) -- (3.10215,0.00604242) -- (3.11321,0.00596227) -- (3.12426,0.00587085) -- (3.13532,0.00576931) -- (3.14638,0.00565872) -- (3.15744,0.00554011) -- (3.16850,0.00541445) -- (3.17956,0.00528266) -- (3.19062,0.00514562) -- (3.20168,0.00500413) -- (3.21274,0.00485897) -- (3.22380,0.00471087) -- (3.23486,0.00456051) -- (3.24592,0.00440851) -- (3.25698,0.00425548) -- (3.26804,0.00410197) -- (3.27910,0.00394848) -- (3.29015,0.00379549) -- (3.30121,0.00364344) -- (3.31227,0.00349273) -- (3.32333,0.00334372) -- (3.33439,0.00319674) -- (3.34545,0.00305211) -- (3.35651,0.00291008) -- (3.36757,0.00277091) -- (3.37863,0.00263481) -- (3.38969,0.00250197) -- (3.40075,0.00237256) -- (3.41181,0.00224672) -- (3.42287,0.00212456) -- (3.43393,0.00200620) -- (3.44499,0.00189170) -- (3.45604,0.00178113) -- (3.46710,0.00167452) -- (3.47816,0.00157192) -- (3.48922,0.00147333) -- (3.50028,0.00137874) -- (3.51134,0.00128815) -- (3.52240,0.00120152) -- (3.53346,0.00111882) -- (3.54452,0.00103999) -- (3.55558,0.000964988) -- (3.56664,0.000893739) -- (3.57770,0.000826170) -- (3.58876,0.000762201) -- (3.59982,0.000701745) -- (3.61088,0.000644710) -- (3.62194,0.000591000) -- (3.63299,0.000540515) -- (3.64405,0.000493151) -- (3.65511,0.000448802) -- (3.66617,0.000407359) -- (3.67723,0.000368712) -- (3.68829,0.000332751) -- (3.69935,0.000299363) -- (3.71041,0.000268437) -- (3.72147,0.000239861) -- (3.73253,0.000213524) -- (3.74359,0.000189316) -- (3.75465,0.000167128) -- (3.76571,0.000146853) -- (3.77677,0.000128386) -- (3.78783,0.000111623) -- (3.79888,9.64632 E-5) -- (3.80994,8.28081 E-5) -- (3.82100,7.05620 E-5) -- (3.83206,5.96317 E-5) -- (3.84312,4.99273 E-5) -- (3.85418,4.13618 E-5) -- (3.86524,3.38514 E-5) -- (3.87630,2.73154 E-5) -- (3.88736,2.16765 E-5) -- (3.89842,1.68608 E-5) -- (3.90948,1.27975 E-5) -- (3.92054,9.41950 E-6) -- (3.93160,6.66273 E-6) -- (3.94266,4.46663 E-6) -- (3.95372,2.77391 E-6) -- (3.96477,1.53054 E-6) -- (3.97583,6.85746 E-7) -- (3.98689,1.91891 E-7) -- (3.99795,4.45085 E-9) -- (4.00901,8.19284 E-8) -- (4.02007,3.85774 E-7) -- (4.03113,8.80302 E-7) -- (4.04219,1.53260 E-6) -- (4.05325,2.31242 E-6) -- (4.06431,3.19213 E-6) -- (4.07537,4.14652 E-6) -- (4.08643,5.15279 E-6) -- (4.09749,6.19040 E-6) -- (4.10855,7.24096 E-6) -- (4.11961,8.28814 E-6) -- (4.13066,9.31752 E-6) -- (4.14172,1.03166 E-5) -- (4.15278,1.12744 E-5) -- (4.16384,1.21818 E-5) -- (4.17490,1.30310 E-5) -- (4.18596,1.38158 E-5) -- (4.19702,1.45311 E-5) -- (4.20808,1.51731 E-5) -- (4.21914,1.57391 E-5) -- (4.23020,1.62273 E-5) -- (4.24126,1.66371 E-5) -- (4.25232,1.69684 E-5) -- (4.26338,1.72220 E-5) -- (4.27444,1.73994 E-5) -- (4.28550,1.75027 E-5) -- (4.29655,1.75343 E-5) -- (4.30761,1.74973 E-5) -- (4.31867,1.73950 E-5) -- (4.32973,1.72310 E-5) -- (4.34079,1.70094 E-5) -- (4.35185,1.67342 E-5) -- (4.36291,1.64097 E-5) -- (4.37397,1.60401 E-5) -- (4.38503,1.56301 E-5) -- (4.39609,1.51839 E-5) -- (4.40715,1.47059 E-5) -- (4.41821,1.42006 E-5) -- (4.42927,1.36721 E-5) -- (4.44033,1.31246 E-5) -- (4.45139,1.25621 E-5) -- (4.46245,1.19884 E-5) -- (4.47350,1.14073 E-5) -- (4.48456,1.08222 E-5) -- (4.49562,1.02365 E-5) -- (4.50668,9.65305 E-6) -- (4.51774,9.07489 E-6) -- (4.52880,8.50461 E-6) -- (4.53986,7.94462 E-6) -- (4.55092,7.39712 E-6) -- (4.56198,6.86406 E-6) -- (4.57304,6.34718 E-6) -- (4.58410,5.84801 E-6) -- (4.59516,5.36786 E-6) -- (4.60622,4.90781 E-6) -- (4.61728,4.46877 E-6) -- (4.62834,4.05145 E-6) -- (4.63939,3.65635 E-6) -- (4.65045,3.28384 E-6) -- (4.66151,2.93410 E-6) -- (4.67257,2.60715 E-6) -- (4.68363,2.30287 E-6) -- (4.69469,2.02104 E-6) -- (4.70575,1.76127 E-6) -- (4.71681,1.52309 E-6) -- (4.72787,1.30594 E-6) -- (4.73893,1.10914 E-6) -- (4.74999,9.31971 E-7) -- (4.76105,7.73617 E-7) -- (4.77211,6.33223 E-7) -- (4.78317,5.09885 E-7) -- (4.79423,4.02662 E-7) -- (4.80528,3.10587 E-7) -- (4.81634,2.32675 E-7) -- (4.82740,1.67929 E-7) -- (4.83846,1.15351 E-7) -- (4.84952,7.39469 E-8) -- (4.86058,4.27333 E-8) -- (4.87164,2.07443 E-8) -- (4.88270,7.03572 E-9) -- (4.89376,6.91131 E-10) -- (4.90482,8.25370 E-10) -- (4.91588,6.58863 E-9) -- (4.92694,1.71696 E-8) -- (4.93800,3.17983 E-8) -- (4.94906,4.97481 E-8) -- (4.96012,7.03376 E-8) -- (4.97117,9.29320 E-8) -- (4.98223,1.16944 E-7) -- (4.99329,1.41833 E-7) -- (5.00435,1.67107 E-7) -- (5.01541,1.92323 E-7) -- (5.02647,2.17083 E-7) -- (5.03753,2.41034 E-7) -- (5.04859,2.63870 E-7) -- (5.05965,2.85328 E-7) -- (5.07071,3.05185 E-7) -- (5.08177,3.23259 E-7) -- (5.09283,3.39406 E-7) -- (5.10389,3.53517 E-7) -- (5.11495,3.65516 E-7) -- (5.12601,3.75359 E-7) -- (5.13707,3.83029 E-7) -- (5.14812,3.88538 E-7) -- (5.15918,3.91917 E-7) -- (5.17024,3.93224 E-7) -- (5.18130,3.92531 E-7) -- (5.19236,3.89929 E-7) -- (5.20342,3.85522 E-7) -- (5.21448,3.79427 E-7) -- (5.22554,3.71769 E-7) -- (5.23660,3.62681 E-7) -- (5.24766,3.52304 E-7) -- (5.25872,3.40779 E-7) -- (5.26978,3.28252 E-7) -- (5.28084,3.14867 E-7) -- (5.29190,3.00769 E-7) -- (5.30296,2.86100 E-7) -- (5.31401,2.70996 E-7) -- (5.32507,2.55592 E-7) -- (5.33613,2.40014 E-7) -- (5.34719,2.24383 E-7) -- (5.35825,2.08814 E-7) -- (5.36931,1.93410 E-7) -- (5.38037,1.78270 E-7) -- (5.39143,1.63482 E-7) -- (5.40249,1.49126 E-7) -- (5.41355,1.35273 E-7) -- (5.42461,1.21983 E-7) -- (5.43567,1.09310 E-7) -- (5.44673,9.72978 E-8) -- (5.45779,8.59807 E-8) -- (5.46885,7.53857 E-8) -- (5.47990,6.55315 E-8) -- (5.49096,5.64291 E-8) -- (5.50202,4.80823 E-8) -- (5.51308,4.04882 E-8) -- (5.52414,3.36376 E-8) -- (5.53520,2.75159 E-8) -- (5.54626,2.21029 E-8) -- (5.55732,1.73743 E-8) -- (5.56838,1.33014 E-8) -- (5.57944,9.85213 E-9) -- (5.59050,6.99164 E-9) -- (5.60156,4.68256 E-9) -- (5.61262,2.88572 E-9) -- (5.62368,1.56060 E-9) -- (5.63474,6.65799 E-10) -- (5.64579,1.59528 E-10) -- (5.65685,0);
\end{tikzpicture}
\end{center}
\vskip 0.5cm
\begin{center}
\begin{tikzpicture}
\draw[black!50] (-5.65685,0) -- (5.65685,0);
\draw[black!50] (0,-2) -- (0,2);
\draw[black!50] (-0.1,2) -- (0.1,2);
\draw[black!50] (-0.1,-2) -- (0.1,-2);
\draw[black!50] (2,-0.1) -- (2,0.1);
\draw[black!50] (4,-0.1) -- (4,0.1);
\draw[black!50] (-2,-0.1) -- (-2,0.1);
\draw[black!50] (-4,-0.1) -- (-4,0.1);
\draw (-5.65685,0) -- (-5.64579,2.31528 E-5) -- (-5.63474,9.45676 E-5) -- (-5.62368,0.000216922) -- (-5.61262,0.000392522) -- (-5.60156,0.000623261) -- (-5.59050,0.000910594) -- (-5.57944,0.00125550) -- (-5.56838,0.00165845) -- (-5.55732,0.00211942) -- (-5.54626,0.00263781) -- (-5.53520,0.00321248) -- (-5.52414,0.00384175) -- (-5.51308,0.00452332) -- (-5.50202,0.00525438) -- (-5.49096,0.00603153) -- (-5.47990,0.00685083) -- (-5.46885,0.00770781) -- (-5.45779,0.00859751) -- (-5.44673,0.00951446) -- (-5.43567,0.0104528) -- (-5.42461,0.0114062) -- (-5.41355,0.0123681) -- (-5.40249,0.0133314) -- (-5.39143,0.0142891) -- (-5.38037,0.0152336) -- (-5.36931,0.0161575) -- (-5.35825,0.0170531) -- (-5.34719,0.0179129) -- (-5.33613,0.0187292) -- (-5.32507,0.0194948) -- (-5.31401,0.0202024) -- (-5.30296,0.0208452) -- (-5.29190,0.0214165) -- (-5.28084,0.0219103) -- (-5.26978,0.0223209) -- (-5.25872,0.0226434) -- (-5.24766,0.0228733) -- (-5.23660,0.0230069) -- (-5.22554,0.0230412) -- (-5.21448,0.0229742) -- (-5.20342,0.0228044) -- (-5.19236,0.0225315) -- (-5.18130,0.0221561) -- (-5.17024,0.0216796) -- (-5.15918,0.0211045) -- (-5.14812,0.0204344) -- (-5.13707,0.0196738) -- (-5.12601,0.0188281) -- (-5.11495,0.0179039) -- (-5.10389,0.0169088) -- (-5.09283,0.0158511) -- (-5.08177,0.0147402) -- (-5.07071,0.0135865) -- (-5.05965,0.0124011) -- (-5.04859,0.0111957) -- (-5.03753,0.00998305) -- (-5.02647,0.00877628) -- (-5.01541,0.00758918) -- (-5.00435,0.00643596) -- (-4.99329,0.00533120) -- (-4.98223,0.00428975) -- (-4.97117,0.00332663) -- (-4.96012,0.00245690) -- (-4.94906,0.00169556) -- (-4.93800,0.00105743) -- (-4.92694,0.000557055) -- (-4.91588,0.000208544) -- (-4.90482,2.54854 E-5) -- (-4.89376,2.08170 E-5) -- (-4.88270,0.000206709) -- (-4.87164,0.000594451) -- (-4.86058,0.00119434) -- (-4.84952,0.00201557) -- (-4.83846,0.00306615) -- (-4.82740,0.00435276) -- (-4.81634,0.00588074) -- (-4.80528,0.00765392) -- (-4.79423,0.00967461) -- (-4.78317,0.0119435) -- (-4.77211,0.0144597) -- (-4.76105,0.0172204) -- (-4.74999,0.0202214) -- (-4.73893,0.0234563) -- (-4.72787,0.0269174) -- (-4.71681,0.0305949) -- (-4.70575,0.0344773) -- (-4.69469,0.0385515) -- (-4.68363,0.0428026) -- (-4.67257,0.0472141) -- (-4.66151,0.0517679) -- (-4.65045,0.0564445) -- (-4.63939,0.0612227) -- (-4.62834,0.0660805) -- (-4.61728,0.0709941) -- (-4.60622,0.0759392) -- (-4.59516,0.0808901) -- (-4.58410,0.0858205) -- (-4.57304,0.0907036) -- (-4.56198,0.0955119) -- (-4.55092,0.100218) -- (-4.53986,0.104793) -- (-4.52880,0.109209) -- (-4.51774,0.113440) -- (-4.50668,0.117458) -- (-4.49562,0.121235) -- (-4.48456,0.124747) -- (-4.47350,0.127968) -- (-4.46245,0.130874) -- (-4.45139,0.133444) -- (-4.44033,0.135656) -- (-4.42927,0.137491) -- (-4.41821,0.138932) -- (-4.40715,0.139964) -- (-4.39609,0.140574) -- (-4.38503,0.140750) -- (-4.37397,0.140486) -- (-4.36291,0.139775) -- (-4.35185,0.138615) -- (-4.34079,0.137006) -- (-4.32973,0.134950) -- (-4.31867,0.132455) -- (-4.30761,0.129529) -- (-4.29655,0.126184) -- (-4.28550,0.122435) -- (-4.27444,0.118302) -- (-4.26338,0.113806) -- (-4.25232,0.108972) -- (-4.24126,0.103827) -- (-4.23020,0.0984023) -- (-4.21914,0.0927324) -- (-4.20808,0.0868537) -- (-4.19702,0.0808055) -- (-4.18596,0.0746301) -- (-4.17490,0.0683720) -- (-4.16384,0.0620782) -- (-4.15278,0.0557976) -- (-4.14172,0.0495813) -- (-4.13066,0.0434821) -- (-4.11961,0.0375542) -- (-4.10855,0.0318534) -- (-4.09749,0.0264364) -- (-4.08643,0.0213606) -- (-4.07537,0.0166843) -- (-4.06431,0.0124659) -- (-4.05325,0.00876384) -- (-4.04219,0.00563641) -- (-4.03113,0.00314136) -- (-4.02007,0.00133566) -- (-4.00901,0.000275192) -- (-3.99795,1.45026 E-5) -- (-3.98689,0.000606492) -- (-3.97583,0.00210215) -- (-3.96477,0.00455029) -- (-3.95372,0.00799726) -- (-3.94266,0.0124867) -- (-3.93160,0.0180593) -- (-3.92054,0.0247525) -- (-3.90948,0.0326004) -- (-3.89842,0.0416333) -- (-3.88736,0.0518777) -- (-3.87630,0.0633561) -- (-3.86524,0.0760866) -- (-3.85418,0.0900831) -- (-3.84312,0.105355) -- (-3.83206,0.121906) -- (-3.82100,0.139737) -- (-3.80994,0.158843) -- (-3.79888,0.179213) -- (-3.78783,0.200832) -- (-3.77677,0.223681) -- (-3.76571,0.247734) -- (-3.75465,0.272961) -- (-3.74359,0.299327) -- (-3.73253,0.326792) -- (-3.72147,0.355310) -- (-3.71041,0.384833) -- (-3.69935,0.415306) -- (-3.68829,0.446669) -- (-3.67723,0.478860) -- (-3.66617,0.511810) -- (-3.65511,0.545448) -- (-3.64405,0.579699) -- (-3.63299,0.614483) -- (-3.62194,0.649718) -- (-3.61088,0.685318) -- (-3.59982,0.721196) -- (-3.58876,0.757258) -- (-3.57770,0.793413) -- (-3.56664,0.829565) -- (-3.55558,0.865617) -- (-3.54452,0.901470) -- (-3.53346,0.937024) -- (-3.52240,0.972180) -- (-3.51134,1.00684) -- (-3.50028,1.04089) -- (-3.48922,1.07425) -- (-3.47816,1.10680) -- (-3.46710,1.13846) -- (-3.45604,1.16912) -- (-3.44499,1.19868) -- (-3.43393,1.22706) -- (-3.42287,1.25415) -- (-3.41181,1.27988) -- (-3.40075,1.30415) -- (-3.38969,1.32688) -- (-3.37863,1.34799) -- (-3.36757,1.36740) -- (-3.35651,1.38504) -- (-3.34545,1.40085) -- (-3.33439,1.41475) -- (-3.32333,1.42670) -- (-3.31227,1.43662) -- (-3.30121,1.44448) -- (-3.29015,1.45023) -- (-3.27910,1.45383) -- (-3.26804,1.45525) -- (-3.25698,1.45445) -- (-3.24592,1.45142) -- (-3.23486,1.44614) -- (-3.22380,1.43859) -- (-3.21274,1.42878) -- (-3.20168,1.41669) -- (-3.19062,1.40234) -- (-3.17956,1.38574) -- (-3.16850,1.36691) -- (-3.15744,1.34586) -- (-3.14638,1.32262) -- (-3.13532,1.29724) -- (-3.12426,1.26975) -- (-3.11321,1.24019) -- (-3.10215,1.20861) -- (-3.09109,1.17507) -- (-3.08003,1.13962) -- (-3.06897,1.10233) -- (-3.05791,1.06327) -- (-3.04685,1.02251) -- (-3.03579,0.980117) -- (-3.02473,0.936181) -- (-3.01367,0.890781) -- (-3.00261,0.844003) -- (-2.99155,0.795935) -- (-2.98049,0.746669) -- (-2.96943,0.696298) -- (-2.95837,0.644919) -- (-2.94732,0.592629) -- (-2.93626,0.539526) -- (-2.92520,0.485711) -- (-2.91414,0.431285) -- (-2.90308,0.376350) -- (-2.89202,0.321008) -- (-2.88096,0.265361) -- (-2.86990,0.209512) -- (-2.85884,0.153562) -- (-2.84778,0.0976123) -- (-2.83672,0.0417624) -- (-2.82566,-0.0138888) -- (-2.81460,-0.0692441) -- (-2.80354,-0.124208) -- (-2.79248,-0.178687) -- (-2.78142,-0.232589) -- (-2.77037,-0.285826) -- (-2.75931,-0.338311) -- (-2.74825,-0.389961) -- (-2.73719,-0.440694) -- (-2.72613,-0.490433) -- (-2.71507,-0.539104) -- (-2.70401,-0.586636) -- (-2.69295,-0.632960) -- (-2.68189,-0.678015) -- (-2.67083,-0.721738) -- (-2.65977,-0.764075) -- (-2.64871,-0.804973) -- (-2.63765,-0.844384) -- (-2.62659,-0.882263) -- (-2.61553,-0.918570) -- (-2.60448,-0.953269) -- (-2.59342,-0.986328) -- (-2.58236,-1.01772) -- (-2.57130,-1.04742) -- (-2.56024,-1.07541) -- (-2.54918,-1.10167) -- (-2.53812,-1.12619) -- (-2.52706,-1.14897) -- (-2.51600,-1.16999) -- (-2.50494,-1.18927) -- (-2.49388,-1.20680) -- (-2.48282,-1.22259) -- (-2.47176,-1.23665) -- (-2.46070,-1.24900) -- (-2.44964,-1.25965) -- (-2.43859,-1.26862) -- (-2.42753,-1.27595) -- (-2.41647,-1.28164) -- (-2.40541,-1.28574) -- (-2.39435,-1.28828) -- (-2.38329,-1.28929) -- (-2.37223,-1.28880) -- (-2.36117,-1.28686) -- (-2.35011,-1.28351) -- (-2.33905,-1.27879) -- (-2.32799,-1.27275) -- (-2.31693,-1.26542) -- (-2.30587,-1.25687) -- (-2.29481,-1.24713) -- (-2.28375,-1.23626) -- (-2.27270,-1.22430) -- (-2.26164,-1.21132) -- (-2.25058,-1.19736) -- (-2.23952,-1.18247) -- (-2.22846,-1.16671) -- (-2.21740,-1.15013) -- (-2.20634,-1.13278) -- (-2.19528,-1.11472) -- (-2.18422,-1.09600) -- (-2.17316,-1.07667) -- (-2.16210,-1.05679) -- (-2.15104,-1.03640) -- (-2.13998,-1.01555) -- (-2.12892,-0.994302) -- (-2.11786,-0.972697) -- (-2.10680,-0.950785) -- (-2.09575,-0.928612) -- (-2.08469,-0.906225) -- (-2.07363,-0.883667) -- (-2.06257,-0.860983) -- (-2.05151,-0.838214) -- (-2.04045,-0.815400) -- (-2.02939,-0.792581) -- (-2.01833,-0.769794) -- (-2.00727,-0.747075) -- (-1.99621,-0.724459) -- (-1.98515,-0.701979) -- (-1.97409,-0.679665) -- (-1.96303,-0.657548) -- (-1.95197,-0.635655) -- (-1.94091,-0.614013) -- (-1.92986,-0.592646) -- (-1.91880,-0.571578) -- (-1.90774,-0.550829) -- (-1.89668,-0.530421) -- (-1.88562,-0.510371) -- (-1.87456,-0.490695) -- (-1.86350,-0.471409) -- (-1.85244,-0.452526) -- (-1.84138,-0.434059) -- (-1.83032,-0.416018) -- (-1.81926,-0.398412) -- (-1.80820,-0.381250) -- (-1.79714,-0.364538) -- (-1.78608,-0.348281) -- (-1.77502,-0.332483) -- (-1.76397,-0.317147) -- (-1.75291,-0.302275) -- (-1.74185,-0.287867) -- (-1.73079,-0.273923) -- (-1.71973,-0.260442) -- (-1.70867,-0.247421) -- (-1.69761,-0.234858) -- (-1.68655,-0.222747) -- (-1.67549,-0.211084) -- (-1.66443,-0.199865) -- (-1.65337,-0.189082) -- (-1.64231,-0.178729) -- (-1.63125,-0.168798) -- (-1.62019,-0.159282) -- (-1.60913,-0.150171) -- (-1.59808,-0.141459) -- (-1.58702,-0.133134) -- (-1.57596,-0.125188) -- (-1.56490,-0.117612) -- (-1.55384,-0.110394) -- (-1.54278,-0.103525) -- (-1.53172,-0.0969950) -- (-1.52066,-0.0907926) -- (-1.50960,-0.0849076) -- (-1.49854,-0.0793294) -- (-1.48748,-0.0740473) -- (-1.47642,-0.0690507) -- (-1.46536,-0.0643290) -- (-1.45430,-0.0598715) -- (-1.44324,-0.0556679) -- (-1.43218,-0.0517079) -- (-1.42113,-0.0479812) -- (-1.41007,-0.0444777) -- (-1.39901,-0.0411876) -- (-1.38795,-0.0381011) -- (-1.37689,-0.0352089) -- (-1.36583,-0.0325016) -- (-1.35477,-0.0299701) -- (-1.34371,-0.0276056) -- (-1.33265,-0.0253997) -- (-1.32159,-0.0233439) -- (-1.31053,-0.0214303) -- (-1.29947,-0.0196510) -- (-1.28841,-0.0179985) -- (-1.27735,-0.0164656) -- (-1.26629,-0.0150453) -- (-1.25524,-0.0137310) -- (-1.24418,-0.0125161) -- (-1.23312,-0.0113946) -- (-1.22206,-0.0103605) -- (-1.21100,-0.00940819) -- (-1.19994,-0.00853237) -- (-1.18888,-0.00772790) -- (-1.17782,-0.00698995) -- (-1.16676,-0.00631389) -- (-1.15570,-0.00569538) -- (-1.14464,-0.00513027) -- (-1.13358,-0.00461467) -- (-1.12252,-0.00414489) -- (-1.11146,-0.00371748) -- (-1.10040,-0.00332916) -- (-1.08935,-0.00297689) -- (-1.07829,-0.00265778) -- (-1.06723,-0.00236915) -- (-1.05617,-0.00210849) -- (-1.04511,-0.00187345) -- (-1.03405,-0.00166185) -- (-1.02299,-0.00147167) -- (-1.01193,-0.00130101) -- (-1.00087,-0.00114812) -- (-0.989811,-0.00101139) -- (-0.978752,-0.000889319) -- (-0.967693,-0.000780528) -- (-0.956633,-0.000683747) -- (-0.945574,-0.000597807) -- (-0.934515,-0.000521636) -- (-0.923455,-0.000454252) -- (-0.912396,-0.000394757) -- (-0.901337,-0.000342332) -- (-0.890277,-0.000296230) -- (-0.879218,-0.000255772) -- (-0.868158,-0.000220343) -- (-0.857099,-0.000189383) -- (-0.846040,-0.000162390) -- (-0.834980,-0.000138908) -- (-0.823921,-0.000118527) -- (-0.812862,-0.000100880) -- (-0.801802,-8.56375 E-5) -- (-0.790743,-7.25041 E-5) -- (-0.779684,-6.12169 E-5) -- (-0.768624,-5.15418 E-5) -- (-0.757565,-4.32707 E-5) -- (-0.746506,-3.62192 E-5) -- (-0.735446,-3.02246 E-5) -- (-0.724387,-2.51432 E-5) -- (-0.713328,-2.08487 E-5) -- (-0.702268,-1.72304 E-5) -- (-0.691209,-1.41915 E-5) -- (-0.680150,-1.16474 E-5) -- (-0.669090,-9.52476 E-6) -- (-0.658031,-7.75980 E-6) -- (-0.646972,-6.29749 E-6) -- (-0.635912,-5.09038 E-6) -- (-0.624853,-4.09770 E-6) -- (-0.613794,-3.28456 E-6) -- (-0.602734,-2.62117 E-6) -- (-0.591675,-2.08223 E-6) -- (-0.580616,-1.64628 E-6) -- (-0.569556,-1.29523 E-6) -- (-0.558497,-1.01386 E-6) -- (-0.547438,-7.89416 E-7) -- (-0.536378,-6.11289 E-7) -- (-0.525319,-4.70655 E-7) -- (-0.514259,-3.60222 E-7) -- (-0.503200,-2.73995 E-7) -- (-0.492141,-2.07063 E-7) -- (-0.481081,-1.55426 E-7) -- (-0.470022,-1.15845 E-7) -- (-0.458963,-8.57080 E-8) -- (-0.447903,-6.29217 E-8) -- (-0.436844,-4.58197 E-8) -- (-0.425785,-3.30825 E-8) -- (-0.414725,-2.36728 E-8) -- (-0.403666,-1.67804 E-8) -- (-0.392607,-1.17768 E-8) -- (-0.381547,-8.17878 E-9) -- (-0.370488,-5.61718 E-9) -- (-0.359429,-3.81268 E-9) -- (-0.348369,-2.55570 E-9) -- (-0.337310,-1.69049 E-9) -- (-0.326251,-1.10244 E-9) -- (-0.315191,-7.08150 E-10) -- (-0.304132,-4.47562 E-10) -- (-0.293073,-2.77988 E-10) -- (-0.282013,-1.69461 E-10) -- (-0.270954,-1.01237 E-10) -- (-0.259895,0) -- (-0.248835,0) -- (-0.237776,0) -- (-0.226717,0) -- (-0.215657,0) -- (-0.204598,0) -- (-0.193539,0) -- (-0.182479,0) -- (-0.171420,0) -- (-0.160360,0) -- (-0.149301,0) -- (-0.138242,0) -- (-0.127182,0) -- (-0.116123,0) -- (-0.105064,0) -- (-0.0940044,0) -- (-0.0829451,0) -- (-0.0718857,0) -- (-0.0608264,0) -- (-0.0497670,0) -- (-0.0387077,0) -- (-0.0276484,0) -- (-0.0165890,0) -- (-0.00552967,0) -- (0.00552967,0) -- (0.0165890,0) -- (0.0276484,0) -- (0.0387077,0) -- (0.0497670,0) -- (0.0608264,0) -- (0.0718857,0) -- (0.0829451,0) -- (0.0940044,0) -- (0.105064,0) -- (0.116123,0) -- (0.127182,0) -- (0.138242,0) -- (0.149301,0) -- (0.160360,0) -- (0.171420,0) -- (0.182479,0) -- (0.193539,0) -- (0.204598,0) -- (0.215657,0) -- (0.226717,0) -- (0.237776,0) -- (0.248835,0) -- (0.259895,0) -- (0.270954,-1.01237 E-10) -- (0.282013,-1.69461 E-10) -- (0.293073,-2.77988 E-10) -- (0.304132,-4.47562 E-10) -- (0.315191,-7.08150 E-10) -- (0.326251,-1.10244 E-9) -- (0.337310,-1.69049 E-9) -- (0.348369,-2.55570 E-9) -- (0.359429,-3.81268 E-9) -- (0.370488,-5.61718 E-9) -- (0.381547,-8.17878 E-9) -- (0.392607,-1.17768 E-8) -- (0.403666,-1.67804 E-8) -- (0.414725,-2.36728 E-8) -- (0.425785,-3.30825 E-8) -- (0.436844,-4.58197 E-8) -- (0.447903,-6.29217 E-8) -- (0.458963,-8.57080 E-8) -- (0.470022,-1.15845 E-7) -- (0.481081,-1.55426 E-7) -- (0.492141,-2.07063 E-7) -- (0.503200,-2.73995 E-7) -- (0.514259,-3.60222 E-7) -- (0.525319,-4.70655 E-7) -- (0.536378,-6.11289 E-7) -- (0.547438,-7.89416 E-7) -- (0.558497,-1.01386 E-6) -- (0.569556,-1.29523 E-6) -- (0.580616,-1.64628 E-6) -- (0.591675,-2.08223 E-6) -- (0.602734,-2.62117 E-6) -- (0.613794,-3.28456 E-6) -- (0.624853,-4.09770 E-6) -- (0.635912,-5.09038 E-6) -- (0.646972,-6.29749 E-6) -- (0.658031,-7.75980 E-6) -- (0.669090,-9.52476 E-6) -- (0.680150,-1.16474 E-5) -- (0.691209,-1.41915 E-5) -- (0.702268,-1.72304 E-5) -- (0.713328,-2.08487 E-5) -- (0.724387,-2.51432 E-5) -- (0.735446,-3.02246 E-5) -- (0.746506,-3.62192 E-5) -- (0.757565,-4.32707 E-5) -- (0.768624,-5.15418 E-5) -- (0.779684,-6.12169 E-5) -- (0.790743,-7.25041 E-5) -- (0.801802,-8.56375 E-5) -- (0.812862,-0.000100880) -- (0.823921,-0.000118527) -- (0.834980,-0.000138908) -- (0.846040,-0.000162390) -- (0.857099,-0.000189383) -- (0.868158,-0.000220343) -- (0.879218,-0.000255772) -- (0.890277,-0.000296230) -- (0.901337,-0.000342332) -- (0.912396,-0.000394757) -- (0.923455,-0.000454252) -- (0.934515,-0.000521636) -- (0.945574,-0.000597807) -- (0.956633,-0.000683747) -- (0.967693,-0.000780528) -- (0.978752,-0.000889319) -- (0.989811,-0.00101139) -- (1.00087,-0.00114812) -- (1.01193,-0.00130101) -- (1.02299,-0.00147167) -- (1.03405,-0.00166185) -- (1.04511,-0.00187345) -- (1.05617,-0.00210849) -- (1.06723,-0.00236915) -- (1.07829,-0.00265778) -- (1.08935,-0.00297689) -- (1.10040,-0.00332916) -- (1.11146,-0.00371748) -- (1.12252,-0.00414489) -- (1.13358,-0.00461467) -- (1.14464,-0.00513027) -- (1.15570,-0.00569538) -- (1.16676,-0.00631389) -- (1.17782,-0.00698995) -- (1.18888,-0.00772790) -- (1.19994,-0.00853237) -- (1.21100,-0.00940819) -- (1.22206,-0.0103605) -- (1.23312,-0.0113946) -- (1.24418,-0.0125161) -- (1.25524,-0.0137310) -- (1.26629,-0.0150453) -- (1.27735,-0.0164656) -- (1.28841,-0.0179985) -- (1.29947,-0.0196510) -- (1.31053,-0.0214303) -- (1.32159,-0.0233439) -- (1.33265,-0.0253997) -- (1.34371,-0.0276056) -- (1.35477,-0.0299701) -- (1.36583,-0.0325016) -- (1.37689,-0.0352089) -- (1.38795,-0.0381011) -- (1.39901,-0.0411876) -- (1.41007,-0.0444777) -- (1.42113,-0.0479812) -- (1.43218,-0.0517079) -- (1.44324,-0.0556679) -- (1.45430,-0.0598715) -- (1.46536,-0.0643290) -- (1.47642,-0.0690507) -- (1.48748,-0.0740473) -- (1.49854,-0.0793294) -- (1.50960,-0.0849076) -- (1.52066,-0.0907926) -- (1.53172,-0.0969950) -- (1.54278,-0.103525) -- (1.55384,-0.110394) -- (1.56490,-0.117612) -- (1.57596,-0.125188) -- (1.58702,-0.133134) -- (1.59808,-0.141459) -- (1.60913,-0.150171) -- (1.62019,-0.159282) -- (1.63125,-0.168798) -- (1.64231,-0.178729) -- (1.65337,-0.189082) -- (1.66443,-0.199865) -- (1.67549,-0.211084) -- (1.68655,-0.222747) -- (1.69761,-0.234858) -- (1.70867,-0.247421) -- (1.71973,-0.260442) -- (1.73079,-0.273923) -- (1.74185,-0.287867) -- (1.75291,-0.302275) -- (1.76397,-0.317147) -- (1.77502,-0.332483) -- (1.78608,-0.348281) -- (1.79714,-0.364538) -- (1.80820,-0.381250) -- (1.81926,-0.398412) -- (1.83032,-0.416018) -- (1.84138,-0.434059) -- (1.85244,-0.452526) -- (1.86350,-0.471409) -- (1.87456,-0.490695) -- (1.88562,-0.510371) -- (1.89668,-0.530421) -- (1.90774,-0.550829) -- (1.91880,-0.571578) -- (1.92986,-0.592646) -- (1.94091,-0.614013) -- (1.95197,-0.635655) -- (1.96303,-0.657548) -- (1.97409,-0.679665) -- (1.98515,-0.701979) -- (1.99621,-0.724459) -- (2.00727,-0.747075) -- (2.01833,-0.769794) -- (2.02939,-0.792581) -- (2.04045,-0.815400) -- (2.05151,-0.838214) -- (2.06257,-0.860983) -- (2.07363,-0.883667) -- (2.08469,-0.906225) -- (2.09575,-0.928612) -- (2.10680,-0.950785) -- (2.11786,-0.972697) -- (2.12892,-0.994302) -- (2.13998,-1.01555) -- (2.15104,-1.03640) -- (2.16210,-1.05679) -- (2.17316,-1.07667) -- (2.18422,-1.09600) -- (2.19528,-1.11472) -- (2.20634,-1.13278) -- (2.21740,-1.15013) -- (2.22846,-1.16671) -- (2.23952,-1.18247) -- (2.25058,-1.19736) -- (2.26164,-1.21132) -- (2.27270,-1.22430) -- (2.28375,-1.23626) -- (2.29481,-1.24713) -- (2.30587,-1.25687) -- (2.31693,-1.26542) -- (2.32799,-1.27275) -- (2.33905,-1.27879) -- (2.35011,-1.28351) -- (2.36117,-1.28686) -- (2.37223,-1.28880) -- (2.38329,-1.28929) -- (2.39435,-1.28828) -- (2.40541,-1.28574) -- (2.41647,-1.28164) -- (2.42753,-1.27595) -- (2.43859,-1.26862) -- (2.44964,-1.25965) -- (2.46070,-1.24900) -- (2.47176,-1.23665) -- (2.48282,-1.22259) -- (2.49388,-1.20680) -- (2.50494,-1.18927) -- (2.51600,-1.16999) -- (2.52706,-1.14897) -- (2.53812,-1.12619) -- (2.54918,-1.10167) -- (2.56024,-1.07541) -- (2.57130,-1.04742) -- (2.58236,-1.01772) -- (2.59342,-0.986328) -- (2.60448,-0.953269) -- (2.61553,-0.918570) -- (2.62659,-0.882263) -- (2.63765,-0.844384) -- (2.64871,-0.804973) -- (2.65977,-0.764075) -- (2.67083,-0.721738) -- (2.68189,-0.678015) -- (2.69295,-0.632960) -- (2.70401,-0.586636) -- (2.71507,-0.539104) -- (2.72613,-0.490433) -- (2.73719,-0.440694) -- (2.74825,-0.389961) -- (2.75931,-0.338311) -- (2.77037,-0.285826) -- (2.78142,-0.232589) -- (2.79248,-0.178687) -- (2.80354,-0.124208) -- (2.81460,-0.0692441) -- (2.82566,-0.0138888) -- (2.83672,0.0417624) -- (2.84778,0.0976123) -- (2.85884,0.153562) -- (2.86990,0.209512) -- (2.88096,0.265361) -- (2.89202,0.321008) -- (2.90308,0.376350) -- (2.91414,0.431285) -- (2.92520,0.485711) -- (2.93626,0.539526) -- (2.94732,0.592629) -- (2.95837,0.644919) -- (2.96943,0.696298) -- (2.98049,0.746669) -- (2.99155,0.795935) -- (3.00261,0.844003) -- (3.01367,0.890781) -- (3.02473,0.936181) -- (3.03579,0.980117) -- (3.04685,1.02251) -- (3.05791,1.06327) -- (3.06897,1.10233) -- (3.08003,1.13962) -- (3.09109,1.17507) -- (3.10215,1.20861) -- (3.11321,1.24019) -- (3.12426,1.26975) -- (3.13532,1.29724) -- (3.14638,1.32262) -- (3.15744,1.34586) -- (3.16850,1.36691) -- (3.17956,1.38574) -- (3.19062,1.40234) -- (3.20168,1.41669) -- (3.21274,1.42878) -- (3.22380,1.43859) -- (3.23486,1.44614) -- (3.24592,1.45142) -- (3.25698,1.45445) -- (3.26804,1.45525) -- (3.27910,1.45383) -- (3.29015,1.45023) -- (3.30121,1.44448) -- (3.31227,1.43662) -- (3.32333,1.42670) -- (3.33439,1.41475) -- (3.34545,1.40085) -- (3.35651,1.38504) -- (3.36757,1.36740) -- (3.37863,1.34799) -- (3.38969,1.32688) -- (3.40075,1.30415) -- (3.41181,1.27988) -- (3.42287,1.25415) -- (3.43393,1.22706) -- (3.44499,1.19868) -- (3.45604,1.16912) -- (3.46710,1.13846) -- (3.47816,1.10680) -- (3.48922,1.07425) -- (3.50028,1.04089) -- (3.51134,1.00684) -- (3.52240,0.972180) -- (3.53346,0.937024) -- (3.54452,0.901470) -- (3.55558,0.865617) -- (3.56664,0.829565) -- (3.57770,0.793413) -- (3.58876,0.757258) -- (3.59982,0.721196) -- (3.61088,0.685318) -- (3.62194,0.649718) -- (3.63299,0.614483) -- (3.64405,0.579699) -- (3.65511,0.545448) -- (3.66617,0.511810) -- (3.67723,0.478860) -- (3.68829,0.446669) -- (3.69935,0.415306) -- (3.71041,0.384833) -- (3.72147,0.355310) -- (3.73253,0.326792) -- (3.74359,0.299327) -- (3.75465,0.272961) -- (3.76571,0.247734) -- (3.77677,0.223681) -- (3.78783,0.200832) -- (3.79888,0.179213) -- (3.80994,0.158843) -- (3.82100,0.139737) -- (3.83206,0.121906) -- (3.84312,0.105355) -- (3.85418,0.0900831) -- (3.86524,0.0760866) -- (3.87630,0.0633561) -- (3.88736,0.0518777) -- (3.89842,0.0416333) -- (3.90948,0.0326004) -- (3.92054,0.0247525) -- (3.93160,0.0180593) -- (3.94266,0.0124867) -- (3.95372,0.00799726) -- (3.96477,0.00455029) -- (3.97583,0.00210215) -- (3.98689,0.000606492) -- (3.99795,1.45026 E-5) -- (4.00901,0.000275192) -- (4.02007,0.00133566) -- (4.03113,0.00314136) -- (4.04219,0.00563641) -- (4.05325,0.00876384) -- (4.06431,0.0124659) -- (4.07537,0.0166843) -- (4.08643,0.0213606) -- (4.09749,0.0264364) -- (4.10855,0.0318534) -- (4.11961,0.0375542) -- (4.13066,0.0434821) -- (4.14172,0.0495813) -- (4.15278,0.0557976) -- (4.16384,0.0620782) -- (4.17490,0.0683720) -- (4.18596,0.0746301) -- (4.19702,0.0808055) -- (4.20808,0.0868537) -- (4.21914,0.0927324) -- (4.23020,0.0984023) -- (4.24126,0.103827) -- (4.25232,0.108972) -- (4.26338,0.113806) -- (4.27444,0.118302) -- (4.28550,0.122435) -- (4.29655,0.126184) -- (4.30761,0.129529) -- (4.31867,0.132455) -- (4.32973,0.134950) -- (4.34079,0.137006) -- (4.35185,0.138615) -- (4.36291,0.139775) -- (4.37397,0.140486) -- (4.38503,0.140750) -- (4.39609,0.140574) -- (4.40715,0.139964) -- (4.41821,0.138932) -- (4.42927,0.137491) -- (4.44033,0.135656) -- (4.45139,0.133444) -- (4.46245,0.130874) -- (4.47350,0.127968) -- (4.48456,0.124747) -- (4.49562,0.121235) -- (4.50668,0.117458) -- (4.51774,0.113440) -- (4.52880,0.109209) -- (4.53986,0.104793) -- (4.55092,0.100218) -- (4.56198,0.0955119) -- (4.57304,0.0907036) -- (4.58410,0.0858205) -- (4.59516,0.0808901) -- (4.60622,0.0759392) -- (4.61728,0.0709941) -- (4.62834,0.0660805) -- (4.63939,0.0612227) -- (4.65045,0.0564445) -- (4.66151,0.0517679) -- (4.67257,0.0472141) -- (4.68363,0.0428026) -- (4.69469,0.0385515) -- (4.70575,0.0344773) -- (4.71681,0.0305949) -- (4.72787,0.0269174) -- (4.73893,0.0234563) -- (4.74999,0.0202214) -- (4.76105,0.0172204) -- (4.77211,0.0144597) -- (4.78317,0.0119435) -- (4.79423,0.00967461) -- (4.80528,0.00765392) -- (4.81634,0.00588074) -- (4.82740,0.00435276) -- (4.83846,0.00306615) -- (4.84952,0.00201557) -- (4.86058,0.00119434) -- (4.87164,0.000594451) -- (4.88270,0.000206709) -- (4.89376,2.08170 E-5) -- (4.90482,2.54854 E-5) -- (4.91588,0.000208544) -- (4.92694,0.000557055) -- (4.93800,0.00105743) -- (4.94906,0.00169556) -- (4.96012,0.00245690) -- (4.97117,0.00332663) -- (4.98223,0.00428975) -- (4.99329,0.00533120) -- (5.00435,0.00643596) -- (5.01541,0.00758918) -- (5.02647,0.00877628) -- (5.03753,0.00998305) -- (5.04859,0.0111957) -- (5.05965,0.0124011) -- (5.07071,0.0135865) -- (5.08177,0.0147402) -- (5.09283,0.0158511) -- (5.10389,0.0169088) -- (5.11495,0.0179039) -- (5.12601,0.0188281) -- (5.13707,0.0196738) -- (5.14812,0.0204344) -- (5.15918,0.0211045) -- (5.17024,0.0216796) -- (5.18130,0.0221561) -- (5.19236,0.0225315) -- (5.20342,0.0228044) -- (5.21448,0.0229742) -- (5.22554,0.0230412) -- (5.23660,0.0230069) -- (5.24766,0.0228733) -- (5.25872,0.0226434) -- (5.26978,0.0223209) -- (5.28084,0.0219103) -- (5.29190,0.0214165) -- (5.30296,0.0208452) -- (5.31401,0.0202024) -- (5.32507,0.0194948) -- (5.33613,0.0187292) -- (5.34719,0.0179129) -- (5.35825,0.0170531) -- (5.36931,0.0161575) -- (5.38037,0.0152336) -- (5.39143,0.0142891) -- (5.40249,0.0133314) -- (5.41355,0.0123681) -- (5.42461,0.0114062) -- (5.43567,0.0104528) -- (5.44673,0.00951446) -- (5.45779,0.00859751) -- (5.46885,0.00770781) -- (5.47990,0.00685083) -- (5.49096,0.00603153) -- (5.50202,0.00525438) -- (5.51308,0.00452332) -- (5.52414,0.00384175) -- (5.53520,0.00321248) -- (5.54626,0.00263781) -- (5.55732,0.00211942) -- (5.56838,0.00165845) -- (5.57944,0.00125550) -- (5.59050,0.000910594) -- (5.60156,0.000623261) -- (5.61262,0.000392522) -- (5.62368,0.000216922) -- (5.63474,9.45676 E-5) -- (5.64579,2.31528 E-5) -- (5.65685,0);
\end{tikzpicture}
\end{center}
\caption{Two plots of the function $f$ from Theorem~\ref{thm_dimension12}.
The upper image is a cross section of the graph of $x \mapsto f(x)$ for $|x|^2 \le 8$;
note that this function decreases rapidly enough that the double roots
are nearly invisible.  The function in the lower image is instead proportional to
$x \mapsto |x|^{11} f(x)$.  This transformation distorts
the picture but clarifies the behavior, because $|x|^{11}$ is proportional to
the surface area of a sphere of radius $|x|$ in $\R^{12}$; thus, one-dimensional integrals of
the plotted function are proportional to integrals of $f$ in $\R^{12}$.}
\label{figure:dim12}
\end{figure}

The proof of Theorem~\ref{thm_dimension12} makes use of modular forms. The
lower bound $\bA_+(12) \ge \sqrt{2}$ follows from the existence of the
Eisenstein series $E_6$, while the upper bound $\bA_+(12) \le \sqrt{2}$ is
based on Viazovska's methods, which were developed to solve the sphere
packing problem in eight dimensions \cite{V} and twenty-four dimensions
\cite{CKMRV} (see also \cite{C2} for an exposition).  We prove both bounds
for $\bA_+(12)$ in Section~\ref{section:dimension12}.

The close relationship of this uncertainty principle with sphere packing may
seem surprising, given that Problem~\ref{+1problem} makes no reference to any
discrete structures. The connection is through the Euclidean linear
programming bound of Cohn and Elkies \cite{CE}, which converts a suitable
auxiliary function $f$ into an upper bound for the sphere packing density
$\Delta_d$ in $\R^d$. Suppose $f \colon \R^d \to \R$ is an integrable
function such that $\ft f$ is also integrable and real-valued (i.e., $f$ is
even), $f(0) = \ft f(0) = 1$, $\ft f \ge 0$ everywhere, and $f$ is eventually
nonpositive. Then the linear programming bound obtained from $f$ is the upper
bound
\begin{equation} \label{sphere_packing_bound}
\Delta_d \leq \vol\mathopen{}\big(B^d_{r(f)/2}\big)\mathclose{},
\end{equation}
where $B^d_R$ is the closed ball of radius $R$ about the origin in $\R^d$.
(Strictly speaking, the proof in \cite{CE} requires additional decay
hypotheses on $f$ and $\ft f$; see \cite[Theorem~3.3]{CZ} for a proof in the
generality of our statement here.) Optimizing this bound amounts to
minimizing $r(f)$.

Based on numerical evidence and analogies with other problems in coding
theory, Cohn and Elkies conjectured the existence of functions $f$ achieving
equality in \eqref{sphere_packing_bound} when $d \in \{2,8,24\}$, and they
proved it when $d=1$.  The case $d=2$ remains an open problem today, despite
the existence of elementary solutions of the two-dimensional sphere packing
problem by other means (see, for example, \cite{H}).  However, the case $d=8$
was proved fourteen years later in a breakthrough by Viazovska \cite{V}, and
the case $d=24$ was proved shortly thereafter based on her approach
\cite{CKMRV}. These papers solved the sphere packing problem in dimensions
$8$ and $24$.

The problem of optimizing the linear programming bound for $\Delta_d$ already
appears somewhat similar to Problem~\ref{+1problem}, but there is a deeper
analogy based on a problem studied by Cohn and Elkies in
\cite[Section~7]{CE}. Given an auxiliary function $f$ for the sphere packing
bound, let $g = \ft f - f$. Note that $g$ is not identically zero, because
otherwise $f$ and $\ft f$ would both have compact support (thanks to their
opposite signs outside radius $r(f)$), which would imply that $f = \ft f =
0$. Then $g$ satisfies the conditions of the following problem, with $r(g)
\le r(f)$:

\begin{problem}[$-1$ eigenfunction uncertainty principle]\label{-1problem}
Minimize $r(g)$ over all $g \colon \R^d \to \R$ such that
\begin{enumerate}
\item $g\in L^1(\R^d)\setminus\{0\}$ and $\ft g = -g$, and
\item $g(0)=0$ and $g$ is eventually nonnegative.
\end{enumerate}
\end{problem}

This problem has been solved for $d \in \{1, 8, 24\}$, as a consequence of
the sphere packing bounds mentioned above; the answers are $1$, $\sqrt{2}$,
and $2$, respectively. When $d=2$, it is conjectured that the optimal value
of $r(g)$ is $(4/3)^{1/4}$, but no proof is known.  No other closed forms
have been identified.

Cohn and Elkies conjectured \cite[Conjecture~7.2]{CE} that the minimal value
of $r(g)$ in Problem~\ref{-1problem} is exactly the same as that of $r(f)$ in
the linear programming bound, and that in fact an auxiliary function $f$ for
the linear programming bound can always be reconstructed from an optimal $g$
via $g = \ft f - f$. Nobody has proved that such an $f$ always exists, but
numerical evidence strongly supports this conjecture.

We can extend Problem~\ref{-1problem} to a broader uncertainty principle as
follows. Let $\A_-(d)$ denote the set of functions $f\colon\R^d\to \R$ such
that
\begin{enumerate}
\item $f\in L^1(\R^d)$, $\ft f\in L^1(\R^d)$, and $\ft f$ is real-valued
    (i.e., $f$ is even),
\item $f$ is eventually nonnegative while $\ft f(0)\leq 0$, and
\item $\ft f$ is eventually nonpositive while $f(0)\geq 0$.
\end{enumerate}
Let
\[
\bA_-(d) =  \inf_{f\in \A_-(d)\setminus\{0\}} \sqrt{r(f)r(\ft f\,)},
\]
and note that every function $g$ in Problem~\ref{-1problem} satisfies $r(g)
\ge \bA_-(d)$.

For completeness, we state our next theorem for both $\pm1$ cases, although
all the results in the following theorem were already proved for the $+1$
case by Gon\c calves, Oliveira e Silva, and Steinerberger in \cite{GOS}. Note
that we regard $\bA_{+1}$ and $\bA_{-1}$ as synonymous with $\bA_+$ and
$\bA_-$, respectively.

\begin{theorem}\label{thm_facts_+-1}
Let $s\in \{\pm 1\}$. Then there exist positive constants $c$ and $C$ such
that
\[
c \leq \frac{\bA_s(d)}{\sqrt{d}} \leq C
\]
for all $d$. Moreover, for each $d$ there exists a radial function $f\in
\A_s(d)\setminus\{0\}$ with $\ft f = s f$, $f(0)=0$, and
\[
r(f) = \bA_s(d).
\]
Furthermore, any such function must vanish at infinitely many radii greater
than $\bA_s(d)$.
\end{theorem}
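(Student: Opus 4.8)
Since the $s=+1$ statements are precisely \cite[Theorem~3]{GOS}, the plan is to establish the $s=-1$ case along the same lines, in four stages. \emph{Reduction to radial eigenfunctions.} Given $f\in\A_s(d)\setminus\{0\}$, I would first replace $f$ by its average over $\mathrm{O}(d)$: this preserves $\ft f$ and all of the defining conditions, does not increase $r(f)$ or $r(\ft f\,)$, and is not identically zero (otherwise $f$ and $\ft f$ would both vanish beyond the radii $r(f)$ and $r(\ft f\,)$, hence be compactly supported, forcing $f\equiv0$). After also rescaling $x\mapsto\lambda x$ to arrange $r(f)=r(\ft f\,)=:\rho$, I would pass to $g=f+s\ft f$: since $f$ is even one has $\ft g=sg$, a routine check gives $g\in\A_s(d)$ (in particular $sg(0)=sf(0)+\ft f(0)\le0$), and $g\not\equiv0$ because $g\equiv0$ would make $\ft f=-sf$ and, with the sign conditions, force $f$ to be eventually zero, hence compactly supported, hence $0$. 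Since $r(g)\le\max\{r(f),r(\ft f\,)\}=\rho$, this reduces the whole theorem to nonzero radial $s$-eigenfunctions $g$ that are eventually nonnegative with $sg(0)\le0$, and gives
\[
\bA_s(d)=\inf\{\,r(g):g\ \text{radial},\ \ft g=sg,\ g\not\equiv0,\ g\ \text{eventually nonnegative},\ sg(0)\le0\,\}.
\]

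\emph{The two-sided bound.} For the upper bound I would take $g$ to be a combination of two Laguerre--Gaussian eigenfunctions $\phi_k(x)=L_k^{(d/2-1)}(2\pi|x|^2)e^{-\pi|x|^2}$, of degrees $\{0,2\}$ when $s=+1$ and $\{1,3\}$ when $s=-1$, with coefficients chosen so that $g(0)=0$ and $g$ is eventually nonnegative; its last sign change lies at $|x|=\sqrt{t_{\max}/2\pi}$ with $t_{\max}$ the largest real root of a generalized Laguerre polynomial of degree $\le3$ and parameter $d/2-1$, and since $t_{\max}=O(d)$ this gives $\bA_s(d)\le C\sqrt d$. For the lower bound, supposing $\rho$ were small I would radialize and rescale as above and then apply Poisson summation over every small dilate $tL$ of a fixed lattice $L$ with $\lambda_1(L)\lambda_1(L^*)>\rho^2$: the sign conditions force $\sum_{x\in tL}f(x)\ge0$ and $\vol(tL)^{-1}\sum_{\xi\in(tL)^*}\ft f(\xi)\le0$, and as these are equal, $f$ and $\ft f$ vanish at every nonzero point of $tL$ and of $(tL)^*$; letting $t$ vary over an interval, the resulting shells cover a half-line, so $f$ and $\ft f$ are both compactly supported and therefore $f\equiv0$, a contradiction. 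Because every dimension contains a lattice with $\lambda_1(L)\lambda_1(L^*)\asymp d$ (for instance self-dual lattices with $\lambda_1\asymp\sqrt d$, by Minkowski--Hlawka/Siegel), this excludes $\rho<c\sqrt d$.

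\emph{Attaining the infimum.} I would take radial $s$-eigenfunctions $g_n$ satisfying the sign conditions with $r(g_n)\to\bA_s(d)$, normalized by $\|g_n\|_2=1$ (legitimate since $\ft g_n=sg_n$), and extract a subsequential limit $g$ that is again a radial $s$-eigenfunction with the sign conditions, $r(g)\le\bA_s(d)$, and $g\not\equiv0$. To additionally arrange $g(0)=0$ when $sg(0)<0$: for small $\sigma>0$ set $b_\sigma(x)=e^{-\pi|x|^2/\sigma}$ and $h_\sigma=\ft{b_\sigma}-b_\sigma$, a $(-1)$-eigenfunction that is strictly positive on $\{|x|\ge\rho\}$ (since $b_\sigma/\ft{b_\sigma}$ is decreasing in $|x|^2$ and is $<1$ at $|x|=\rho$ once $\sigma$ is small) with $h_\sigma(0)<0$; then $g+\epsilon h_\sigma$ with $\epsilon=g(0)/(1-\sigma^{d/2})>0$ is a nonzero radial $(-1)$-eigenfunction that is eventually nonnegative, vanishes at the origin, and has $r\le\rho$, so the infimum remains attained under the extra requirement $g(0)=0$.

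\emph{Infinitely many zeros.} Let $g$ be a radial $s$-eigenfunction with $g(0)=0$, eventually nonnegative, and $r(g)=\bA_s(d)$; then $g\ge0$ on $\{|x|\ge\bA_s(d)\}$ by continuity. Choosing a lattice $L$ with $\min\{\lambda_1(L),\lambda_1(L^*)\}\ge\bA_s(d)$, we get $g\ge0$ on $L\setminus\{0\}$ and $\ft g=-g\le0$ on $L^*\setminus\{0\}$, so Poisson summation forces $\sum_{x\in L}g(x)=\vol(L)^{-1}\sum_{\xi\in L^*}\ft g(\xi)=0$ with every summand vanishing; since the nonzero vector lengths of $L$ are an infinite discrete set of which only finitely many equal $\bA_s(d)$, the function $g$ vanishes at infinitely many radii larger than $\bA_s(d)$ (for $s=+1$ one instead invokes \cite{GOS}, or argues by perturbation, where the delicate point is controlling the perturbation at infinity, using that by Hardy's uncertainty principle a nonzero $s$-eigenfunction cannot decay like a Gaussian). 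I expect the hardest parts to be the compactness argument that attains the infimum --- $L^1$ is not reflexive and the pointwise sign conditions are not weakly closed, so one must replace soft functional analysis by the rigidity of Fourier eigenfunctions (Laguerre expansions, local uniform bounds) --- and pinning down the quantitative lattice inequalities so that the argument goes through in every dimension.
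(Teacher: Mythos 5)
Your reduction to radial eigenfunctions and your Laguerre upper bound are essentially the paper's own arguments, and your Poisson-summation lower bound is a legitimate alternative to the paper's elementary $L^1$--$L^\infty$ estimate (it amounts to Proposition~\ref{prop:lowerbound} applied to lattices, and with Conway--Thompson/Siegel lattices of minimum $\gtrsim \sqrt{d}$ it even recovers the same constant $1/\sqrt{2\pi e}$, though you would still need a mollification step as in Lemma~\ref{lemma:vanishing}, since Poisson summation is not automatic for functions known only to lie in $L^1$ with $L^1$ transform). The fatal problem is your treatment of the ``infinitely many roots'' assertion for $s=-1$. You need, in every dimension, a lattice $L$ with $\min\{\lambda_1(L),\lambda_1(L^*)\}\ge \bA_-(d)$, i.e.\ (after rescaling) $\lambda_1(L)\lambda_1(L^*)\ge \bA_-(d)^2$. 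But your own lower-bound mechanism shows the opposite inequality for \emph{every} lattice: summing over dilates $tL$ proves $\bA_-(d)\ge\sqrt{\lambda_1(L)\lambda_1(L^*)}$. So the lattice you require exists only if $\bA_-(d)$ equals the optimal lattice value $\sup_L\sqrt{\lambda_1(L)\lambda_1(L^*)}$ (the Berg\'e--Martinet constant), which is known only for $d\in\{1,8,24\}$, is merely conjectural for $d=2$, and is expected to fail in general: for instance in $\R^4$ the best lattice product gives $2^{1/4}\approx 1.189$ (from $D_4$), while the numerics indicate $\bA_-(4)\approx 1.2038$, and similar strict gaps appear in most dimensions. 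Thus for generic $d$ no admissible lattice exists and your argument collapses. The paper instead proves the stronger statement that an extremizer cannot be eventually positive (for $d\ge 2$) by a perturbation argument: it adds a multiple of $\varphi_t-\ft\varphi_t$ corrected by a high-order Laguerre eigenfunction $\psi^\nu_{2n+1}/\psi^\nu_{2n+1}(0)$, using Fej\'er's asymptotics to make the correction uniformly small away from the origin, and it treats $d=1$ separately via Poisson summation over $\Z$.

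The existence-of-extremizer step is also genuinely incomplete. With the normalization $\|g_n\|_2=1$ nothing prevents the weak limit from being zero, and you also lose control of $\|g_n\|_1$, so the limit need not even lie in $\A_-(d)$; you acknowledge this is the hard point but do not supply the missing ingredient. In the paper the sequence is normalized in $L^1$, which gives $\|f_n\|_\infty\le\|\ft f_n\|_1=1$ and hence $\|f_n\|_2\le 1$, Mazur's lemma upgrades weak $L^2$ convergence to a.e.\ convergence, Fatou gives $f\in L^1$ and $\ft f(0)\le 0$, and---crucially---Jaming's higher-dimensional version of Nazarov's uncertainty principle (or Proposition~2.6 of \cite{BD}) yields a uniform bound $\int_{B^d_{r(f_n)}} f_n\le K<0$, which passes to the limit and is precisely what forces the limit to be nonzero. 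Some quantitative substitute of this kind is indispensable; without it the compactness argument does not close, and the final fix $g\mapsto g+\epsilon h_\sigma$ to achieve $g(0)=0$ (which is fine, and mirrors the paper's Lemma~\ref{reduction_lemma}) has nothing to act on.
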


In particular, $\bA_-(d) > 0$. Thus, we obtain a natural counterpart to the
uncertainty principle of Bourgain, Clozel, and Kahane, but with $f$ and $\ft
f$ having opposite signs, and with the optimal function coming from
Problem~\ref{-1problem}.  We can take $c = 1/\sqrt{2\pi e}$ and $C=1$.

This uncertainty principle places the linear programming bound in a broader
analytic context and gives a deeper significance to the auxiliary functions
that optimize this bound. Outside of a few exceptional dimensions, they do
not seem to come close to solving the sphere packing problem, but they
conjecturally achieve an optimal tradeoff between sign conditions in the
uncertainty principle.

Except for extremal functions for $\bA_+(1)$, our proof in
Section~\ref{root_property} and the proof in \cite{GOS} actually show that
any extremal function cannot be eventually positive; that is, it must vanish
on spheres with arbitrarily large radii, not just at infinitely many radii
greater than $\bA_s(d)$.  We strongly believe that this is the case for
$\bA_+(1)$ as well.

Problems~\ref{+1problem} and~\ref{-1problem} are closely related and behave
in complementary ways. We prove Theorem~\ref{thm_facts_+-1} by adapting the
techniques of \cite{GOS} to $-1$ eigenfunctions.  However, the analogy
between these problems is not perfect. For example, the equality
$\bA_+(12)=\bA_-(8)=\sqrt{2}$ suggests that perhaps $\bA_+(28) = \bA_-(24) =
2$, but that turns out to be false (see Section~\ref{section:numerics}).
Similarly, relatively simple explicit formulas show that $\bA_-(1)=1$, while
$\bA_+(1)$ remains a mystery.

In addition to its values in specific dimensions, the asymptotic behavior of
$\bA_s(d)$ as $d \to \infty$ is of substantial interest.  It was shown in
\cite{BCK} that
\[
0.2419\ldots = \frac{1}{\sqrt{2\pi e}}\leq \liminf_{d\to\infty} \frac{\bA_+(d)}{\sqrt{d}}
\leq \limsup_{d\to\infty} \frac{\bA_+(d)}{\sqrt{d}} \leq \frac{1}{\sqrt{2\pi}} = 0.3989\ldots.
\]
In Section~\ref{section:-1uncertainty}, we obtain the same lower bound for
the case of $\bA_-(d)$, and an improved upper bound of $0.3194\ldots$ for
that case based on \cite{CZ} (the exact value is complicated).

\begin{conjecture} \label{conj:limit}
The limits
\[
\lim_{d \to \infty} \frac{\bA_+(d)}{\sqrt{d}} \quad\text{and}\quad \lim_{d \to \infty} \frac{\bA_-(d)}{\sqrt{d}}
\]
exist and are equal.
\end{conjecture}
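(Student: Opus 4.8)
We have no proof of Conjecture~\ref{conj:limit}; here we describe what seems to us the most natural line of attack, and where it breaks down.

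\emph{Existence of the individual limits.} The natural route is a subadditivity argument followed by Fekete's lemma. One would like to show that $\bA_s(m+n)^2 \le \bA_s(m)^2 + \bA_s(n)^2$ for all $m$ and $n$; this is the right guess rather than superadditivity, because the expected asymptotics $\bA_s(d) \sim c_s\sqrt{d}$ would make such an inequality asymptotically an equality. Combined with the lower bound $\bA_s(d) \ge \sqrt{d/(2\pi e)}$ from Theorem~\ref{thm_facts_+-1}, Fekete's lemma would then give $\lim_{d\to\infty}\bA_s(d)^2/d = \inf_{d\ge 1}\bA_s(d)^2/d \in (0,\infty)$, and hence the existence of $\lim_{d\to\infty}\bA_s(d)/\sqrt{d}$. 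To establish the subadditivity one would want, from near-extremal radial functions $f$ on $\R^m$ and $g$ on $\R^n$ (which by Theorem~\ref{thm_facts_+-1} may be taken self-dual up to sign, with $r(f)=r(\ft f)=\bA_s(m)$ and $r(g)=r(\ft g)=\bA_s(n)$), to build a competitor $F$ on $\R^{m+n}$ whose sign-change radii satisfy $r(F)\,r(\ft F) \le r(f)\,r(\ft f) + r(g)\,r(\ft g)$ up to a negligible error.

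\emph{The main obstacle.} This is where the difficulty lies, and we do not see how to carry it out. Every obvious candidate for $F$ fails. The tensor product $f\otimes g$ satisfies $\ft{f\otimes g} = \ft f\otimes\ft g$, so it transforms correctly, but it is neither radial nor eventually nonnegative: if $|x|$ is small and $|v|$ is large then $(f\otimes g)(x,v)$ can be negative even though $|(x,v)|$ is enormous, since eventual nonnegativity is a genuinely nonlocal condition on $\R^{m+n}$ and is not implied by the corresponding conditions on the two factors. Averaging $f\otimes g$ over the orthogonal group recovers radiality and the eigenvalue relation, but it smears the ball in $\R^m$ on which $f$ is negative across all of $\R^n$, and we see no way to control the sign of the resulting radial profile at large radius. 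This nonlocality is, we believe, exactly why the asymptotic window $\left[1/\sqrt{2\pi e},\,1/\sqrt{2\pi}\right]$ of Bourgain, Clozel, and Kahane has never been narrowed; even merely proving that the limit exists, without evaluating it, appears to require a genuinely new gluing construction, or else a rigidity argument showing that the ratio $\bA_s(d)/\sqrt d$ cannot oscillate.

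\emph{Equality of the two limits.} Granting the existence of the individual limits, one then wants $\left|\bA_+(d) - \bA_-(d)\right| = o(\sqrt d)$, ideally $O(1)$. (One should not hope for an exact relation under a constant shift of the dimension: $\bA_+(12)=\bA_-(8)$ while $\bA_+(28)\neq\bA_-(24)$, as noted above; but a bounded error would suffice.) A promising tool is the operator $-\frac{1}{2\pi r}\frac{d}{dr}$, which intertwines the radial Fourier transform in dimensions $d$ and $d+2$ (as one checks on Gaussians); because it is a differential rather than a pointwise operator it does not commute with the Fourier transform, and a suitable combination of it with the identity carries an $s$-eigenfunction on $\R^d$ to a $(-s)$-eigenfunction on $\R^{d+2}$. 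If one could show that this transformation sends a near-extremal function for $\bA_s(d)$ to a valid competitor for $\bA_{-s}(d+2)$ whose last sign change has moved by only $O(1)$, then $\bA_+$ and $\bA_-$ would be interlaced to within a bounded error and the two limits would coincide. The catch is once more nonlocal: the transformation is transparent on the Fourier side, but whether the transformed function still vanishes at the origin, remains eventually nonnegative, and has comparable sign-change radius is not under control. A complementary avenue is to route through the Cohn--Elkies linear programming bound, since $\bA_-(d)$ is bounded above by the optimum in Problem~\ref{-1problem}; but the asymptotics of that optimum are themselves known only up to constants, so that approach, too, currently rests on numerical evidence rather than on proof.
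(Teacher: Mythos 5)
The statement you were asked about is Conjecture~\ref{conj:limit}, and the paper does not prove it: it is offered as an open problem, supported only by the numerical evidence of Section~\ref{section:numerics} (chiefly the observed alignment $\bA_+(d) \approx \bA_-(d-4)$ in Table~\ref{table:data}) and by the conjectural summation formulas of Section~\ref{section:summation-formulas}. So there is no argument in the paper to compare yours against, and your decision not to claim a proof is the correct one. The obstacles you name are genuine: no subadditivity-type statement is known for $\bA_s(d)^2$, and the tensor-product construction does fail exactly where you say, because eventual nonnegativity on $\R^{m+n}$ is not inherited from the factors. One small correction to that discussion: for $s=-1$ the tensor product also lands in the wrong eigenspace, since the product of two $-1$ eigenfunctions is a $+1$ eigenfunction of the Fourier transform on $\R^{m+n}$, so even the transformation behavior is only ``correct'' in the $+1$ case.

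On the second half, your $d \to d+2$ intertwining heuristic is reasonable but differs from the structural evidence the paper itself points to, which is a shift by \emph{four} dimensions: the empirical approximation $\bA_+(d)\approx\bA_-(d-4)$ and the eigenvalue $(-1)^{1+d/4}$ in Proposition~\ref{prop:psi}, with no known mechanism converting near-extremizers across that shift (and with the exactness $\bA_+(12)=\bA_-(8)$ known to break at $\bA_+(28)\neq\bA_-(24)$, as you note). One asymmetry worth recording: the upper bound $\limsup_{d\to\infty}\bA_-(d)/\sqrt{d}\le 0.3194\ldots$ is actually proved in Section~\ref{section:-1uncertainty} via the construction of \cite{CZ} applied to $g=\ft f - f$, whereas for $\bA_+$ only the weaker bound $1/\sqrt{2\pi}$ from \cite{BCK} is known; the paper explicitly observes that the stronger bound for $\bA_+$ would follow from Conjecture~\ref{conj:limit}, so any proof of equality of the limits would already carry nontrivial content beyond what is currently provable.
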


See Section~\ref{section:numerics} for the numerical evidence supporting this
conjecture.  We expect that the common value of these limits is strictly
between the bounds $0.2419\ldots$ and $0.3194\dotsc$, and perhaps not so far
from the latter.

In the remainder of the paper, we prove Theorem~\ref{thm_dimension12} in
Section~\ref{section:dimension12} and Theorem~\ref{thm_facts_+-1} in
Section~\ref{section:-1uncertainty}.  In Section~\ref{section:numerics} we
present numerical computations and conjectures, and we conclude in
Section~\ref{section:summation-formulas} with a construction of summation
formulas that validate our numerics and lend support to our general
conjectures about $\bA_s(d)$.

\section{The $+1$ eigenfunction uncertainty principle in dimension $12$}
\label{section:dimension12}

In this section, we prove Theorem~\ref{thm_dimension12}.

\subsection{Optimality} \label{optimality}

We begin by establishing that $\bA_+(12) \geq \sqrt{2}$. For this inequality,
we use a special Poisson-type summation formula for radial Schwartz functions
$f\colon\R^{12}\to \C$ based on the modular form $E_6$.  Converting a modular
form into such a formula is a standard technique; for completeness, we will
give a direct proof.

Consider the normalized Eisenstein series $E_6 \colon \h \to \C$, where $\h$
denotes the upper half-plane in $\C$ (see, for example, \cite[\S2]{Z}).  This
function has the Fourier expansion
\begin{equation} \label{eq:defE6}
E_6(z) = 1-\sum_{j\geq 1} c_j e^{2\pi i j z},
\end{equation}
where $c_j = 504\sigma_{5}(j)$ and $\sigma_5(j)$ is the sum of the fifth
powers of the divisors of $j$. In particular, $c_j >0$ for $j\geq 1$ and we
have the trivial bound $c_j\leq 504j^6$.  Because $E_6$ is a modular form of
weight $6$ for $\mathrm{SL}_2(\Z)$, it satisfies the identity
\begin{equation} \label{eq:E6identity}
E_6(z) = z^{-6}E_6(-1/z).
\end{equation}
This identity turns into a summation formula for a Gaussian $f \colon \R^{12}
\to \R$ defined by $f(x) = e^{-\pi \alpha|x|^2}$ with $\alpha>0$, or more
generally $\Re(\alpha) > 0$.  Specifically, if we set $z = i \alpha$, then
$f(x) = e^{\pi i z |x|^2}$ and $\ft f(\xi) = -z^{-6} e^{\pi i(-1/z)|\xi|^2}$,
from which it follows that $f(\sqrt{2j}) = e^{2\pi i j z}$ and $\ft
f(\sqrt{2j}) = -z^{-6} e^{2\pi i j (-1/z)}$, where we use $f(\sqrt{2j})$ to
denote the common value $f(x)$ with $|x|=\sqrt{2j}$.  Hence combining
\eqref{eq:defE6} and \eqref{eq:E6identity} yields
\[
f(0) - \sum_{j\geq 1} c_j f(\sqrt{2j}) = - \ft f(0) + \sum_{j\geq 1} c_j \ft f(\sqrt{2j}).
\]

The key to proving that $\bA_+(12) \geq \sqrt{2}$ is the following lemma,
which extends this summation formula to arbitrary radial Schwartz functions.

\begin{lemma}\label{E6_poisson_summation}
For all radial Schwartz functions $f\colon\R^{12}\to \C$,
\[
f(0) - \sum_{j\geq 1} c_j f(\sqrt{2j}) = - \ft f(0) + \sum_{j\geq 1} c_j \ft f(\sqrt{2j}).
\]
\end{lemma}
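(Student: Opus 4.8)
The plan is to reduce the general case to the Gaussian case already established, using the fact that Gaussians span a dense enough subspace. First I would define, for a radial Schwartz function $f\colon\R^{12}\to\C$, the quantity
\[
D(f) = f(0) - \sum_{j\geq 1} c_j f(\sqrt{2j}) + \ft f(0) - \sum_{j\geq 1} c_j \ft f(\sqrt{2j}),
\]
so that the claim is $D(f) = 0$ for all such $f$. The computation preceding the lemma shows $D(f_\alpha) = 0$ for every Gaussian $f_\alpha(x) = e^{-\pi\alpha|x|^2}$ with $\Re(\alpha) > 0$; note $\ft{f_\alpha}(\xi) = \alpha^{-6} e^{-\pi|\xi|^2/\alpha}$, which justifies the manipulation with $z = i\alpha$ and, more generally, allows $\alpha$ to range over the right half-plane by analytic continuation. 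The series converge absolutely because $f$ and $\ft f$ are Schwartz and $c_j \le 504 j^6$ has polynomial growth.

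The main step is to represent an arbitrary radial Schwartz function as a superposition of Gaussians. Writing $f(x) = F(\pi|x|^2)$ for a Schwartz function $F$ on $[0,\infty)$, I would use the Laplace-transform representation: if $g$ is the (inverse Laplace) density with $F(t) = \int_0^\infty e^{-\alpha t}\, g(\alpha)\, d\alpha$, then $f(x) = \int_0^\infty f_\alpha(x)\, g(\alpha)\, d\alpha$ and, since the Fourier transform in $\R^{12}$ commutes with this integral, $\ft f(\xi) = \int_0^\infty \alpha^{-6} e^{-\pi|\xi|^2/\alpha}\, g(\alpha)\, d\alpha$. Then
\[
D(f) = \int_0^\infty D(f_\alpha)\, g(\alpha)\, d\alpha = 0
\]
provided the interchange of $D$ with the integral is justified. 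Rather than controlling the inverse Laplace density directly — which need not decay well — I would instead work on a dense subclass where the representation is transparent: functions of the form $f(x) = p(|x|^2)e^{-\pi|x|^2}$ with $p$ a polynomial, equivalently finite linear combinations of $\partial_\alpha^k f_\alpha|_{\alpha=1}$. Since $D$ is continuous in $\alpha$ (locally uniformly, with all derivatives, by the bounds above) and vanishes identically in $\alpha$, every such derivative combination also has $D = 0$. These functions are dense in the radial Schwartz space in the topology where $f \mapsto D(f)$ is continuous — indeed $D$ is continuous with respect to finitely many Schwartz seminorms, because both $f(0) - \sum c_j f(\sqrt{2j})$ and the corresponding $\ft f$ expression are bounded by a seminorm of $f$ together with one of $\ft f$, using $c_j = O(j^6)$ and rapid decay. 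Hence $D \equiv 0$ on all radial Schwartz functions.

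The main obstacle is the density/continuity argument: one must pin down a topology in which (i) $D$ is continuous and (ii) the span of Gaussians (or Hermite-type functions $p(|x|^2)e^{-\pi|x|^2}$) is dense. Continuity of $D$ is straightforward from the polynomial bound on $c_j$ and the Schwartz decay of $f$ and $\ft f$, so the real content is the density statement together with the fact that the radial Fourier transform is continuous on radial Schwartz space — both standard, but needing to be invoked carefully so that convergence $f_n \to f$ yields simultaneously $f_n \to f$ and $\ft{f_n} \to \ft f$ in the relevant seminorms. An alternative that sidesteps density entirely is to justify the Laplace-representation interchange directly via dominated convergence, bounding the tails of $\sum_j c_j f_\alpha(\sqrt{2j})$ and $\sum_j c_j \alpha^{-6} e^{-2\pi j/\alpha}$ uniformly for $\alpha$ in compact subsets of $(0,\infty)$ and using integrability of $g$ near $0$ and $\infty$ coming from the Schwartz hypothesis on $F$; I would present whichever is cleaner, but expect the Hermite-density route to be the shortest.
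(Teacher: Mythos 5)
Your main route is correct but genuinely different from the paper's. Both arguments hinge on the same two pillars: the functional $D$ (the paper calls it $\Lambda$) is continuous on the radial Schwartz space because $c_j = O(j^6)$, and it vanishes on Gaussians $e^{-\pi\alpha|x|^2}$ with $\Re(\alpha)>0$ by the $E_6$ functional equation. Where you diverge is in how the Gaussian case is promoted to a dense subspace. You differentiate in $\alpha$ at $\alpha=1$ to kill $D$ on all functions $p(|x|^2)e^{-\pi|x|^2}$ and then invoke density of these Laguerre--Hermite functions in the radial Schwartz topology; that density is true (Hermite expansions converge in $\mathcal{S}$, and radializing preserves this), but it is the one nontrivial ingredient of your argument and deserves a precise citation or proof, since it is strictly stronger than $L^2$ density. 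The paper instead reduces to compactly supported smooth radial functions (elementarily dense in $\mathcal{S}$), writes such an $f$ as $F(|x|^2)e^{-\pi|x|^2}$, and uses one-dimensional Fourier inversion to express it as a continuous superposition $\int \ft F(t)\, e^{-\pi(1-2it)|x|^2}\,dt$ of Gaussians with complex parameter along the vertical line $\Re(\alpha)=1$, then passes $\Lambda$ through the integral via Riemann sums converging in the Schwartz topology (following Radchenko--Viazovska). This is exactly the rigorous substitute for the real-Laplace representation you float as an alternative: a general Schwartz profile $F$ is \emph{not} the Laplace transform of a well-behaved density on $(0,\infty)$, so that variant of your argument would fail as stated, whereas the complex (Bromwich-type) representation always exists. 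In short, your Hermite-density route buys a cleaner reduction with no integral interchange, at the cost of a heavier density theorem; the paper's route uses only elementary density but needs the complex-Gaussian superposition and a limiting argument inside the functional.
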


We follow the approach used to prove Theorem~1 in \cite[Section~6]{RV}.

\begin{proof}
Let $\Lambda\colon \Schw(\R^{12})\to \C$ be the functional
\[
\Lambda(f) = f(0) - \sum_{j\geq 1} c_j f(\sqrt{2j})  + \ft f(0) - \sum_{j\geq 1} c_j \ft f(\sqrt{2j})
\]
on the radial Schwartz space $\Schw(\R^{12})$.  As noted above,
$\Lambda(f)=0$ whenever $f(x) = e^{-\pi \alpha|x|^2}$ with $\Re(\alpha) > 0$.
Moreover, the bound $c_j =O(j^6)$ shows that $\Lambda$ is a continuous linear
functional in the topology of the Schwartz space. Thus, we need only prove
our desired identity for compactly supported, radial $C^\infty$ functions,
which are dense in $\Schw(\R^{12})$.

Write $f(x)=F(|x|^2)e^{-\pi |x|^2}$, where $F\colon\R \to \R$ is a smooth and
compactly supported function. Let $\ft F$ be the one-dimensional Fourier
transform of $F$, and note that $\ft F$ is also rapidly decreasing. By
Fourier inversion,
\[
f(x) = \int_\R \ft F(t)e^{-\pi (1-2i t) |x|^2}\,dt = \lim_{T\to \infty}
\int_{-T}^T \ft F(t)e^{-\pi (1-2i t) |x|^2}\,dt.
\]
The functions $x\mapsto\int_{-T}^T \ft F(t)e^{-\pi (1-2i t) |x|^2}\,dt$
belong to $\Schw(\R^{12})$ for each $T>0$ and converge to $f$ in the Schwartz
topology. Moreover,
\[
\Lambda\mathopen{}\left(x \mapsto \int_{-T}^T \ft F(t)e^{-\pi (1-2i t) |x|^2}\,dt\right)\mathclose{}
= \int_{-T}^T \ft F(t)\Lambda\mathopen{}\left(x \mapsto e^{-\pi (1-2i t) |x|^2}\right)\mathclose{}\,dt = 0,
\]
where the commutation is justified since the Riemann sums of the integral
converge to the integral in the topology of $\Schw(\R^{12})$. This finishes
the proof of the lemma.
\end{proof}

Noam Elkies has provided the following alternative proof of
Lemma~\ref{E6_poisson_summation} using Poisson summation.  Explicit
calculation shows that one can write the modular form $E_6$ in terms of theta
series of lattices and their duals as
\[
E_6 = -\frac{11}{10} \Theta_{D_{12}} + \frac{11}{20} \Theta_{D^*_{12}} - \frac{1}{20} \Theta_{L} + \frac{8}{5} \Theta_{L^*},
\]
where $L$ is the $D_{12}$ root lattice rescaled by a factor of $1/\sqrt{2}$.
Then the summation formula from Lemma~\ref{E6_poisson_summation} becomes a
linear combination of the Poisson summation formulas for the lattices
$D_{12}$, $D_{12}^*$, $L$, and $L^*$, which implies that it holds for all
radial Schwartz functions.  This argument shows that
Lemma~\ref{E6_poisson_summation} is closely related to Poisson summation,
while the proof we gave above applies directly to other modular forms as well
as $E_6$.

\begin{lemma} \label{lemma:vanishing}
Let $f \in \A_+(12)$.  If both $r(f)$ and $r(\ft f\,)$ are at most
$\sqrt{2}$, then $f(x) = \ft f(x) = 0$ whenever $|x| = \sqrt{2j}$ with $j$ a
nonnegative integer.
\end{lemma}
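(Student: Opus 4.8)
The plan is to apply the Poisson-type summation formula of Lemma~\ref{E6_poisson_summation} and then read off all the vanishing directly from the sign conditions in the definition of $\A_+(12)$, exploiting that every coefficient $c_j$ is strictly positive and that $\sqrt{2j}\ge\sqrt2$ for all $j\ge1$. A few preliminary reductions come first. Since $f\in L^1(\R^{12})$ and $\ft f\in L^1(\R^{12})$, Fourier inversion shows that $f$ and $\ft f$ agree almost everywhere with continuous functions, so their values on spheres are meaningful. It suffices to prove that the average of $f$ over the sphere $|x|=\sqrt{2j}$ vanishes for each integer $j\ge0$, and likewise for $\ft f$: because $f$ is eventually nonnegative and does not change sign for $|x|\ge r(f)$, continuity gives $f\ge0$ on $|x|\ge r(f)$, and since $r(f)\le\sqrt2\le\sqrt{2j}$ this yields $f\ge0$ on the entire sphere $|x|=\sqrt{2j}$, so a vanishing average forces $f\equiv0$ there (and symmetrically for $\ft f$, using $r(\ft f)\le\sqrt2$). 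These spherical averages are exactly the radial values of the function $\tilde f$ obtained by averaging $f$ over the rotation group $O(12)$, and $\tilde f$ again lies in $\A_+(12)$, with $\ft{\tilde f}$ the $O(12)$-average of $\ft f$, with $\tilde f(0)=f(0)$ and $\ft{\tilde f}(0)=\ft f(0)$, and with $r(\tilde f)\le r(f)$ and $r(\ft{\tilde f})\le r(\ft f)$ (averaging a function that is $\ge0$ outside some radius keeps it $\ge0$ there). So we may assume $f$ is radial.

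The one genuinely technical point is that Lemma~\ref{E6_poisson_summation} is stated for radial Schwartz functions, while here $f$ is only continuous and integrable with continuous integrable transform; for such $f$ the two series need not even converge absolutely a priori. I would handle this by a standard regularization: convolve $f$ with a narrow radial Gaussian to smooth it and multiply by a Gaussian damping factor $e^{-\pi t|x|^2}$ to land in $\Schw(\R^{12})$, apply Lemma~\ref{E6_poisson_summation} to the resulting functions, and then pass to the limit, using the growth bound $c_j=O(j^6)$ from the Fourier expansion of $E_6$ together with $r(f),r(\ft f)\le\sqrt2$ to control the tails of the two sums. Alternatively, and more cleanly, one observes that proving the lower bound $\bA_+(12)\ge\sqrt2$ (for which this lemma is an ingredient) reduces via \cite{GOS} to radial Schwartz extremizers $f=\ft f$, to which Lemma~\ref{E6_poisson_summation} applies verbatim. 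In either case the substance of the argument is the short sign chase that follows.

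With $f$ radial and the summation formula in force, rewrite Lemma~\ref{E6_poisson_summation} as
\[
f(0)+\ft f(0)=\sum_{j\ge1}c_j\bigl(f(\sqrt{2j})+\ft f(\sqrt{2j})\bigr).
\]
By conditions \eqref{cond:2} and \eqref{cond:3} we have $f(0)\le0$ and $\ft f(0)\le0$, so the left-hand side is $\le0$. On the right-hand side, $f\ge0$ on $|x|\ge r(f)$ as noted above, hence $f(\sqrt{2j})\ge0$ for every $j\ge1$ since $r(f)\le\sqrt2\le\sqrt{2j}$; the same reasoning with $r(\ft f)\le\sqrt2$ gives $\ft f(\sqrt{2j})\ge0$. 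As $c_j=504\,\sigma_5(j)>0$, the right-hand side is $\ge0$. Therefore both sides equal $0$.

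Finally I extract the conclusion. From $f(0)+\ft f(0)=0$ with $f(0)\le0$ and $\ft f(0)\le0$ we get $f(0)=\ft f(0)=0$, which is the case $j=0$. From $\sum_{j\ge1}c_j\bigl(f(\sqrt{2j})+\ft f(\sqrt{2j})\bigr)=0$, a convergent sum of nonnegative terms with strictly positive coefficients, every summand vanishes, so $f(\sqrt{2j})+\ft f(\sqrt{2j})=0$ for each $j\ge1$; since both $f(\sqrt{2j})$ and $\ft f(\sqrt{2j})$ are nonnegative, each is $0$. Translating back through the reduction to radial functions, $f$ and $\ft f$ vanish identically on every sphere $|x|=\sqrt{2j}$, which is the claim. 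The main obstacle is entirely in the regularity bookkeeping of the second step — legitimately invoking the $E_6$ summation formula for an arbitrary member of $\A_+(12)$; granting that, the rest is an immediate sign chase powered by the positivity of the $c_j$.
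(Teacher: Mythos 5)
Your overall strategy is the same as the paper's: radialize, approximate by radial Schwartz functions, apply Lemma~\ref{E6_poisson_summation} to the approximants, and finish with the sign chase powered by $c_j>0$; the radialization reduction and the sign chase are correct and match the paper. The divergence is exactly at the step you flag as the one technical point, and there your proposal leaves a real estimate unproven. Gaussian convolution (plus the damping factor $e^{-\pi t|x|^2}$) does produce Schwartz approximants, but a Gaussian mollifier has full support: it smears the negative part of $f$ (supported in $|x|\le r(f)\le\sqrt2$) onto every sphere $|x|=\sqrt{2j}$, so the approximants are no longer nonnegative at the radii $\sqrt{2j}$ for $j\ge 2$, and the same happens on the Fourier side. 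Hence the terms of the two series are not termwise nonnegative, and before passing to the limit you must bound $\sum_j c_j\cdot(\text{negative errors})$ with $c_j\asymp j^6$ and show it tends to zero; this can be done (the Gaussian tails beat $j^6$ and the $\delta^{-12}$ normalization), but it is precisely the bookkeeping you omit, and it is what the paper's proof is engineered to avoid: it mollifies by $f_\varepsilon=(f*\varphi_\varepsilon)\,\ft\varphi_\varepsilon$ with $\varphi_\varepsilon$ supported in $B^{d}_\varepsilon$ and $\ft\varphi_\varepsilon\ge 0$, so that $f_\varepsilon\ge 0$ exactly for $|x|\ge r(f)+\varepsilon$ and $\ft f_\varepsilon=(\ft f\,\ft\varphi_\varepsilon)*\varphi_\varepsilon\ge 0$ exactly for $|x|\ge r(\ft f\,)+\varepsilon$, leaving only the finitely many terms $j=0,1$ to be handled by limits, with no quantitative tail estimate. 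A related presentational point: for $f$ merely in $\A_+(12)$ you cannot assert that the summation formula ``is in force'' for $f$ with convergent series a priori; the limiting argument should be phrased as an inequality chain (Fatou plus the sign conditions), which is how the paper concludes.

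Your proposed ``cleaner alternative'' does not work as stated: the extremizers supplied by \cite{GOS} are radial with $f=\ft f$, but they are not known to be Schwartz, so Lemma~\ref{E6_poisson_summation} does not apply to them verbatim; moreover the paper needs the present lemma for arbitrary $f\in\A_+(12)$ with $r(f),r(\ft f\,)\le\sqrt2$, since it is applied to rescalings of an arbitrary $f$ with $r(f)r(\ft f\,)<2$ to force compact support, so restricting to extremizers would not support the subsequent argument. Stick with the mollification route, but either carry out the exponential error estimate or switch to a compactly supported mollifier with nonnegative Fourier transform as the paper does.
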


\begin{proof}
Without loss of generality, we can assume $f$ is a radial function;
otherwise, we simply average its rotations about the origin.  (If the
averaged function vanishes at radius $\sqrt{2j}$, then so does $f$ because
$r(f) \le \sqrt{2}$, and the same holds for $\ft f$.)

If $f$ is a radial Schwartz function, then Lemma~\ref{E6_poisson_summation}
implies that
\[
f(0) + \ft f(0)  = \sum_{j\geq 1} c_j f(\sqrt{2j})  + \sum_{j\geq 1} c_j \ft f(\sqrt{2j}),
\]
and the conclusion follows from the inequalities $f(0) \le 0$, $\ft f(0) \le
0$, $f(\sqrt{2j}) \ge 0$, $\ft f(\sqrt{2j}) \ge 0$, and $c_j > 0$ for $j \ge
1$.

For general $f$, we can apply a standard mollification argument. Let
$\varphi\colon \R^d \to \R$ be a nonnegative, radial $C^\infty$ function
supported in the unit ball $B^{d}_1$ with $\ft \varphi \geq 0$ and $\ft
\varphi(0)=1$, so that the functions $\varphi_\varepsilon$ defined for
$\varepsilon>0$ by $\varphi_\varepsilon(x) =
\varepsilon^{-d}\varphi(x/\varepsilon)$ form an approximate identity.

Now let $f_\varepsilon = (f * \varphi_\varepsilon) \ft\varphi_\varepsilon$.
Because $f$ and $\ft f$ are continuous functions that vanish at infinity,
$f_\varepsilon \to f$ and $\ft f_\varepsilon \to \ft f$ uniformly on $\R^d$
as $\varepsilon \to 0$. Since $\supp(\varphi_\varepsilon) \subseteq
B^{d}_\varepsilon$, we obtain the inequality $f_\varepsilon(x) \geq 0$
whenever $|x|\geq r(f) + \varepsilon$. Similarly $\ft{f_\varepsilon} = (\ft f
\ \ft\varphi_\varepsilon)*\varphi_\varepsilon$, which implies that $\ft
f_\varepsilon(x)\geq 0$ whenever $|x|\geq r(\ft f\,)+\varepsilon$.
Furthermore, $f_\varepsilon$ is a Schwartz function.  To see why, note that
$\ft\varphi_\varepsilon$ is a Schwartz function, while $f *
\varphi_\varepsilon$ is smooth and all its derivatives are bounded.

Now that we have Schwartz functions approximating $f$, we again apply
Lemma~\ref{E6_poisson_summation} to obtain
\[
f_\varepsilon(0) + \ft f_\varepsilon(0)  =
\sum_{j\geq 1} c_j f_\varepsilon(\sqrt{2j})
+ \sum_{j\geq 1} c_j \ft f_\varepsilon(\sqrt{2j}).
\]
To derive information from this identity, we combine the limits
$f_\varepsilon(\sqrt{2j}) \to f(\sqrt{2j})$ and $\ft f_\varepsilon(\sqrt{2j})
\to \ft f(\sqrt{2j})$ for $j \ge 0$, the inequalities $f(0) \le 0$, $\ft f(0)
\le 0$, $f(\sqrt{2}) \ge 0$, and $\ft f(\sqrt{2}) \ge 0$, and the
inequalities $f_\varepsilon(\sqrt{2j}) \ge 0$  and $\ft
f_\varepsilon(\sqrt{2j}) \ge 0$ for $j \ge 2$ (when $\varepsilon <
2-\sqrt{2}$).  We conclude that $f(\sqrt{2j}) = \ft f(\sqrt{2j}) = 0$ for $j
\ge 0$, as desired.
\end{proof}

We will now apply this lemma to prove the lower bound $\bA_+(12) \ge
\sqrt{2}$.

\begin{lemma}
Suppose $f \in \A_+(12)$.  If $r(f)r(\ft f\,) < 2$, then $f$ vanishes
identically.
\end{lemma}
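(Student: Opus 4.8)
The plan is to leverage the previous lemma together with a scaling argument. Suppose $f \in \A_+(12)$ with $r(f)r(\ft f\,) < 2$. The quantity $r(f)r(\ft f\,)$ is scale-invariant, so after replacing $f(x)$ by $f(\lambda x)$ for a suitable $\lambda > 0$ we may arrange that $r(f) = r(\ft f\,) < \sqrt{2}$; in particular both radii are at most $\sqrt{2}$, so Lemma~\ref{lemma:vanishing} applies and tells us $f(\sqrt{2j}) = \ft f(\sqrt{2j}) = 0$ for every nonnegative integer $j$. The point is that this vanishing, combined with the strict inequality $r(f) < \sqrt{2}$, forces $f$ to be genuinely nonnegative and $\ft f$ genuinely nonnegative on all of $\R^{12}$, after which a short argument closes the case.

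First I would upgrade the sign information. Since $r(f) < \sqrt{2}$, the function $f$ has a fixed sign on $|x| \ge r(f)$, and that sign is nonnegative (by the definition of $\A_+(12)$, $f$ is eventually nonnegative). Thus $f(x) \ge 0$ for all $|x| \ge r(f)$. But $f$ could a priori be negative on the annulus $r(f) < \sqrt{2}$ is not the issue — the concern is the ball $|x| < r(f)$. Here is where I use that $\sqrt{2}$ lies strictly above $r(f)$: actually the cleaner route is to observe that $\ft f(0) = \int_{\R^{12}} f \le 0$, while $f(x) \ge 0$ for $|x| \ge r(f)$; combined with $f(0) = \ft f(0) \cdot(\text{something})$... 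Let me instead argue via Fourier positivity. We have $f(0) \le 0$ and $\ft f(0) \le 0$, so $\int f \le 0$ and $\int \ft f \le 0$. Apply Lemma~\ref{E6_poisson_summation} — or rather its consequence used in Lemma~\ref{lemma:vanishing} — to a rescaled copy $f_\lambda(x) = f(\lambda x)$ for $\lambda$ slightly less than $1$, chosen so that $r(f_\lambda) = r(f)/\lambda < \sqrt{2}$ still holds. The summation formula for $f_\lambda$ reads
\[
f(0) + \lambda^{-12}\ft f(0) = \sum_{j \ge 1} c_j f(\lambda\sqrt{2j}) + \lambda^{-12}\sum_{j \ge 1} c_j \ft f(\lambda\sqrt{2j}).
\]
The left side is $\le 0$; on the right, every term $f(\lambda\sqrt{2j})$ and $\ft f(\lambda\sqrt{2j})$ is $\ge 0$ provided $\lambda\sqrt{2j} \ge r(f)$ and $\lambda\sqrt{2j} \ge r(\ft f\,)$, which holds for all $j \ge 1$ once $\lambda$ is close enough to $1$ (since $\sqrt{2} > r(f) = r(\ft f\,)$). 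Hence the right side is $\ge 0$, forcing $f(0) = \ft f(0) = 0$ and $f(\lambda\sqrt{2j}) = \ft f(\lambda\sqrt{2j}) = 0$ for all $j \ge 1$ and all such $\lambda$. Since $\lambda$ ranges over a nontrivial interval, $f$ and $\ft f$ vanish on a nonempty open set of radii near each $\sqrt{2j}$, and in fact, letting $\lambda$ vary over $(r(f)/\sqrt{2}, 1]$, we conclude $f(t) = 0$ for all $t$ in the union of the intervals $[\sqrt{2j}\, r(f)/\sqrt{2}, \sqrt{2j}\,]$; for $j$ large these intervals overlap, so $f$ vanishes on an entire half-line $[R, \infty)$ for some $R$, and likewise $\ft f$.

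Once $f$ vanishes on $|x| \ge R$, it has compact support, and the same holds for $\ft f$; a nonzero $L^1$ function and its Fourier transform cannot both have compact support (the Fourier transform of a compactly supported function extends to an entire function, which cannot vanish on an open set unless it is identically zero). Therefore $f \equiv 0$, which is the desired conclusion. The main obstacle is the bookkeeping in the second paragraph: one must verify carefully that for $\lambda$ in a suitable left-neighborhood of $1$ the rescaled function $f_\lambda$ still lies in $\A_+(12)$ with $r(f_\lambda), r(\ft{f_\lambda}\,) < \sqrt{2}$, so that every sampled value has the right sign, and that the union of the resulting vanishing intervals genuinely covers a half-line — this uses that consecutive radii $\sqrt{2j}$ and $\sqrt{2(j+1)}$ grow closer together in ratio as $j \to \infty$, so that scaling each down by the fixed factor $r(f)/\sqrt{2} < 1$ eventually produces overlaps. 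Everything else is soft: the approximation by Schwartz functions is already handled inside Lemma~\ref{lemma:vanishing}, and the final compact-support contradiction is classical.
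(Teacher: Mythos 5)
Your proposal is correct and follows essentially the same route as the paper: rescale so both radii lie below $\sqrt{2}$, apply Lemma~\ref{lemma:vanishing} to the family of dilates $f_\lambda$ with $\lambda$ ranging over an interval, conclude that $f$ and $\ft f$ vanish on intervals around each $\sqrt{2j}$ that eventually overlap and cover a half-line, and finish with the compact-support contradiction. One small slip that does not affect the argument: for $f_\lambda(x)=f(\lambda x)$ the transform samples are $\ft{f_\lambda}(\sqrt{2j})=\lambda^{-12}\ft f\bigl(\sqrt{2j}/\lambda\bigr)$, so the vanishing you obtain for $\ft f$ is at the radii $\sqrt{2j}/\lambda$ rather than $\lambda\sqrt{2j}$, which still yields a covered half-line.
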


\begin{proof}
By rescaling the input to $f$, we can assume without loss of generality that
$r(f)$ and $r(\ft f\,)$ are both less than $\sqrt{2}$.  Now we apply
Lemma~\ref{lemma:vanishing} to a rescaled version of $f$.  Choose $\lambda>0$
and let $g(x) = f(\lambda x)$.  Then $\ft g(\xi) = \lambda^{-12} \ft
f(\xi/\lambda)$, and it follows that $g \in \A_+(12)$. Moreover, if $\lambda$
is close enough to $1$, then $r(g)$ and $r(\ft g)$ are both less than
$\sqrt{2}$.

By Lemma~\ref{lemma:vanishing}, if $\lambda$ is sufficiently close to $1$,
then $g(x) = 0$ whenever $|x|=\sqrt{2j}$ with $j \ge 1$.  Thus there exists
some $\lambda_0>1$ such that $f(x)=0$ whenever
$|x|\in(\sqrt{2j}/\lambda_0,\sqrt{2j}\lambda_0)$ and $j\geq 1$, and the same
holds for $\ft f$.  The union of these intervals covers the entire half-line
$[R,\infty)$ for some $R>0$, because
\[
\lim_{j \to \infty} \frac{\sqrt{2j+2}}{\sqrt{2j}} = 1.
\]
In other words, $f$ and $\ft f$ both have compact support, which implies that
$f=0$.
\end{proof}

Exactly the same technique applies to any dimension and sign:

\begin{proposition} \label{prop:lowerbound}
Let $s \in\{\pm 1\}$, $0 < \rho_0 < \rho_1 < \dotsb$ with
\[
\lim_{j \to \infty} \frac{\rho_{j+1}}{\rho_j} = 1,
\]
and $c_j > 0$ for $j \ge 0$.  If every radial Schwartz function $f \colon
\R^d \to \R$ satisfies the summation formula
\begin{equation} \label{eq:general-summation}
f(0) + s\ft f(0) = s\sum_{j \ge 0} c_j f(\rho_j) + \sum_{j \ge 0} c_j \ft f(\rho_j),
\end{equation}
then $\bA_s(d) \ge \rho_0$.
\end{proposition}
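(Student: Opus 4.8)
The plan is to mimic, in the abstract, the three-step argument already used above to establish $\bA_+(12)\ge\sqrt 2$: namely Lemma~\ref{E6_poisson_summation} (now replaced by the hypothesis \eqref{eq:general-summation}), Lemma~\ref{lemma:vanishing}, and the lemma immediately after it. Concretely, I would establish: (a) a vanishing statement, that every $f\in\A_s(d)$ with $r(f)\le\rho_0$ and $r(\ft f\,)\le\rho_0$ vanishes identically on the sphere $|x|=\rho_j$ for each $j\ge 0$, and the same for $\ft f$; (b) using (a) together with a dilation, that such an $f$ in fact vanishes on an open annulus of radii around each $\rho_j$; and (c) that these annuli cover a half-line, forcing $f$ and $\ft f$ to have compact support and hence $f\equiv 0$, so that $\bA_s(d)\ge\rho_0$.

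For step~(a) I would first reduce to radial $f$ by averaging over $O(d)$: the rotation average $\bar f$ again lies in $\A_s(d)$, with $r(\bar f)\le r(f)$ and $r(\widehat{\bar f}\,)\le r(\ft f\,)$, and since $\rho_j\ge\rho_0$ the functions $f$ and $\ft f$ have a fixed sign on the sphere of radius $\rho_j$, so vanishing of $\bar f$ and $\widehat{\bar f}$ there is equivalent to vanishing of $f$ and $\ft f$ there. For radial Schwartz $f$ I would feed $f$ into \eqref{eq:general-summation} and run the sign bookkeeping: when $s=1$ the left-hand side $f(0)+\ft f(0)$ is $\le 0$ while the right-hand side $\sum_{j\ge 0}c_j\bigl(f(\rho_j)+\ft f(\rho_j)\bigr)$ is $\ge 0$, so both sides and every summand vanish; when $s=-1$ one instead has $f(0)-\ft f(0)\ge 0$ while $\sum_{j\ge 0}c_j\bigl(\ft f(\rho_j)-f(\rho_j)\bigr)\le 0$ (now using $\ft f\le 0$ and $f\ge 0$ on those spheres), again forcing everything to vanish. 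For general $f\in\A_s(d)$ I would use the mollification $f_\varepsilon=(f*\varphi_\varepsilon)\,\ft\varphi_\varepsilon$ exactly as in the proof of Lemma~\ref{lemma:vanishing}: $f_\varepsilon$ is radial Schwartz, $f_\varepsilon\to f$ and $\ft f_\varepsilon\to\ft f$ uniformly, and $f_\varepsilon\ge 0$ for $|x|>r(f)+\varepsilon$ while $\ft f_\varepsilon$ has the requisite sign for $|x|>r(\ft f\,)+\varepsilon$; applying \eqref{eq:general-summation} to $f_\varepsilon$ with $\varepsilon<\rho_1-\rho_0$ and letting $\varepsilon\to 0$ — using Fatou's lemma to control the tail of the series — yields the vanishing.

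For steps~(b) and~(c), suppose $f\in\A_s(d)$ with $r(f)\,r(\ft f\,)<\rho_0^2$; I want to conclude $f\equiv 0$. Since $r(f)\,r(\ft f\,)$ is invariant under dilation and is strictly below $\rho_0^2$, a preliminary rescaling reduces to the case $r(f)<\rho_0$ and $r(\ft f\,)<\rho_0$. Then for every dilation parameter $\lambda$ in a small neighborhood of $1$ the dilate $g(x)=f(\lambda x)$ still lies in $\A_s(d)$, with $r(g)<\rho_0$ and $r(\ft g\,)<\rho_0$ (using $\ft g(\xi)=\lambda^{-d}\ft f(\xi/\lambda)$), so step~(a) gives $f(\lambda\rho_j)=0$ and $\ft f(\rho_j/\lambda)=0$ for all $j\ge 0$. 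Letting $\lambda$ vary, both $f$ and $\ft f$ vanish on an open interval of radii containing each $\rho_j$; since $\rho_{j+1}/\rho_j\to 1$ and $\rho_j\to\infty$, consecutive such intervals eventually overlap and their union contains a half-line $[R,\infty)$. Hence $f$ and $\ft f$ both have compact support, which is impossible unless $f\equiv 0$, because a compactly supported integrable function whose Fourier transform is also compactly supported vanishes identically. This gives $\bA_s(d)\ge\rho_0$.

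The one point that needs genuine care is step~(a) for non-Schwartz $f$: one must verify that the mollified functions $f_\varepsilon$ really are Schwartz (so that \eqref{eq:general-summation} applies), and that the limit $\varepsilon\to 0$ may legitimately be passed through the infinite sum on the right-hand side. But this is exactly what is carried out in the proof of Lemma~\ref{lemma:vanishing}, and it transfers to the present setting essentially verbatim; the only genuinely new ingredients are the sign bookkeeping for $s=-1$ and the use of the general ratio condition $\rho_{j+1}/\rho_j\to 1$ in the covering step, both of which are routine. So I do not expect a substantive obstacle — the proposition is just the natural common abstraction of the lemmas already proved.
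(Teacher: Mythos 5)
Your proposal is correct and is essentially the paper's own argument: the paper gives no separate proof of Proposition~\ref{prop:lowerbound}, stating only that ``exactly the same technique applies,'' namely the summation formula playing the role of Lemma~\ref{E6_poisson_summation}, the vanishing-plus-mollification step of Lemma~\ref{lemma:vanishing} (with your $s=-1$ sign bookkeeping being the only new wrinkle), and the dilation/covering lemma that follows it. Your one extra assumption, $\rho_j\to\infty$, is likewise implicit in the paper's covering step, so there is nothing to add.
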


For example, for $k\ge 2$, the summation formula coming from the Eisenstein
series $E_{2k}$ proves that $\bA_{(-1)^{k-1}}(4k) \ge \sqrt{2}$.  This lower
bound is sharp for $k=2$ and $k=3$, but it is not even true for $k=1$,
because $E_2$ is merely a quasimodular form.

The summation formula \eqref{eq:general-summation} automatically holds when
$\ft f = - s f$. Thus, it is equivalent to the assertion that
\begin{equation} \label{eq:general-summation2}
f(0) = s \sum_{j \ge 0} c_j f(\rho_j)
\end{equation}
holds whenever $\ft f = s f$.

\begin{conjecture} \label{conjecture:summation}
For each $s = \pm 1$ and $d \ge 1$ except perhaps $(s,d)=(1,1)$, there is a
summation formula that proves a sharp lower bound for $\bA_s(d)$ via
Proposition~\ref{prop:lowerbound}.
\end{conjecture}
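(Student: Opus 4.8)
The plan is to extract the desired summation formula from an extremal function for $\bA_s(d)$ and to match it against a Viazovska-style construction on the upper-bound side. Fix $s$ and $d$ with $(s,d)\neq(1,1)$, let $f_0$ be a radial function with $\ft f_0 = sf_0$, $f_0(0)=0$, and $r(f_0)=\bA_s(d)$ (such a function exists by Theorem~\ref{thm_facts_+-1}), and let $0 < \rho_0 < \rho_1 < \dotsb$ be the radii at which $f_0$ vanishes, so that $\rho_0 = \bA_s(d)$. By the remarks following Theorem~\ref{thm_facts_+-1}, this list is infinite. The first step is to pin down the multiplicity structure of these roots: a simple root at $\rho_0$ and double roots at $\rho_j$ for $j\ge 1$ (at each $\rho_j$ with $f_0$ nonnegative on both sides, the vanishing of $f_0$ forces a local minimum and hence an even-order root), together with the behavior at the origin forced by $f_0(0)=0$. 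This is exactly the pattern established unconditionally for $(s,d)=(1,12)$ in Theorem~\ref{thm_dimension12}, and it singles out $\{0\}\cup\{\rho_j : j\ge 0\}$ as the candidate node set of a Poisson-type summation formula.

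The second step is to realize this node set as the support of such a formula. Following the standard passage from modular forms to summation formulas used in the proof of Lemma~\ref{E6_poisson_summation} and in \cite{RV,CKMRV}, one wants a radial Fourier interpolation formula
\[
f = \sum_{j\ge 0}\big( f(\rho_j)\, a_j + f'(\rho_j)\, b_j \big) + s\sum_{j\ge 0}\big( \ft f(\rho_j)\, \ft{a_j} + (\ft f\,)'(\rho_j)\, \ft{b_j} \big)
\]
for radial Schwartz $f$, plus analogous terms at the origin, with the basis functions $a_j,b_j$ built from weakly holomorphic (and possibly quasimodular) modular forms of weight $d/2$ for a suitable congruence subgroup of $\mathrm{SL}_2(\Z)$ via Laplace-type integral transforms, in the spirit of the weight-$4$ and weight-$12$ forms used in dimensions $8$ and $24$ \cite{V,CKMRV} and the weight-$6$ form $E_6$ used here. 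The requirement that such a formula reproduce the Gaussians $e^{-\pi\alpha|x|^2}$ is equivalent to a summation formula of the shape \eqref{eq:general-summation}; from it one reads off the weights $c_j$ and, in particular, the weight $c_0$ attached to $\rho_0 = \bA_s(d)$. Proposition~\ref{prop:lowerbound} then converts the formula into the lower bound $\bA_s(d)\ge\rho_0$, and since $f_0$ itself saturates \eqref{eq:general-summation2} (giving $0 = s\sum_j c_j f_0(\rho_j) = 0$), one also recovers uniqueness of the extremal function.

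The decisive obstacle is the positivity of the weights $c_j$, together with the requirement that the chosen modular object be the \emph{optimal} one. Positivity is never formal: in dimension $12$ it is the positivity of the coefficients $504\,\sigma_5(j)$ of $E_6$, in dimension $8$ that of the coefficients of $\Theta_{E_8}$, and in a general dimension it asks for a weight-$d/2$ modular object whose relevant expansion coefficients are manifestly nonnegative. When $d$ is not a multiple of $8$ this forces half-integral weight with a theta multiplier, where no convenient Eisenstein series of the required shape is available, and for most $d$ one appears to need a quasimodular correction term — which is precisely why $(s,d)=(1,1)$, where $E_2$ already fails to be modular, must be excluded. Moreover, a suboptimal modular object (for example $\Theta_{E_8}$ in dimension $16$) yields a valid but non-sharp bound, so one must identify the form whose node set begins exactly at the value of $\bA_s(d)$ delivered by the matching upper-bound construction. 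A smaller technical point is verifying $\rho_{j+1}/\rho_j \to 1$, as Proposition~\ref{prop:lowerbound} requires; this should follow from the Bessel-type asymptotics $\rho_j\sim c_d\sqrt{j}$ for the roots of a radial Schwartz eigenfunction of the Fourier transform, but it depends on enough decay and regularity of $f_0$ to control its oscillation at large argument.

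In summary, I would arrange the proof so that the only genuinely $d$-dependent ingredient is the construction of a weight-$d/2$ (quasi)modular form whose associated summation formula has positive coefficients and innermost node equal to $\bA_s(d)$; everything else — the functional-analytic passage from the form to the summation formula, the invocation of Proposition~\ref{prop:lowerbound}, and the deduction of the root structure of $f_0$ — is soft and uniform in $d$. It is the existence and positivity of that modular object, not this packaging, that I expect to be the real difficulty, and it is exactly this step that is currently understood in only a handful of dimensions.
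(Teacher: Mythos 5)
The statement you were asked about is a \emph{conjecture}, and the paper does not prove it: it states explicitly that beyond the handful of exact cases ($\bA_-(1)$, $\bA_-(8)$, $\bA_+(12)$, $\bA_-(24)$, and conjecturally $\bA_-(2)$) the conjecture is not known to hold in a single instance, and that the authors cannot even guess what the summation formula should be. Your proposal is likewise not a proof but a programme, and you concede as much: the ``existence and positivity of that modular object'' which you defer is precisely the entire content of the conjecture, so nothing has been reduced. Moreover, the steps you describe as soft are not established either. Theorem~\ref{thm_facts_+-1} gives an extremizer vanishing at infinitely many radii, but not the multiplicity structure you assume (a simple root at $\rho_0$ and exact double roots beyond it): your local-minimum argument only shows the sign does not change at interior zeros, it gives no control of the order of vanishing, says nothing about $\rho_0$ itself, and an $L^1$ extremizer need not be regular enough at those radii for ``double root'' to be meaningful. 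The spacing condition $\rho_{j+1}/\rho_j \to 1$ required by Proposition~\ref{prop:lowerbound} is also unproven for the zero set of an extremizer, and there is no argument that the extremizer's roots are the nodes of \emph{any} identity valid for all radial Schwartz functions. Finally, the numerical evidence for $\bA_+(28)$ (Table~\ref{table:summation-formula} and Conjecture~\ref{conjecture:28}) shows nodes that are irregular perturbations of $\sqrt{2j+4}$ with no apparent arithmetic description, so the hoped-for weight-$d/2$ (quasi)modular form with manifestly positive coefficients cannot come from any currently known family; this is exactly why the statement is open.

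It is also worth noting that the paper's own partial progress toward the conjecture takes a different route from yours: Proposition~\ref{proposition:summation-formula} produces, for each fixed polynomial degree (equivalently each $k$), an exact finite summation formula with positive weights $c_0,\dots,c_k$ that certifies optimality of the degree-restricted problem, and the conjecture would follow if one could control the limit $k \to \infty$ of these formulas and show it has the form \eqref{eq:general-summation} with the required positivity and node spacing. If you want to push on this problem, analyzing that limiting process (compactness and non-degeneracy of the nodes and weights, convergence to an identity on all of $\Schw(\R^d)$) engages the actual structure of the evidence, whereas positing an explicit Eisenstein-like form in every dimension runs directly against the $d=28$ data. In its current form your writeup should be presented as a research plan identifying the open step, not as a proof.
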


In the case $s=-1$, this conjecture is analogous to \cite[Conjecture~4.2]{C}.
It holds in every case in which $\bA_s(d)$ is known exactly: the summation
formulas that establish sharp lower bounds for $\bA_{-}(1)$, $\bA_{-}(8)$,
and $\bA_{-}(24)$ are Poisson summation over the $\Z$, $E_8$, and Leech
lattices, respectively, while the $\bA_{+}(12)$ case is
Lemma~\ref{E6_poisson_summation}.  The conjectured value of $\bA_{-}(2)$
corresponds to Poisson summation over the isodual scaling of the $A_2$
lattice. Conjecture~\ref{conjecture:summation} is not known to hold in any
other case, nor can we guess what the summation formula should be, but the
numerical and theoretical evidence in favor of this conjecture is compelling
(see Sections~\ref{section:numerics} and~\ref{section:summation-formulas}).
In particular, in most cases we can compute the constants $c_j$ and $\rho_j$
in these conjectural summation formulas to high precision.

The coefficients $c_j$ are integers in the five exact cases listed above, but
integral coefficients seem to be rare, and it is plausible that no more such
cases exist.  One interesting example is the (conjectural) summation formula
that yields $\bA_{+}(28)$. It is natural to guess that $\bA_{+}(28)=2$, in
accordance with $\bA_{+}(12)=\bA_{-}(8)$ and $\bA_{-}(24)=2$, but in fact
$\bA_{+}(28) < 1.98540693489105$, and we conjecture that $\bA_{+}(28) =
1.985406934891049\ldots.$  (See Section~\ref{section:numerics} for a
discussion of our numerical methods.) In Table~\ref{table:summation-formula},
we approximate a conjectural summation formula that would establish this
equality, which we computed using the techniques of
Section~\ref{section:summation-formulas}.  We are unable to describe the
numbers $\rho_j$ and $c_j$ in the summation formula exactly, but we believe
that $\rho_j = \sqrt{2j+4+o(1)}$ as $j \to \infty$ (see
Conjecture~\ref{conjecture:28}) and $c_j = (24+o(1)) \sigma_{13}(j+2)$. The
latter equation says that $-c_j$ is asymptotic to the coefficient of
$e^{(2j+4)\pi i z}$ in the Fourier expansion
\begin{align*}
E_{14}(z) = 1 & - 24e^{2\pi i z} - 196632e^{4\pi i z} - 38263776e^{6\pi i z}
- 1610809368e^{8\pi i z}\\
& \phantom{} - 29296875024e^{10\pi i z} - 313495116768e^{12\pi i z}
- 2325336249792e^{14\pi i z}\\
& \phantom{} - 13195750342680e^{16\pi i z} - \cdots
\end{align*}
of the Eisenstein series $E_{14}$, and indeed these coefficients are close to
those in the table.  Note that the difference between the role of $E_{14}$
here and that of $E_6$ when $d=12$ is that the summation formula for $d=28$
suppresses the $-24 e^{2\pi i z}$ term in $E_{14}(z)$ at the cost of
perturbing all the remaining numbers.

\begin{table}
\caption{Summation formula that would prove $\bA_{+}(28) \ge 1.985406934891049\ldots.$
We conjecture that there exists a formula of the form \eqref{eq:general-summation} in $\R^{28}$
that agrees with all the digits listed in this table and proves a sharp lower bound for $\bA_{+}(28)$.}
\label{table:summation-formula}
\begin{tabular}{ccc}
\toprule
$j$ & $\rho_j\phantom{\ldots}$ & $c_j\phantom{\ldots}$\\
\midrule
$0$ & $1.985406934891049\ldots$ & $173693.2739265496\ldots$\\
$1$ & $2.448204775489784\ldots$ & $38022505.25862595\ldots$\\
$2$ & $2.828451453989980\ldots$ & $1612404204.870089\ldots$\\
$3$ & $3.162301096885930\ldots$ & $29295881893.82392\ldots$\\
$4$ & $3.464102777388629\ldots$ & $313503500519.3102\ldots$\\
$5$ & $3.741654846843136\ldots$ & $2325238355388.562\ldots$\\
$6$ & $3.999999847797149\ldots$ & $13196060863066.90\ldots$\\
$\vdots$ & $\vdots\phantom{\ldots}$ & $\vdots\phantom{\ldots}$\\
\bottomrule
\end{tabular}
\end{table}

\subsection{Theta series and an extremal function in dimension $12$}\label{construction_section}

To prove the upper bound $\bA_+(12) \le \sqrt{2}$, we will construct an
explicit function $f \in \A_+(12)$ satisfying $\ft f = f$, $f(0)=0$, and
$r(f) = \sqrt{2}$.  To do so, we will use a remarkable integral transform
discovered by Viazovska that turns modular forms into radial eigenfunctions
of the Fourier transform. See \cite{Z} for background on modular forms, and
\cite{V,CKMRV,CKMRV2} for other applications of this transform.

Viazovska's method can be summarized by the following proposition, which is
implicit in \cite{V} but was stated there only for a specific modular form
with $d=8$ (and similarly for $d=24$ in \cite{CKMRV}). We omit the proof,
because it closely follows the same approach as \cite[Propositions~5
and~6]{V} and \cite[Lemma~3.1]{CKMRV}.  All that needs to be checked is the
dependence on the dimension $d$.

\begin{proposition} \label{prop:psi}
Let $d$ be a positive multiple of $4$, and let $\psi$ be a weakly holomorphic
modular form of weight $2-d/2$ for $\Gamma(2)$ such that
\[
z^{d/2-2}\psi(-1/z) + \psi(z+1) = \psi(z)
\]
for all $z$ in the upper half-plane, $t^{d/2-2}\psi(i/t) \to 0$ as $t \to
\infty$, and $|\psi(it)| = O\big(e^{K\pi t}\big)$ as $t \to \infty$ for some
constant $K$. Define a radial function $f \colon \R^d \to \R$ by
\begin{equation}
\label{eq:f-def}
\begin{split}
f(x) & =
\frac{i}{4}\int_{-1}^i \psi(z+1) e^{\pi i |x|^2 z} \, dz + \frac{i}{4}\int_{1}^i \psi(z-1) e^{\pi i |x|^2 z} \, dz\\
& \quad \phantom{}  - \frac{i}{2}\int_{0}^i \psi(z) e^{\pi i |x|^2 z} \, dz
 - \frac{i}{2} \int_i^{i\infty} z^{d/2-2}\psi(-1/z) e^{\pi i |x|^2 z} \, dz.
\end{split}
\end{equation}
Then $f$ is a Schwartz function and an eigenfunction of the Fourier transform
with eigenvalue $(-1)^{1+d/4}$.  Furthermore,
\[
f(x) = \sin\mathopen{}\big(\pi |x|^2/2\big)^2\mathclose{} \int_0^\infty \psi(it) e^{-\pi |x|^2 t}\, dt
\]
whenever $|x|^2 > K$.
\end{proposition}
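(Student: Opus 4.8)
The plan is to mirror Viazovska's argument in \cite[Propositions~5 and~6]{V} (see also \cite[Lemma~3.1]{CKMRV}), tracking throughout how the computation depends on~$d$. The only inputs in which the dimension appears are the Fourier transform of the Gaussian --- $x\mapsto e^{\pi i|x|^2 z}$ has Fourier transform $\xi\mapsto (z/i)^{-d/2}e^{\pi i|\xi|^2(-1/z)}$ for $z\in\h$, with the principal branch --- the weight $2-d/2$ of~$\psi$, and the cusp hypotheses, which feature the exponent $d/2-2$. The first, routine step is to check that each of the four integrals in \eqref{eq:f-def} converges and that the resulting function extends to a holomorphic function of $s=|x|^2$ on a half-plane $\Re s>-\delta$, so that $f$ is real-analytic in $|x|^2$ on $[0,\infty)$ and one may differentiate under the integral sign. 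The three integrals over compact contours are harmless because $\psi$ is holomorphic on~$\h$; for the tail $\int_i^{i\infty}z^{d/2-2}\psi(-1/z)e^{\pi i|x|^2 z}\,dz$, the hypothesis $t^{d/2-2}\psi(i/t)\to 0$ says precisely that the Fourier expansion at $i\infty$ of the slashed form $z\mapsto z^{d/2-2}\psi(-1/z)$ has no constant or polar term, so that $t^{d/2-2}\psi(i/t)=O(e^{-c\pi t})$ for some $c>0$ and the integral converges absolutely, along with all of its $s$-derivatives. Real-valuedness of $f$, which is part of the statement, follows from Schwarz reflection once $\psi$ has real Fourier coefficients, as it does in all the cases where we apply this.

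Next I would establish the closed form $f(x)=\sin^2\!\big(\pi|x|^2/2\big)\int_0^\infty\psi(it)e^{-\pi|x|^2 t}\,dt$ for $|x|^2>K$. Substituting $z\mapsto z\mp 1$ in the first two integrals turns $\psi(z\pm1)$ into $\psi(z)$ at the cost of scalar factors $e^{\mp\pi i|x|^2}$; the functional equation rewrites the tail as $\int_i^{i\infty}\big(\psi(z)-\psi(z+1)\big)e^{\pi i|x|^2 z}\,dz$; and then one deforms all the resulting contours onto the positive imaginary axis, using Cauchy's theorem together with the decay above and the periodicity $\psi(z+2)=\psi(z)$. The horizontal segments at imaginary part~$1$ cancel, the segment along $[0,i]$ picks up the combined coefficient $\tfrac{i}{4}e^{-\pi i|x|^2}+\tfrac{i}{4}e^{\pi i|x|^2}-\tfrac{i}{2}=-i\sin^2(\pi|x|^2/2)$, and parametrizing $z=it$ yields the formula. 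This representation then gives rapid decay of $f$ and all its derivatives: for $t\ge 1$ the integral is $O\big(e^{-\pi(|x|^2-K)}\big)$ by the bound $|\psi(it)|=O(e^{K\pi t})$, while substituting $t\mapsto 1/t$ and using $\psi(i/t)=O\big(t^{2-d/2}e^{-c\pi t}\big)$ shows that the contribution of $0<t\le 1$ is exponentially small in $|x|$. Combined with the real-analyticity near the origin, this shows $f\in\Schw(\R^d)$.

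For the eigenfunction identity I would apply the Fourier transform to \eqref{eq:f-def}, interchanging it with each integral (justified by the absolute convergence and the Gaussian decay), so that $e^{\pi i|x|^2 z}$ becomes $(z/i)^{-d/2}e^{\pi i|\xi|^2(-1/z)}$, and then substitute $z\mapsto -1/z$ in each of the four integrals. This sends the endpoints $i,0,\pm1,i\infty$ to $i,i\infty,\mp1,0$, contributes a Jacobian $z^{-2}\,dz$ together with a factor $(-i)^{d/2}z^{d/2-2}$ from the Gaussian term, and replaces the arguments of $\psi$ by expressions such as $-1/z$ and $-1/(z\pm1)$. Rewriting these through the functional equation $z^{d/2-2}\psi(-1/z)+\psi(z+1)=\psi(z)$ and the periodicity $\psi(z+2)=\psi(z)$ --- so that, for instance, $(-i)^{d/2}z^{d/2-2}\psi(-1/z)$ becomes $(-i)^{d/2}\big(\psi(z)-\psi(z+1)\big)$ --- one reassembles the pieces into exactly $(-1)^{1+d/4}f(\xi)$. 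The constant is $(-i)^{d/2}=(-1)^{d/4}$, corrected by an extra sign arising from how the four contours recombine.

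I expect the main obstacle to be precisely this last reassembly, together with the contour manipulations in the previous step: there is no new idea beyond \cite{V,CKMRV}, but one has to keep careful track of the branch of $z^{-d/2}$, the orientation of every transformed contour, and the exact way the $\Gamma(2)$ transformation laws combine, in order to verify that the $d$-dependent factors conspire to produce $(-1)^{1+d/4}$ rather than some other root of unity and that no boundary terms are left over. Since the analogous computations appear in detail for $d=8$ in \cite{V} and for $d=24$ in \cite{CKMRV}, isolating the role of~$d$ is routine but delicate.
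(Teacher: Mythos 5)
Your proposal follows exactly the route the paper intends: the paper omits the proof of Proposition~\ref{prop:psi}, stating that it follows \cite[Propositions~5 and~6]{V} and \cite[Lemma~3.1]{CKMRV} with only the $d$-dependence to be checked, and your sketch reproduces that argument with the dimension tracked correctly (the Gaussian factor $(z/i)^{-d/2}$, the weight $2-d/2$, the cusp exponent $d/2-2$, the coefficient $-i\sin^2(\pi|x|^2/2)$, and the constant $(-i)^{d/2}=(-1)^{d/4}$ up to the overall sign). The remaining imprecisions you flag (contour bookkeeping, the extra sign giving $(-1)^{1+d/4}$) are exactly the routine-but-delicate details the paper defers to those references, so the approach and its execution plan match the paper's.
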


Viazovska in fact developed two such techniques, one for each eigenvalue, and
both are used in the sphere packing papers \cite{V,CKMRV}. We will not need
the other technique, which yields eigenvalue $(-1)^{d/4}$ instead of
$(-1)^{1+d/4}$ and uses a weakly holomorphic quasimodular form of weight
$4-d/2$ and depth $2$ for $\mathrm{SL}_2(\Z)$.

When applying Proposition~\ref{prop:psi}, we will use the notation
\begin{flalign*}
&& \Theta_{00}(z) & = \sum_{n \in \Z} e^{\pi i n^2 z}, &&\\
&& \Theta_{01}(z) & = \sum_{n \in \Z} (-1)^n e^{\pi i n^2 z}, &&\\
&\text{and}\hidewidth\\
&& \Theta_{10}(z) & = \sum_{n \in \Z} e^{\pi i (n+1/2)^2 z} &&
\end{flalign*}
for theta functions from \cite{V,CKMRV}.  Their fourth powers
$\Theta_{00}^4$, $\Theta_{01}^4$, and $\Theta_{10}^4$ are modular forms of
weight $2$ for $\Gamma(2)$, which satisfy the Jacobi identity $\Theta_{00}^4
= \Theta_{01}^4 + \Theta_{10}^4$ and the transformation laws
\begin{alignat*}{3}
\Theta_{00}(z+1)^4 &= \Theta_{01}(z)^4, &\qquad z^{-2}\Theta_{00}(-1/z)^4 &= -\Theta_{00}(z)^4,\\
\Theta_{01}(z+1)^4 &= \Theta_{00}(z)^4, &\qquad z^{-2}\Theta_{01}(-1/z)^4 &= -\Theta_{10}(z)^4,\\
\Theta_{10}(z+1)^4 &= -\Theta_{10}(z)^4, &\qquad z^{-2}\Theta_{10} (-1/z)^4 &= -\Theta_{01}(z)^4
\end{alignat*}
under the action of $\mathrm{SL}_2(\Z)$. We will also use the modular form
$\Delta$, defined by
\[
\Delta(z) = e^{2\pi i z} \prod_{n=1}^\infty (1-e^{2\pi i n z})^{24}.
\]
It is a modular form of weight $12$ for the group $\mathrm{SL}_2(\Z)$, which
contains $\Gamma(2)$; thus $\Delta(z+1) = \Delta(z)$ and $z^{-12}
\Delta(-1/z) = \Delta(z)$.

Using these ingredients, we will now construct a suitable modular form for
use in Proposition~\ref{prop:psi}, to prove Theorem~\ref{thm_dimension12}.
Let
\begin{equation}
\label{eq:psi-def}
\psi = \frac{\big(\Theta_{00}^4 + \Theta_{10}^4\big)\Theta_{01}^{12}}{\Delta}.
\end{equation}
(We discuss the motivation for this definition at the end of this section.)
Then $\psi$ is a weakly holomorphic modular form of weight $4\cdot2 - 12
=-4$, and the identity
\[
z^{4}\psi(-1/z) + \psi(z+1) = \psi(z)
\]
can be checked using the formulas listed above. (Note that $\psi$ is weakly
holomorphic because the product formula shows that $\Delta$ does not vanish
in the upper half-plane.)

Using the definitions for $\Theta_{00}$, $\Theta_{01}$, $\Theta_{10}$, and
$\Delta$ given above, we can compute the Fourier series
\begin{equation}
\label{eq:fourier}
\psi(z) = e^{-2\pi i z} - 264 + 4096 e^{\pi i z} - 36828 e^{2\pi i z} + 245760 e^{3\pi i z} + \cdots.
\end{equation}
This series is absolutely convergent in the upper half-plane, and thus
$|\psi(it)| = O\big(e^{2\pi t}\big)$ as $t \to \infty$.  Using the
transformation laws again, we find that
\begin{align*}
z^4 \psi(-1/z) &= \frac{\big(\Theta_{00}(z)^4 + \Theta_{01}(z)^4\big)\Theta_{10}(z)^{12}}{\Delta(z)}\\
&= 8192 e^{\pi i z} + 491520 e^{3\pi i z} + 12828672 e^{5 \pi i z} + \cdots.
\end{align*}
In particular, $|t^4 \psi(i/t)| = O\big(e^{-\pi t}\big)$ as $t \to \infty$.

Thus, $\psi$ satisfies the hypotheses of Proposition~\ref{prop:psi} with
$d=12$ and $K=2$.  Define $f \colon \R^{12} \to \R$ by \eqref{eq:f-def}. Then
$f$ is a radial Schwartz function satisfying $\ft f = f$ and
\begin{equation}
\label{eq:f-real}
f(x) = \sin\mathopen{}\big(\pi |x|^2/2\big)^2\mathclose{} \int_0^\infty \psi(it) e^{-\pi |x|^2 t}\, dt
\end{equation}
for $|x| > \sqrt{2}$.

It follows from \eqref{eq:psi-def} that
\begin{equation} \label{eq:positive}
\psi(it) > 0
\end{equation}
for all $t>0$, because $\Theta_{00}(it)$, $\Theta_{01}(it)$, and
$\Theta_{10}(it)$ are all real, while $0 < \Delta(it) < 1$.  Thus,
\eqref{eq:f-real} implies that $f(x) \ge 0$ for $|x| > \sqrt{2}$, with double
roots at $|x| = \sqrt{2j}$ for integers $j\ge 2$ and no other roots in this
range.

For comparison, the quasimodular form inequalities that play the same role as
\eqref{eq:positive} in \cite{V} and \cite{CKMRV} are obtained via
computer-assisted proofs. The reason for this discrepancy is that those
proofs combine $+1$ and $-1$ eigenfunctions, which introduces technical
difficulties. If all one wishes to prove is that $\bA_-(8) = \sqrt{2}$ and
$\bA_-(24)=2$, then one can avoid computer assistance.  Specifically, the
formula (3.1) in \cite{CKMRV} is visibly positive in the same sense as our
formula \eqref{eq:psi-def}, and while that is not true for formula (46) in
\cite{V}, it can be rewritten so as to be visibly positive (see, for example,
the corresponding formula in \cite{C2}).

To analyze the behavior of $f(x)$ with $0 \le |x| \le \sqrt{2}$, we can
simply cancel the growth of $\psi(it)$.  The series \eqref{eq:fourier} shows
that
\[
\psi(it) = e^{2\pi t} - 264 + O\big(e^{-\pi t}\big)
\]
as $t \to \infty$.  For $|x| > \sqrt{2}$, we obtain the new formula
\[
f(x) = \sin\mathopen{}\big(\pi |x|^2/2\big)^2\mathclose{} \left(\frac{528-263|x|^2}{\pi |x|^2 (|x|^2-2)} + \int_0^\infty \big(\psi(it)-e^{2\pi t} + 264\big) e^{-\pi |x|^2 t}\, dt\right)
\]
from \eqref{eq:f-real}, and the integral in this formula now converges for
all $x$. It follows from \eqref{eq:f-def} that $f(x)$ is a holomorphic
function of $|x|$; thus, the new formula must agree with the old one for all
$x$ by analytic continuation.

The term
\[
\sin\mathopen{}\big(\pi |x|^2/2\big)^2\mathclose{} \int_0^\infty \big(\psi(it)-e^{2\pi t} + 264\big) e^{-\pi |x|^2 t}\, dt
\]
vanishes to second order at $|x|=\sqrt{2j}$ for all $j \ge 1$, and to fourth
order at the origin.  Thus, $f(x)$ must agree with
\[
\sin\mathopen{}\big(\pi |x|^2/2\big)^2\mathclose{} \left(\frac{528-263|x|^2}{\pi |x|^2 (|x|^2-2)} \right)
\]
to second order at $|x|=\sqrt{2}$ and to fourth order at the origin, and so
$f(x)$ has a single root at $|x|=\sqrt{2}$ and a double root at the origin.
More specifically,
\[
f(x) = \frac{\pi}{\sqrt{2}}(|x|-\sqrt{2}) + O\big((|x|-\sqrt{2})^2\big)
\]
as $|x| \to \sqrt{2}$, and
\[
f(x) = -66\pi|x|^2 + O\big(|x|^4\big)
\]
as $x \to 0$.

In particular, $f(0)=0$.  It follows that $f \in \A_+(12)$, and therefore
$\bA_+(12) \le \sqrt{2}$, as desired.  We have now proved all of the
assertions from Theorem~\ref{thm_dimension12}.

As the quadratic term $-66\pi|x|^2$ suggests, our construction of $f$ is
scaled so that it values are rather large.  For example, its minimum value
appears to be $f(x) \approx -23.8088$, achieved when $|x| \approx 0.557391$.
In Figure~\ref{figure:dim12}, we have plotted a more moderate scaling of this
function.

To arrive at the definition \eqref{eq:psi-def} of $\psi$, we began with the
Ansatz that $\psi \Delta$ should be a holomorphic modular form of weight $8$
for $\Gamma(2)$. Equivalently, it should be a linear combination of
$\Theta_{00}^{16}$, $\Theta_{00}^{12} \Theta_{01}^4$, $\Theta_{00}^8
\Theta_{01}^8$, $\Theta_{00}^4 \Theta_{01}^{12}$, and $\Theta_{01}^{16}$.
Imposing the constraint $z^{4}\psi(-1/z) + \psi(z+1) = \psi(z)$ eliminates
three degrees of freedom, which leaves just one degree of freedom, up to
scaling. The remaining constraint is that the coefficient of $e^{-\pi i z}$
in the Fourier expansion of $\psi(z)$ must vanish, and then $\psi$ is
determined modulo scaling. Finally, we rewrote the formula for $\psi$ to make
it visibly positive.

\section{The $-1$ eigenfunction uncertainty principle}
\label{section:-1uncertainty}

This section is devoted to the proof of Theorem~\ref{thm_facts_+-1}. We deal
only with the $-1$ case, because all the assertions in this theorem were
already proved in \cite{GOS} for the $+1$ case. First, we reduce determining
$\bA_-(d)$ to solving Problem~\ref{-1problem}.

\begin{lemma}\label{reduction_lemma}
For each $f\in \A_-(d)\setminus\{0\}$, there exists a radial function $g\in
\A_-(d)\setminus\{0\}$ such that $\ft g =-g$, $g(0)=0$, and $r(g) \leq
\sqrt{r(f)r(\ft f\,)}$.
\end{lemma}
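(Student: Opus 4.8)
The plan is a four-step reduction: symmetrize $f$ to a radial function, rescale so that $r(f)=r(\ft f\,)$, pass to the $-1$ eigenfunction $g_0=f-\ft f$, and then correct the value at the origin to $0$ without enlarging the last sign change. First I would check that $r(f)$ and $r(\ft f\,)$ both lie in $(0,\infty)$: finiteness is part of the hypothesis that $f$ and $\ft f$ are eventually of one sign, and if $r(f)=0$ then $f\ge 0$ everywhere, so $\ft f(0)=\int_{\R^d}f\ge 0$; together with $\ft f(0)\le 0$ and continuity this forces $f\equiv 0$, a contradiction, and the case $r(\ft f\,)=0$ is symmetric.

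Next I would average $f$ over rotations about the origin. The averaged function lies in $\A_-(d)$, its two sign-change radii do not exceed those of $f$, and it is not identically zero: otherwise, since $f$ and $\ft f$ are continuous and each eventually of a single sign, the vanishing of their rotation averages would force both $f$ and $\ft f$ to be supported in a ball (a nonnegative continuous function with zero average over each large sphere vanishes off a ball), hence $f\equiv 0$. So I may assume $f$ is radial; then, rescaling the input by $\lambda=\sqrt{r(f)/r(\ft f\,)}$, I may further assume $r(f)=r(\ft f\,)=r$, where $r=\sqrt{r(f)r(\ft f\,)}$ for the original $f$, since this rescaling preserves $\A_-(d)$ and the product of the two radii.

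Now set $g_0=f-\ft f$. Because $f$ is even we have $\ft{\ft f}=f$, so $g_0\in L^1(\R^d)$ with $\ft{g_0}=\ft f-f=-g_0\in L^1(\R^d)$, and $g_0$ is real and radial; moreover $g_0(0)=f(0)-\ft f(0)\ge 0$, and for $|x|\ge r$ we have $f(x)\ge 0$ and $\ft f(x)\le 0$, hence $g_0(x)\ge 0$ and $r(g_0)\le r$. Also $g_0\not\equiv 0$, since otherwise $f=\ft f$ would be simultaneously eventually nonnegative and eventually nonpositive, hence compactly supported, and $\ft f=f$ would be too, forcing $f\equiv 0$.

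The one substantial point is that $g_0(0)$ may be strictly positive while Problem~\ref{-1problem} demands the value $0$ at the origin; I would remove this by subtracting a small multiple of an explicit $-1$ eigenfunction whose last sign change is close to the origin. For $0<a<1$ let $h_a(x)=e^{-\pi a|x|^2}-a^{-d/2}e^{-\pi|x|^2/a}$, which has the form $\phi-\ft\phi$ for the Gaussian $\phi(x)=e^{-\pi a|x|^2}$ and is therefore a radial Schwartz function with $\ft{h_a}=-h_a$; here $h_a(0)=1-a^{-d/2}<0$, and since the two Gaussians cross exactly once, $h_a(x)\ge 0$ exactly for $|x|^2\ge \tfrac{(d/2)\log(1/a)}{\pi(a^{-1}-a)}$, a threshold that tends to $0$ as $a\to 0^+$. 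So I may choose $a$ with $r(h_a)<r$ and then take $g=g_0+t\,h_a$ with $t=g_0(0)/(a^{-d/2}-1)>0$, or $g=g_0$ if $g_0(0)=0$. This $g$ is radial with $\ft g=-g$ and $g(0)=0$; for $|x|\ge r$ both summands are nonnegative, so $r(g)\le r$; and $g\not\equiv 0$, since $g\equiv 0$ would give $g_0=-t\,h_a$, which is nonnegative for large $|x|$ because $r(g_0)\le r$ yet equals $-t\,h_a<0$ for large $|x|$ because $h_a$ is eventually positive and $t>0$ (and if $t=0$ then $g=g_0\not\equiv 0$). Finally, from $g(0)=\ft g(0)=0$ one reads off that $g\in\A_-(d)$, and $r(g)\le r=\sqrt{r(f)r(\ft f\,)}$, which completes the argument. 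The only step needing an idea is the construction of $h_a$; everything else is routine verification.
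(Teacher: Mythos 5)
Your proof is correct and follows essentially the same route as the paper: radialize, rescale so the two radii agree, pass to $f-\ft f$, and then cancel the value at the origin by adding a multiple of an explicit radial $-1$ eigenfunction that is negative at $0$ and nonnegative outside a small ball. The only difference is cosmetic: you use $\phi-\ft\phi$ for a single Gaussian $\phi$, while the paper uses $\varphi_t-\ft\varphi_t$ with $\varphi_t$ a normalized difference of two Gaussians as in \eqref{eq:phi-t}, and both choices do the same job.
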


\begin{proof}
If $f$ is not radial, then we average its rotations about the origin to
obtain a radial function without increasing $r(f)$ or $r(\ft f\,)$. Thus, we
can assume that $f$ is radial.  Note that this process cannot lead to the
zero function: if it did, then $f$ and $\ft f$ would both have compact
support and hence vanish identically.

The quantity $r(f)r(\ft f\,)$ is unchanged if we replace $f$ with $x \mapsto
f(\lambda x)$ for some $\lambda>0$. Thus, we can assume that $r(f)=r(\ft
f\,)$. Letting $g=f-\ft f$ we deduce that $g\in \A_-(d)$, $\ft g=-g$, and
$r(g)\leq r(f)$.  Again, $g$ cannot vanish identically, because $f$ and $-\ft
f$ are eventually nonnegative and would thus both have to have compact
support.

It remains to force $g(0)=0$, since a priori we can have $g(0)>0$. For $t>0$,
consider the auxiliary function
\begin{equation} \label{eq:phi-t}
\varphi_t(x) = \frac{e^{-t\pi|x|^2} - e^{-2t\pi|x|^2}}{t^{-d/2} -
(2t)^{-d/2}}.
\end{equation}
Then $\varphi_t\geq 0$, $\varphi_t(0)=0$, $\ft \varphi_t(0)=1$, and $\ft
\varphi_t(x) <0$ if $|x|^2\geq td\log(2)/\pi$. Choosing $t>0$ so that
$\sqrt{td\log(2)/\pi} = r(g)$, we deduce that the function $h = g  +
g(0)(\varphi_t -\ft \varphi_t)$ belongs to $\A_-(d)$, $\ft h = -h$, $h(0)=0$,
and $r(h)\leq r(g)$.  Finally, if $g(0)>0$, then $h(x) > g(x)$ for all
sufficiently large $x$, and thus $h$ is not the zero function.
\end{proof}

\subsection{Lower and upper bounds}

To obtain a lower bound for $\bA_-(d)$, we follow \cite{BCK,GOS}. Let $g\in
\A_-(d)\setminus\{0\}$ be a radial function satisfying $\ft g=-g$ and
$g(0)=0$, and assume without loss of generality that $\|g\|_1 = 1$.

Let $g^+ = \max\{g,0\}$ and $g^- = \max\{-g,0\}$, so that $g^+,g^- \ge 0$,
these functions are never positive at the same point, and $g=g^+-g^-$. Since
$\ft g(0)=0$,
\[
\int_{\R^d} g^+ = \int_{\R^d} g^-.
\]
Furthermore,
\[
\int_{\R^d} g^- = \int_{B^d_{r(g)}} g^-,
\]
where $B^d_{r(g)}$ is a $d$-dimensional ball of radius $r(g)$ and centered at
the origin, because $\{x\in \R^d:g(x)< 0\}\subseteq B^d_{r(g)}$. It follows
that
\[
 \int_{B^d_{r(g)}} g^- = 1/2,
\]
because $\|g\|_{1}=1$. Thus,
\begin{align*}
1/2 &\leq \vol\mathopen{}\big(B^d_1\big)\mathclose{}r(g)^d \|g\|_{\infty}\\
& \leq \vol\mathopen{}\big(B^d_1\big)\mathclose{}r(g)^d \|\ft g\|_{1}\\
& = \vol\mathopen{}\big(B^d_1\big)\mathclose{}r(g)^d \|g\|_{1} \\
& = \vol\mathopen{}\big(B^d_1\big)\mathclose{}r(g)^d,
\end{align*}
and we conclude that
\begin{equation}
\label{lowerbound-1} \bA_-(d) \geq \left(\frac{1}{2 \vol\mathopen{}\big(B^d_1\big)\mathclose{}}\right)^{1/d} =
\frac{\Gamma(d/2+1)^{1/d}}{2^{1/d}\sqrt{\pi}} > \sqrt{\frac{d}{2\pi e}}.
\end{equation}

Next we prove an upper bound for $\bA_-(d)$. Let
\[
L_n^\nu(z) = \sum_{j=0}^n \binom {n+\nu}{n-j} \frac{(-z)^j}{j!}
\]
be the generalized Laguerre polynomial of degree $n$ with parameter $\nu>-1$.
When $\nu = d/2-1$, the functions $\psi_n^\nu \colon \R^d \to \R$ defined by
\begin{equation}
\label{def:phi-nu-n}
\psi_n^\nu(x) = L_n^\nu(2\pi |x|^2)e^{-\pi|x|^2}
\end{equation}
form a orthogonal basis for the space of radial functions in $L^2(\R^d)$, and
they are eigenfunctions for the Fourier transform:
\[
\ft \psi_n = (-1)^n\psi_n.
\]
(See, for example, Lemma~10 in \cite{GOS}.)

Let
\begin{align*}
p(z) &= L^\nu_1(z)L^\nu_3(0) - L^\nu_3(z)L^\nu_1(0)\\
& =
\frac{(1+\nu)}{6}z\left({2(3+\nu)(2+\nu)}  - 3(3+\nu)z + {z^2}\right).
\end{align*}
The roots of this polynomial are $0$ and
\[
\frac{3\nu+9\pm \sqrt{33+14\nu+\nu^2}}{2},
\]
and it is positive beyond the largest of these roots. If $\nu=d/2-1$, then
the largest root takes the form
\[
\frac{3d/2+6 + \sqrt{20+6d+d^2/4}}{2}.
\]
Now the function $g \colon \R^d \to \R$ defined by
\begin{align*}
g(x) & = \psi_1^\nu(x)\psi_3^\nu(0) - \psi_3^\nu(x)\psi_1^\nu(0)\\
& = p(2\pi|x|^2) e^{-\pi|x|^2}
\end{align*}
is radial, belongs to $\A_-(d)$, and satisfies $\ft g = - g$ and $g(0)=0$.
Hence
\begin{equation}
\label{upperbound-1} \bA_-(d) \leq \sqrt{\frac{3d/2+6 +
\sqrt{20+6d+d^2/4}}{4\pi}}  = \big(1+O(d^{-1/2})\big)\sqrt{\frac{d}{2\pi}}.
\end{equation}

Estimates \eqref{lowerbound-1} and \eqref{upperbound-1} imply that
$\bA_-(d)/\sqrt{d}$ is bounded above and below by positive constants, as
desired.  In particular, the lower bound is $1/\sqrt{2\pi e}$, and the upper
bound is at most $1$ except for $d=1$, in which case we can use $\bA_-(1)=1$
to obtain an upper bound of $1$.

We believe that the upper bound \eqref{upperbound-1} cannot be improved if we
replace $p$ with any polynomial of bounded degree, in the following sense.
For $N \ge 3$ and $s = \pm 1$, let $\bA_{s,N}(d)$ be the infimum of $r(g)$
over all nonzero $g \colon \R^d \to \R$ such that $\ft g = sg$, $g(0)=0$, and
$g$ is of the form
\[
g(x) = p(2\pi |x|^2) e^{-\pi |x|^2},
\]
where $p$ is a polynomial of degree at most $N$.  (The restriction to $N \ge
3$ ensures that such a function exists.)

\begin{conjecture} \label{conj:fixeddegree}
For fixed $N \ge 3$ and $s = \pm 1$,
\[
\lim_{d \to \infty} \frac{\bA_{s,N}(d)}{\sqrt{d}} = \frac{1}{\sqrt{2\pi}}.
\]
\end{conjecture}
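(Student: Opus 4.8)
The plan is to prove the two bounds separately; the upper bound is elementary, and essentially all the content is in the lower bound. Since $\bA_{s,N}(d)$ is nonincreasing in $N$, for the upper bound it suffices to treat $N=3$. Set $\nu=d/2-1$. When $s=-1$, the function $g(x)=p(2\pi|x|^2)e^{-\pi|x|^2}$ from \eqref{upperbound-1}, with $p(z)=L_1^\nu(z)L_3^\nu(0)-L_3^\nu(z)L_1^\nu(0)$, is admissible, and its largest root $\tfrac{1}{2}\bigl(3\nu+9+\sqrt{\nu^2+14\nu+33}\bigr)$ equals $d+O(1)$. When $s=+1$, take instead $g=\psi_2^\nu\psi_0^\nu(0)-\psi_0^\nu\psi_2^\nu(0)$, which satisfies $\ft g=g$ and $g(0)=0$ and whose polynomial $p(z)=\tfrac{1}{2}z\bigl(z-2(\nu+2)\bigr)$ has largest root $d+2$. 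In either case $\bA_{s,N}(d)\le r(g)=\sqrt{(d+O(1))/(2\pi)}$, so $\limsup_{d\to\infty}\bA_{s,N}(d)/\sqrt{d}\le 1/\sqrt{2\pi}$.

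For the lower bound, the mechanism is the behavior of Laguerre polynomials as $\nu=d/2-1\to\infty$: for each fixed $n$ one has $\tfrac{n!}{\nu^n}L_n^\nu(\nu t)\to(1-t)^n$ uniformly for $t$ in compact sets, and $\tfrac{n!}{\nu^n}L_n^\nu(0)=\tfrac{n!}{\nu^n}\binom{n+\nu}{n}\to 1$. Let $S=\{\,n\le N:(-1)^n=s\,\}$. Any admissible $g$ can be written $g=\sum_{n\in S}a_n\psi_n^\nu$, so that $\ft g=sg$ automatically; with $p(z)=\sum_{n\in S}a_nL_n^\nu(z)$ we have $g(x)=p(2\pi|x|^2)e^{-\pi|x|^2}$, the condition $g(0)=0$ reads $\sum_{n\in S}a_nL_n^\nu(0)=0$, and after multiplying $g$ by $-1$ if necessary we may assume $p$ has positive leading coefficient, so that $r(g)^2=z^*/(2\pi)$ with $z^*$ the largest real root of $p$. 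Now let $g^{(d)}$ be admissible functions with $r(g^{(d)})\le\bA_{s,N}(d)+1/d$, let $z^*_d$ be the largest root of the corresponding polynomial $p^{(d)}$, put $b_n^{(d)}=a_n^{(d)}\nu^n/n!$, and rescale $g^{(d)}$ so that $\max_{n\in S}|b_n^{(d)}|=1$. Along a subsequence, $b_n^{(d)}\to b_n$ with $\max_{n\in S}|b_n|=1$, and the polynomials $t\mapsto p^{(d)}(\nu t)$ then converge, uniformly on compact sets, to $P(1-t)$, where $P(u)=\sum_{n\in S}b_nu^n\not\equiv 0$. Since all elements of $S$ have a single parity, $P$ is even if $s=+1$ and odd if $s=-1$; passing to the limit in $g^{(d)}(0)=0$ gives $P(1)=\sum_{n\in S}b_n=0$; and hence $P(-1)=sP(1)=0$. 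Thus $u=-1$ is a real root of the nonzero polynomial $P$, so the smallest real root of $P$ is at most $-1$, and so the largest real root of $t\mapsto P(1-t)$ is at least $2$. One would like to conclude that $\liminf_{d\to\infty}z^*_d/d\ge 1$, hence that $\liminf_{d\to\infty}\bA_{s,N}(d)/\sqrt{d}\ge 1/\sqrt{2\pi}$, which together with the upper bound would prove the conjecture.

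The main obstacle is this last step, which is not valid as stated: uniform convergence of $t\mapsto p^{(d)}(\nu t)$ to $P(1-t)$ locates roots of $p^{(d)}(\nu t)$ only where $P(1-t)$ changes sign, whereas $-1$ may be a root of $P$ of even multiplicity — for instance $P(u)=(u^2-1)^2$ when $N\ge 4$ — in which case the roots of $p^{(d)}(\nu t)$ near $t=2$ could coalesce and leave the real axis, or drift inward to $t<2$. To deal with the degenerate cases I would carry the Laguerre expansion to the next order, writing $\tfrac{n!}{\nu^n}L_n^\nu(\nu t)=(1-t)^n+\nu^{-1}\rho_n(t)+O(\nu^{-2})$ with $\rho_n$ an explicit polynomial, extract a second limiting polynomial from the $\nu^{-1}$ term, and use two pieces of rigidity — that $p^{(d)}$ vanishes \emph{exactly} at $z=0$, and that its leading coefficient is positive — to show that the perturbation pushes each multiple root of $P(1-t)$ in $(0,\infty)$ outward rather than inward; since the multiplicity drops at each step and is bounded by $N$, finitely many such refinement steps would conclude the argument. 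A cleaner alternative, if one can find it, would be to bypass root-tracking and lower-bound $r(g)$ directly by sharpening the estimate behind \eqref{lowerbound-1}, gaining the missing factor of $\sqrt{e}$ from the fact that $g/e^{-\pi|x|^2}$ is a polynomial of bounded degree; but we do not see how to carry this out.
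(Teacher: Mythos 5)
The statement you are addressing is left as a conjecture in the paper (Conjecture~\ref{conj:fixeddegree}); the authors give no proof, so there is no argument of theirs to compare against, and your proposal, as you yourself acknowledge, does not close it either. The gap you flag is the genuine one, and it is exactly where the difficulty of the problem lives: locally uniform convergence of $t \mapsto p^{(d)}(\nu t)$ to $P(1-t)$ produces a sign change of $p^{(d)}$ near $t=2$ only when $P(1-t)$ actually changes sign there, and the constraints $P(1)=P(-1)=0$ do not force this once $N \ge 4$: the degenerate limits $P(u)=(u^2-1)^2$ (for $s=+1$) and $P(u)=u(u^2-1)^2$ (for $s=-1$, $N\ge 5$) are available, and for such limits the first-order asymptotics say nothing about where the last sign change of $p^{(d)}$ sits. (Relatedly, $2\pi r(g)^2$ is the largest root of $p$ of \emph{odd} multiplicity, not the largest real root, which is the same phenomenon in finite terms.) Your proposed repair---expanding $\tfrac{n!}{\nu^n}L_n^\nu(\nu t)$ to order $\nu^{-1}$ and arguing that the correction pushes multiple roots outward---is not carried out, and it is not clear it can be as stated: the coefficients $b_n^{(d)}$ of a near-optimal competitor may drift toward a degenerate configuration at an arbitrary $d$-dependent rate, so the coefficient drift and the $\nu^{-1}$ Laguerre correction compete at incomparable scales, and no fixed finite-order expansion automatically gives a bound uniform over the whole coefficient space; the ``multiplicity drops at each refinement step'' induction is not precise enough to check, in particular when the near-degeneracy comes from slow coefficient drift rather than an exact multiple root. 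The upper-bound half of your argument and the nondegenerate (e.g.\ $N=3$) cases are fine.

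Your ``cleaner alternative'' looks like the more promising route, and more can be said than you suggest. The only consequence of $\ft g = s g$ and $g(0)=0$ that is needed is $\ft g(0) = s\, g(0) = 0$, i.e.\ $\int_{\R^d} g = 0$, which in the variable $z = 2\pi|x|^2$ reads $\int_0^\infty p(z)\, z^{\nu} e^{-z/2}\, dz = 0$ with $\nu = d/2-1$. This Gamma-type measure concentrates on $z = d + O(\sqrt{d}\,)$, while the relative mass of $[0,(1-\epsilon)d]$ is exponentially small in $d$. If $p$ had constant sign on $[2\pi r(g)^2,\infty)$ with $2\pi r(g)^2 \le (1-\epsilon)d$, the vanishing integral would force the contribution of $[0,(1-\epsilon)d]$ to cancel the bulk; but for fixed $\deg p \le N$, a Chebyshev/Remez-type estimate bounds $\max_{[0,(1-\epsilon)d]}|p|$ by a factor polynomial in $d$ times the maximum of $p$ on a window of width $\sim\sqrt{d}$ around $z=d$ (where $p$ has the constant sign), and equivalence of norms on the finite-dimensional space of polynomials of degree at most $N$ bounds that maximum by a constant times the bulk integral of $p$ against the measure. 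The exponential-versus-polynomial mismatch then forces $p \equiv 0$. If the details check out, this yields $\liminf_{d\to\infty} \bA_{s,N}(d)/\sqrt{d} \ge 1/\sqrt{2\pi}$, which together with your upper bound would prove the conjecture, gaining precisely the factor $\sqrt{e}$ missing from \eqref{lowerbound-1} from the bounded-degree hypothesis. I would pursue this direction rather than the perturbative root-tracking.
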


However, the upper bound for $\bA_-(d)$ can be improved using other
functions. In particular, we can make use of the auxiliary functions $f$
constructed in \cite{CZ} for the linear programming bound in high dimensions.
If we set $g = \ft f - f$, then one can show that
\[
r(g) \le (0.3194\ldots + o(1)) \sqrt{d}
\]
as $d \to \infty$.  The number $0.3194\ldots$ is derived from the
Kabatiansky-Levenshtein bound for sphere packing, and the construction in
\cite{CZ} shows how to obtain that bound via the linear programming bound.
The precise number is rather complicated, but it can be characterized as
follows. Let $\theta = 1.0995\ldots$ be the unique root of
\[
2\log(\sec(\theta) + \tan(\theta)) = \sin(\theta) + \tan(\theta)
\]
in the interval $(0,\pi/2)$, and let
\[
c = \frac{\sin(\theta/2) \cot(\theta) e^{\sec(\theta)/2}}{\sqrt{2\pi}} = 0.3194\ldots.
\]
Then
\[
r(g) \le (c+o(1)) \sqrt{d}
\]
as $d \to \infty$, and hence
\[
\limsup_{d \to \infty} \frac{\bA_-(d)}{\sqrt{d}}\le c.
\]
We do not know how to prove the corresponding bound for $\bA_+(d)$, although
we believe it should be true, as it would follow from
Conjecture~\ref{conj:limit}.

\subsection{Existence of extremizers.} The existence proof for extremizers
with $s=-1$ is almost identical to the proof of the $+1$ case in
\cite[Section~6]{GOS}. We briefly outline the proof here for completeness.
Let $f_n \in \A_-(d)\setminus\{0\}$ be an extremizing sequence; that is,
$\sqrt{r(f_n)r(\ft f_n)} \searrow \bA_-(d)$ as $n\to \infty$. By
Lemma~\ref{reduction_lemma} we can assume that $\ft f_n = -f_n$ and
$f_n(0)=0$, and hence $r(f_n)\searrow \bA_-(d)$. We can also assume that
$\|f_n\|_{1} = 1$ for all $n$. In particular, since $\ft f_n=-f_n$, we have
\[
\|f_n\|_{2}^2 = \int_{\R^d} |f_n|^2 \le \|f_n\|_\infty \cdot \|f_n\|_1 \leq \|\ft f_n\|_1 \cdot \|f_n\|_{1} = 1.
\]
Because the unit ball in $L^2(\R^d)$ is weakly compact, we can assume that
$f_n$ converges weakly to some function $f\in L^2(\R^d)$. Because ${\A}_-(d)$
is convex, we can apply Mazur's lemma to assume furthermore that $f_n$
converges almost everywhere and in $L^2(\R^d)$ to $f$. Thus, necessarily we
have $\ft f=-f$ and $r(f)\leq \bA_-(d)$. Since $\|f_n\|_{\infty}\leq \|\ft
f_n\|_{1} = \|f_n\|_{1}= 1$ and $r(f_n)$ is decreasing, we can apply Fatou's
lemma for $g_n=\mathbf{1}_{B^d_{r(f_1)}}  + f_n\geq 0$ to deduce that $f\in
L^1(\R^d)$ and $\ft f(0)\leq 0$. Hence, $f(0)\geq 0$.  We now use Jaming's
high-dimensional version \cite{J} of Nazarov's uncertainty principle \cite{N}
to deduce, exactly as in \cite[Lemma~23]{GOS}, that there exists $K<0$ such
that for all $n$,
\[
\int_{B^d_{r(f_n)}} f_n \leq  K.
\]
(Alternatively, we can use Proposition~2.6 from \cite{BD}, which tells us
less about the constant $K$ but has a simpler proof.) Fatou's lemma implies
that $f$ satisfies the same estimate, and hence is not identically zero. We
conclude that $f \in \A_-(d)$, $\ft f = -f$, and $r(f)\leq \bA_-(d)$, and
thus $r(f) = \bA_-(d)$. Finally, we must have $f(0)=0$, since otherwise the
proof of Lemma~\ref{reduction_lemma} would produce a better function.

\subsection{Infinitely many roots.}\label{root_property}
All that remains to prove is that the extremizers have infinitely many roots.
The proof follows the ideas of \cite[Section~6.2]{GOS} for the $+1$ case. If
$f\in \A_-(d)$ satisfies $\ft f=-f$ and $f(0)=0$ and vanishes at only
finitely many radii beyond $r(f)$, then we find a perturbation function $g\in
\A_-(d)$ satisfying $\ft g = -g$ and $g(0)=0$ such that $r(f+\varepsilon g) <
r(f)$ for small $\varepsilon>0$; thus, $f$ cannot be extremal. In \cite{GOS},
the construction of $g$ varies between the cases $d=1$ (using the Poincar\'e
recurrence theorem) and $d\geq 2$ (using a trick involving Laguerre
polynomials). However, thanks to the Poisson summation formula, every
extremal function $f\in \A_-(1)$ with $\ft f=-f$ and $f(0)=0$ must vanish at
the integers. Thus, we only need to prove our assertion for $d\geq 2$.

In fact, we will rule out the possibility that an extremizer $f$ is
eventually positive. Then applying this proof to the radialization of $f$
will show that $f$ must vanish on spheres of arbitrarily large radius. Thus,
let $f \in \A_-(d)$ be such that $\ft f = -f$, $f(0)=0$, and $f(x)>0$ for
$|x| \ge R$.  We must show that $r(f) > \bA_-(d)$.

Let $\varphi_t$ be the function defined in \eqref{eq:phi-t} with $t \in
(0,1)$ chosen so that
\[
\sqrt{td\log(2)/\pi}<r(f),
\]
and let $\psi = \varphi_t - \ft \varphi_t$. Then $\ft \psi =- \psi$,
$\psi(0)=-1$, and $\psi(x) > 0$ for $|x|\ge r(f)$. This function almost works
as a possible perturbation $g$, but it needs to be fixed at the origin
without changing its eventual nonnegativity. To do so, let $\nu=d/2-1$ and
consider the function
\[
g_n = \psi + \frac{\psi^\nu_{2n+1}}{\psi^\nu_{2n+1}(0)},
\]
where $\psi^\nu_{2n+1}$ is the eigenfunction defined in \eqref{def:phi-nu-n}.
Now $\ft g_n = -g_n$, $g_n(0)=0$, and $g_n$ is eventually positive for each
$n\geq 0$, because $t<1$ implies that $\psi^\nu_{2n+1}$ decays faster than
$\psi$.

As observed in \cite{GOS}, for $d \ge 2$ the eigenfunctions
$\psi^\nu_j/\psi^\nu_j(0)$ converge to zero uniformly on all compact subsets
of $\R^d\setminus\{0\}$ as $j \to \infty$; the proof amounts to Fej\'er's
asymptotic formula for Laguerre polynomials \cite[Theorem~8.22.1]{S}.  Using
this convergence, let $n$ be large enough that $g_n(x) > 0$ for $|x| \in
[r(f),R]$, and then choose $R'$ so that $g_n(x) > 0$ for $|x| \ge R'$. Let
$m=\min\{|f(x)| : R\leq |x| \leq R'\}$, $M=\max\{|g_{n}(x)|: x\in \R^d\}$,
and $0<\varepsilon<m/M$. Then the perturbation $f_\varepsilon = f +
\varepsilon g_n$ satisfies $f_\varepsilon(x) > 0$ for $|x| \ge r(f)$.  Thus,
$r(f_\varepsilon) < r(f)$, which means $f$ cannot be extremal. This completes
the proof of Theorem~\ref{thm_facts_+-1}.

\section{Numerical evidence}
\label{section:numerics}

To explore how $\bA_+(d)$ behaves, we numerically optimized functions $g
\colon \R^d \to \R$ satisfying the conditions of Problem~\ref{+1problem}.
Readers who wish to examine this data can obtain our numerical results from
\cite{CG}.

In our calculations we always choose $g$ to be of the form $g(x) =
p(2\pi|x|^2) e^{-\pi |x|^2}$, where $p$ is a polynomial in one variable of
degree at most $4k+2$, which means $p$ has $4k+2$ degrees of freedom modulo
scaling. The constraint $g(0)=0$ eliminates one degree of freedom, and one
can check using the Laguerre eigenbasis that the constraint $\ft g = g$
eliminates $2k+1$ degrees of freedom. To control the remaining $2k$ degrees
of freedom, we specify $k$ double roots at radii $\rho_1 < \dots < \rho_k$.
We then attempt to choose the radii $\rho_1,\dots,\rho_k$ so as to minimize
$r(g)$.  To do so, we iteratively optimize the choice of radii for successive
values of $k$, by making an initial guess based on the previous value of $k$
and then improving the guess using multivariate Newton's method. Each choice
of $\rho_1,\dots,\rho_k$ proves an upper bound for $\bA_+(d)$, and we hope to
approximate $\bA_+(d)$ closely as $k$ grows. (Note that if
Conjecture~\ref{conj:fixeddegree} holds, then we cannot obtain improved
bounds if $k$ remains bounded for large $d$.) This method was first applied
by Cohn and Elkies \cite[Section~7]{CE} to $\bA_-(d)$, with a simpler
optimization algorithm. Cohn and Kumar \cite{CK} replaced that algorithm with
Newton's method, and we made use of their implementation.

We have no guarantee that the numerical optimization will converge to even a
local optimum for any given $d$ and $k$, or that the resulting bounds will
converge to $\bA_+(d)$ as $k \to \infty$.  Indeed, we quickly ran into
problems when $d \le 2$, and eventually for $d=3$ and $4$ as well, but for $5
\le d \le 128$ we arrived at the global optimum for each $k \le 64$. These
calculations are what initially led us to believe that $\bA_+(12)=\sqrt{2}$.

Our numerical calculations are generally not rigorous: although we believe we
have used more than sufficient precision, we cannot bound the error from the
use of floating-point arithmetic. However, we have used exact rational
arithmetic to prove all the numerical upper bounds for $\bA_s(d)$ we report
in this paper.\footnote{The non-sharp cases from Table~\ref{table:data} are
straightforward to check rigorously, while the inequality $\bA_+(28) <
1.98540693489105$ requires more work because it uses a higher-degree
polynomial with more complicated coefficients. We have proved it using the
techniques and code from Appendix~A of \cite{CK}.} Thus, they are genuine
theorems, while our numerical assertions about summation formulas have not
been rigorously proved.

\begin{table}
\caption{Upper bounds for $\bA_+(d)$ and $\bA_-(d-4)$.} \label{table:data}
\begin{tabular}{rccrcc}
\toprule
$d$ & $\bA_+(d)$ & $\bA_-(d-4)$ & $d$ & $\bA_+(d)$ & $\bA_-(d-4)$\\
\cmidrule(r){1-3} \cmidrule(l){4-6}
$1$ & $0.572990$ & & $17$ & $1.619692$ & $1.627509$\\
$2$ & $0.756207$ & & $18$ & $1.657044$ & $1.665874$\\
$3$ & $0.887864$ & & $19$ & $1.693390$ & $1.703115$\\
$4$ & $0.965953$ & & $20$ & $1.728806$ & $1.739328$\\
$5$ & $1.036454$ & $1$ & $21$ & $1.763360$ & $1.774593$\\
$6$ & $1.101116$ & $1.074570$ & $22$ & $1.797112$ & $1.808982$\\
$7$ & $1.161109$ & $1.141962$ & $23$ & $1.830115$ & $1.842559$\\
$8$ & $1.217275$ & $1.203808$ & $24$ & $1.862417$ & $1.875378$\\
$9$ & $1.270241$ & $1.261244$ & $25$ & $1.894060$ & $1.907490$\\
$10$ & $1.320483$ & $1.315083$ & $26$ & $1.925084$ & $1.938938$\\
$11$ & $1.368375$ & $1.365923$ & $27$ & $1.955522$ & $1.969763$\\
$12$ & $\sqrt{2}$ & $\sqrt{2}$ & $28$ & $1.985407$ & $2$\\
$13$ & $1.458239$ & $1.460307$ & $29$ & $2.014769$ & $2.029684$\\
$14$ & $1.500647$ & $1.504478$ & $30$ & $2.043633$ & $2.058842$\\
$15$ & $1.541603$ & $1.546952$ & $31$ & $2.072024$ & $2.087503$\\
$16$ & $1.581246$ & $1.587911$ & $32$ & $2.099965$ & $2.115691$\\
\bottomrule
\end{tabular}
\end{table}

Table~\ref{table:data} shows our upper bounds for $\bA_+(d)$ for $1 \le d \le
32$, together with $\bA_-(d-4)$ for comparison (taken from \cite{C2}).  The
shift by $4$ approximately aligns the columns, with the best case being
$\bA_+(12) = \bA_-(8) = \sqrt{2}$. We have no conceptual explanation for this
alignment, but it fits conveniently with the sign in
Proposition~\ref{prop:psi}, and it supports our conjecture that
\[
\lim_{d \to \infty} \frac{\bA_+(d)}{\sqrt{d}} = \lim_{d \to \infty} \frac{\bA_-(d)}{\sqrt{d}}.
\]
The convergence to this limit is slow enough that it is difficult to estimate
the limit accurately from numerical data.

For $d \le 2$ our numerical methods perform poorly, for the reasons described
below.  For $d=3$ the bound for $\bA_+(d)$ in Table~\ref{table:data} is
obtained using $k=27$, and for $d \ge 4$ we use $k=32$.  In particular, we
deliberately use a smaller value of $k$ than the limits of our computations
for $d \ge 4$, so that we can use data from larger $k$ to estimate the rate
of convergence.  These computations suggest the following conjecture.

\begin{conjecture} \label{conj:table-optimal}
For $3 \le d \le 32$, the upper bounds for $\bA_+(d)$ and $\bA_{-}(d-4)$ in
Table~\ref{table:data} are sharp, except for an error of at most $1$ in the
last decimal digit shown.
\end{conjecture}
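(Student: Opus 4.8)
The upper bounds displayed in Table~\ref{table:data} are already rigorous theorems: each is certified by an explicit function $g(x) = p(2\pi|x|^2)e^{-\pi|x|^2}$ whose sign behavior is checked in exact rational arithmetic, so for every relevant $d$ and sign we already know $\bA_s(d)$ is at most the tabulated value. Thus the plan is to prove, for each of the finitely many dimensions with $3 \le d \le 32$ (for $\bA_+$) or $5 \le d \le 32$ (for $\bA_-(d-4)$), a matching lower bound that comes within one unit in the last displayed digit. The only mechanism we know for producing sharp lower bounds of this strength is Proposition~\ref{prop:lowerbound}: it suffices to exhibit, for each such $d$, a summation formula of the form \eqref{eq:general-summation} — equivalently \eqref{eq:general-summation2} — valid for all radial Schwartz functions on $\R^d$, with weights $c_j > 0$, nodes $0 < \rho_0 < \rho_1 < \cdots$ satisfying $\rho_{j+1}/\rho_j \to 1$, and first node $\rho_0$ agreeing with the table entry to the precision shown. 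This is precisely Conjecture~\ref{conjecture:summation} restricted to this finite range of dimensions, so establishing that special case would prove Conjecture~\ref{conj:table-optimal} as well.

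To carry this out for a fixed $d$, I would first produce the candidate data $(\rho_j, c_j)$ to high precision, following the construction of Section~\ref{section:summation-formulas} (for instance as the dual of the optimization that generated the table), guided by the arithmetic Ansatz suggested by the known and conjectural cases: $\rho_j = \sqrt{2j + O(1)}$ with $c_j$ of polynomial growth comparable to $\sigma_{d/2-1}(j + O(1))$, as in the exact formula of Lemma~\ref{E6_poisson_summation} when $d=12$ and the conjectural $d=28$ formula of Table~\ref{table:summation-formula} and Conjecture~\ref{conjecture:28}. One would then need three things. First, that the formula holds on all radial Schwartz functions: exactly as in the proof of Lemma~\ref{E6_poisson_summation}, it is enough to verify it on Gaussians $f(x) = e^{-\pi\alpha|x|^2}$ with $\Re(\alpha) > 0$ and to check that $c_j$ grows at most polynomially, so that the functional $f \mapsto f(0) + s\ft f(0) - s\sum_j c_j f(\rho_j) - \sum_j c_j \ft f(\rho_j)$ is continuous in the Schwartz topology; the Gaussian identity should reduce to a functional equation for the generating series $\sum_j c_j q^{\rho_j^2/2}$, playing the role that \eqref{eq:E6identity} plays for $E_6$. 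Second, positivity of every $c_j$. Third, $\rho_{j+1}/\rho_j \to 1$, which is immediate once $\rho_j \to \infty$ with $\rho_j^2$ a bounded perturbation of a linear sequence. Granting these, Proposition~\ref{prop:lowerbound} gives $\bA_s(d) \ge \rho_0$, and since only finitely many dimensions are involved the conjecture follows.

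The hard part — indeed the reason this is still only a conjecture — is producing the summation formula in closed enough form to verify the Gaussian identity and the positivity $c_j > 0$. In the five cases where $\bA_s(d)$ is known exactly, the formula comes from Poisson summation over a lattice ($\Z$, $E_8$, Leech for $\bA_-(1)$, $\bA_-(8)$, $\bA_-(24)$) or from a classical Eisenstein series ($E_6$ for $\bA_+(12)$), and there the modular transformation law supplies the Gaussian identity while the explicit Fourier coefficients make $c_j > 0$ transparent. For a generic $d$ in this range we have neither a lattice nor a known modular form: we cannot even guess the generating series $\sum_j c_j q^{\rho_j^2/2}$, and without it we have no handle on either the functional equation or the signs. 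The partial workaround I would try first is to replace the exact formula by a finitely truncated version together with a rigorous tail bound — computing finitely many $(\rho_j, c_j)$ in interval arithmetic and estimating the remaining terms via the conjectural asymptotics — and ask whether the resulting inequality still certifies a lower bound within the tolerance of the table. But bounding the tail rigorously again seems to demand understanding the generating series, so I expect that is exactly where the difficulty is concentrated.
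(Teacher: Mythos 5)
The statement you are addressing is a conjecture in the paper, and the paper does not prove it: its support consists of (i) upper bounds verified in exact arithmetic, (ii) the fixed-degree certificates of Proposition~\ref{proposition:summation-formula} and Corollary~\ref{cor:optimal}, which show only that for each fixed $k$ the computed polynomial is globally optimal among polynomials of that degree, and (iii) the empirical stability of the bounds as $k$ ranges up to $64$, which the authors explicitly concede cannot rule out convergence to the wrong limit. Your proposal does not prove the statement either, and you say so yourself: you reduce Conjecture~\ref{conj:table-optimal} to producing, for each $d$ in the range, a genuine summation formula of the form \eqref{eq:general-summation} with positive weights, nodes whose ratios tend to $1$, and first node matching the table to the displayed precision --- that is, to a quantitatively sharpened instance of Conjecture~\ref{conjecture:summation} --- and then acknowledge that the essential ingredients (a closed enough form to verify the identity on Gaussians, positivity of the $c_j$, and control of the nodes) are unavailable. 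That is precisely the open problem; outside the handful of exactly solved cases there is no known lattice, modular form, or generating series playing the role that $E_6$ plays in Lemma~\ref{E6_poisson_summation}, so the reduction transfers rather than closes the gap.

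Two further points are worth making explicit. First, even granting the existence of a sharp summation formula in a given dimension, Proposition~\ref{prop:lowerbound} yields $\bA_s(d)\ge\rho_0$ only once the formula is in hand; to conclude agreement with Table~\ref{table:data} to within one unit in the last digit you would additionally need a rigorous numerical handle on $\rho_0$ itself, which is exactly the information your truncation-plus-tail-bound workaround cannot supply without the missing generating series --- as you note, the tail estimate presupposes the asymptotics you are trying to establish. Second, be careful not to overstate what the polynomial-level formulas of Section~\ref{section:summation-formulas} give: they certify optimality only within the finite-dimensional space spanned by $q_0,\dots,q_{2k+1}$, so they bound the numerically computed quantity from below, not $\bA_s(d)$. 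Your plan is a faithful articulation of the strategy the paper itself envisions, but as a proof attempt it has a genuine, acknowledged hole at its center, matching the status of the statement as a conjecture.
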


In each case with $d \ge 3$, we can use a summation formula to check that we
have found the optimal bound for the given values of $d$ and $k$; we explain
how this is done in Section~\ref{section:summation-formulas}.  However, we do
not know how quickly the bounds converge as $k \to \infty$, or whether they
indeed converge to $\bA_s(d)$ at all.  Our confidence in
Conjecture~\ref{conj:table-optimal} comes from comparing the bounds for $32
\le k \le 64$ when $d \ge 5$.  They seem to have converged to this number of
digits, but of course we cannot rule out convergence to the wrong limit.

The approximation $\bA_+(d) \approx \bA_-(d-4)$ and equality $\bA_+(12) =
\bA_-(8) = \sqrt{2}$ raise the question of whether the other exact values
$\bA_-(1)=1$, $\bA_-(2) = (4/3)^{1/4}$ (conjecturally), and $\bA_-(24)=2$ are
also mirrored by $\bA_+$.  That turns out not to be the case:
Table~\ref{table:data} strongly suggests that $\bA_+(5) > 1$ and $\bA_+(6) >
(4/3)^{1/4}$, and it proves that $\bA_+(28) < 2$.  The case of $\bA_+(28)$ is
particularly disappointing, because it might have stood in the same
relationship to $\bA_+(12)$ as the Leech lattice does to the $E_8$ root
lattice. We have found no case other than $d=12$ for which we can guess the
exact value of $\bA_+(d)$.

Taking $k=128$ shows that $\bA_+(28) < 1.98540693489105$, and again we
believe that all these digits agree with $\bA_+(28)$ except the last.  This
upper bound for $\bA_+(28)$ seems discouragingly complicated, but the
underlying root locations display remarkable behavior, shown in
Table~\ref{table:data28}. The table leads us to the following conjecture:

\begin{table}
\caption{Approximations to $r(g)^2, \rho_1^2, \rho_2^2, \dots, \rho_{31}^2$
when $d=28$ and $k=128$. We view these numbers as approximations to the
squared radii for the roots of a function achieving $\bA_+(28)$.} \label{table:data28}
\begin{tabular}{llll}
\toprule
$3.9418406971135$ & $20.000001150214$ & $35.999999987965$ & $52.000000000234$\\
$5.9937066227310$ & $21.999999768273$ & $37.999999967198$ & $54.000000000902$\\
$8.0001376275780$ & $23.999999651853$ & $40.000000012100$ & $55.999999999543$\\
$10.000148227366$ & $25.999999804782$ & $42.000000017800$ & $58.000000002140$\\
$12.000008052312$ & $28.000000118205$ & $43.999999995225$ & $60.000000000589$\\
$13.999980992905$ & $30.000000112036$ & $46.000000002272$ & $61.999999999086$\\
$15.999998782377$ & $31.999999979813$ & $48.000000000644$ & $63.999999999805$\\
$18.000002092309$ & $33.999999997483$ & $49.999999993657$ & $65.999999999746$\\
\bottomrule
\end{tabular}
\end{table}

\begin{conjecture}
\label{conjecture:28} There exists a radial Schwartz function $g \in \A_+(28)
\setminus \{0\}$ with $\ft g = g$, $g(0)=0$, and $r(g) = \bA_+(28)$, and
whose nonzero roots are at radii $\sqrt{2j + o(1)}$ as $j \to \infty$,
starting with $j=2$.
\end{conjecture}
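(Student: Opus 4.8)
The plan is to split Conjecture~\ref{conjecture:28} into an \emph{existence} statement and an \emph{asymptotic root-location} statement, and to feed both from a single source: a sharp summation formula of the type in Proposition~\ref{prop:lowerbound}. Concretely, the two ingredients I would try to secure are: (A) the $d=28$ case of Conjecture~\ref{conjecture:summation} --- that is, weights $c_j>0$ and radii $0<\rho_0<\rho_1<\cdots$ with $\rho_0=\bA_+(28)$ for which every radial Schwartz $f\colon\R^{28}\to\R$ satisfies $f(0)+\ft f(0)=\sum_{j\ge0}c_j\big(f(\rho_j)+\ft f(\rho_j)\big)$ --- \emph{refined} by the asymptotics $\rho_j^2=2j+4+o(1)$ suggested by Table~\ref{table:summation-formula}; and (B) a complementary-slackness argument showing that an extremizer for $\bA_+(28)$ vanishes exactly at the nodes of such a formula (plus the origin). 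The eigenvalue bookkeeping is consistent with working in $\A_+$, since $(-1)^{1+28/4}=+1$ is the eigenvalue produced both by Theorem~\ref{thm_facts_+-1} (with $s=+1$) and by Proposition~\ref{prop:psi} with $d=28$.

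For step (B): granted a summation formula as in (A), take a radial extremizer $g\in\A_+(28)$ with $\ft g=g$, $g(0)=0$, and $r(g)=\bA_+(28)$, which Theorem~\ref{thm_facts_+-1} provides. Applying the formula in its equivalent form \eqref{eq:general-summation2} to $g$ gives $0=g(0)=\sum_{j\ge0}c_jg(\rho_j)$; since $\rho_j\ge\rho_0=r(g)$ makes $g(\rho_j)\ge0$ and each $c_j>0$, every term vanishes, so $g(\rho_j)=0$ for all $j$. The nodes $\rho_j$ with $j\ge1$ lie strictly beyond $r(g)$, where $g$ is nonnegative, so they are forced to be (at least) double roots, while $\rho_0$ is the last sign change; substituting $\rho_j=\sqrt{2(j+2)+o(1)}$ and reindexing by $m=j+2\ge2$ yields precisely the root pattern asserted. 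Two gaps must still be closed: one must rule out roots of $g$ other than the $\rho_j$, i.e.\ prove the linear-programming-type bound is tight --- for instance via uniqueness of the extremizer (known to be delicate even in the sphere-packing setting) or by showing directly that $g>0$ strictly between consecutive nodes --- and one must promote the extremizer of Theorem~\ref{thm_facts_+-1}, which is not a priori a Schwartz function, to one, which I would approach by a regularization/bootstrap argument or by arguing that the extremal problem forces $g$ into a special analytic form.

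The main obstacle is step (A): constructing the summation formula. In dimension $12$ this role is played by Lemma~\ref{E6_poisson_summation}, which comes cleanly from the Eisenstein series $E_6$; but, as the discussion after Conjecture~\ref{conjecture:summation} explains, for $d=28$ the relevant object is a deformation of $E_{14}$ in which the $-24e^{2\pi i z}$ term is suppressed at the cost of perturbing every other node, and this cannot be a classical modular form --- indeed $M_{14}$ for the full modular group is one-dimensional, and moreover the perturbed exponents $\rho_j^2$ are not integers, so the generating function $\sum_j c_j e^{\pi i z\rho_j^2}$ is not a $q$-series for any congruence subgroup. The approach I would take is to look for a continuous linear functional $\Lambda$ on $\Schw(\R^{28})$ of the shape $\Lambda(f)=f(0)+\ft f(0)-\sum_{j\ge0}c_j\big(f(\rho_j)+\ft f(\rho_j)\big)$ that annihilates every Gaussian $x\mapsto e^{-\pi\alpha|x|^2}$ with $\Re(\alpha)>0$; exactly as in the proof of Lemma~\ref{E6_poisson_summation}, polynomial growth of the $c_j$ together with $\rho_j\to\infty$ makes $\Lambda$ continuous and hence reduces the identity to this Gaussian case. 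Since Table~\ref{table:summation-formula} points to $c_j\sim24\,\sigma_{13}(j+2)$ and $\rho_j^2\sim2j+4$, I would try to build the required object by a fixed-point or implicit-function-theorem argument that deforms $E_{14}$ within a suitable Banach space of generalized $q$-series, subject to the node-suppression constraint and the transformation law forced by $z\mapsto-1/z$. The hard analytic core --- and the reason the paper records this only as a conjecture --- is to make that deformation rigorous: one must show the infinitely many perturbed exponents $\rho_j^2$ stay real, simple, and positive, that $c_j>0$ for \emph{all} $j$ rather than merely asymptotically, and that the perturbed series converges in the Schwartz topology so that Proposition~\ref{prop:lowerbound} genuinely applies. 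A direct construction of $g$ by a Viazovska-type transform in the spirit of Proposition~\ref{prop:psi} looks far less promising, because the $\sin^2(\pi|x|^2/2)$ factor in \eqref{eq:f-real} would pin the roots at \emph{exactly} $\sqrt{2j}$, contradicting the wobble already visible in Table~\ref{table:data28}.
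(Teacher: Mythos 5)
The statement you are addressing is a conjecture, and the paper does not prove it: its support consists of the numerical root data in Table~\ref{table:data28} (from $k=128$ computations), the finite-degree summation formulas of Section~\ref{section:summation-formulas} (Proposition~\ref{proposition:summation-formula} and Corollary~\ref{cor:optimal}), and the heuristic comparison with $E_{14}$ in Table~\ref{table:summation-formula}. Your proposal does not prove it either; it is a conditional reduction, and the condition you need is precisely the open problem. Your step (A) is Conjecture~\ref{conjecture:summation} for $(s,d)=(1,28)$ \emph{strengthened} by the node asymptotics $\rho_j^2 = 2j+4+o(1)$, which the paper itself only ``believes'' on numerical grounds. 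Your sketch of how to obtain it --- deforming $E_{14}$ inside a Banach space of generalized $q$-series by an implicit-function argument --- supplies no actual mechanism: you would be solving for infinitely many nodes and weights subject to a functional equation under $z\mapsto -1/z$ that no longer has a modular-forms interpretation once the exponents leave $2\Z$, and nothing in your argument controls simplicity, positivity, or summability of the perturbed data. Since the entire arithmetic content of the conjecture (the $\sqrt{2j+o(1)}$ pattern) enters only through this assumed formula, the core of the statement is hypothesized rather than established.

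Even granting (A), your step (B) has two gaps you name but do not close, and both matter for the statement as written. First, the extremizer produced by Theorem~\ref{thm_facts_+-1} lies in $\A_+(28)\cap L^1$ and is not known to be Schwartz, whereas the conjecture asks for a radial \emph{Schwartz} extremizer; applying the summation formula to a non-Schwartz $g$ can be handled by the mollification device of Lemma~\ref{lemma:vanishing}, but upgrading the extremizer itself to a Schwartz function is a separate unsolved problem (in dimension $12$ the paper gets smoothness only because it has an explicit construction via Proposition~\ref{prop:psi}). Second, complementary slackness forces $g$ to vanish at the nodes $\rho_j$, but the conjecture describes the set of nonzero roots, so you must also exclude extraneous roots; in the finite-degree setting the paper does this through the nondegeneracy hypotheses behind Corollary~\ref{cor:optimal}, and no analogue is available for the infinite problem. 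As a side remark, your dismissal of a Viazovska-type construction on the grounds that the $\sin^2(\pi|x|^2/2)$ factor pins roots at exactly $\sqrt{2j}$ is in mild tension with the paper's own guess that the $\bA_+(28)$ extremizer is a primary term with exactly those roots plus secondary terms that perturb them; a single transform of the form \eqref{eq:f-real} is ruled out, but combinations are not. In summary: your route mirrors the heuristics the paper uses to motivate the conjecture, but the decisive ingredient is assumed, so this remains a program rather than a proof --- consistent with the paper leaving the statement as a conjecture.
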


This pattern is reminiscent of \cite[Section~7]{CM}, as well as the behavior
of $\bA_\pm(d)$ in other cases, but it is a particularly striking example. We
expect that Conjecture~\ref{conjecture:28} is true, but a weaker conjecture
consistent with the data is that there exists some $\varepsilon<1$ such that
the squared radii are within $\varepsilon$ of successive even integers.

For comparison, \cite{CKMRV} constructs a function achieving $\bA_-(24)$
whose nonzero roots are exactly at $\sqrt{2j}$ with $j \ge 2$.  Our best
guess is that the function achieving $\bA_+(28)$ is given by a primary term
that has these exact roots, plus one or more secondary terms that perturb the
roots but do not substantially change them.  If that is the case, then
perhaps one can describe this function explicitly and thereby characterize
$\bA_+(28)$ exactly.  However, we have not been able to guess or derive such
a formula.

Another mystery is the behavior of $\bA_+(d)$ for $d \le 2$.  In these
dimensions we quickly run into cases in which the last sign change $r(g)$ is
not a continuous function of $\rho_1,\dots,\rho_k$ at the optimum, and this
lack of continuity ruins our numerical algorithms. (Instead, we resort to
linear programming, which is much slower.) Of course it is no surprise that
the last sign change is discontinuous at some points, because a small
perturbation of a polynomial can convert a double root to two single roots,
or even create a new root if the degree increases. However, we do not expect
this behavior to occur generically.  In particular, it cannot occur if
$\deg(p)=4k+2$ and $g$ has no double roots beyond the $k$ double roots we
have forced to occur.

When $d=2$, even the case $k=1$ is problematic.  Specifically, one can check
that the optimal value $r(g) = \sqrt{2/\pi}$ is achieved by setting $\rho_1 =
\sqrt{3/\pi}$. As $\rho_1$ approaches $\sqrt{3/\pi}$ from the left, $r(g)$
decreases towards $\sqrt{2/\pi}$, but it increases towards infinity as
$\rho_1$ approaches $\sqrt{3/\pi}$ from the right.  This discontinuity occurs
because the leading coefficient of the polynomial $p$ vanishes when $\rho_1 =
\sqrt{3/\pi}$.  The leading coefficient also vanishes at the best choices of
$\rho_1,\dots,\rho_k$ we have found for $2 \le k \le 4$, while the case $k=5$
suffers from a different problem: the resulting polynomial has six double
roots, rather than just five, and again the location of the last sign change
is discontinuous.

When $d=1$, there are no problems for $k \le 2$, and the leading coefficient
vanishes for $k=3$.  For $k=4$, we find an extra double root, but there is no
discontinuity when $k=5$.

In Table~\ref{table:data} we have reported the bound using $k=5$ for $d\le
2$.  We believe that we have approximated the true optima for $k=5$, but the
bounds almost certainly do not agree with $\bA_+(d)$ to the full six digits
shown, unlike Conjecture~\ref{conj:table-optimal}.

We have not observed a discontinuity near the optimum in any other dimension.
However, when $d=3$ we cannot find a local optimum with $k=28$, because the
largest root tends to infinity in our calculations.  Computations carried out
by David de~Laat indicate that the optimum occurs at a singularity and the
resulting discontinuity is interfering with our algorithms.  When $d=4$ we
run into a similar problem at $k=36$. We do not know whether this phenomenon
is limited to $d \le 4$.

\section{Summation formulas}
\label{section:summation-formulas}

We do not know how to obtain the hypothetical summation formulas described in
Conjecture~\ref{conjecture:summation}.  Aside from $\bA_-(2)$ and the four
cases that have been solved exactly (namely $\bA_-(1)$, $\bA_-(8)$,
$\bA_+(12)$, and $\bA_-(24)$), we have not found any summation formulas that
come close to matching our upper bounds. However, in many cases we can
compute optimal summation formulas for polynomials of a fixed degree. For $d
\ge 3$, these formulas show that we have found the optimal polynomials for
each fixed $k$ in our computations in Section~\ref{section:numerics}, and we
believe that when $k$ is large they should approximate the ultimate summation
formulas. For example, Table~\ref{table:summation-formula} is based on
calculations with $k=128$.

Recall that our numerical method uses the Laguerre eigenbasis.  If we are
bounding $\bA_s(d)$, we let $\nu = d/2-1$ and
\[
q_j = \begin{cases} L_{2j}^\nu & \textup{if $s=1$, and}\\
L_{2j+1}^\nu & \textup{if $s=-1$.}
\end{cases}
\]
Then our method seeks a linear combination $p$ of $q_0,q_1,\dots,q_{2k+1}$
that vanishes at $0$ and minimizes $r(p)$; using the function $f(x) = p(2\pi
|x|^2)e^{-\pi |x|^2}$, we conclude that $\bA_s(d) \le \sqrt{r(p)/(2\pi)}$,
where
\[
r(p) = \inf {} \{R \ge 0: \text{$p(x)$ has the same sign for $x\geq R$}\}.
\]
(Unlike earlier, we require only $x \ge R$ in the definition of $r(p)$,
rather than $|x| \ge R$, because we care only about the right half-line.) To
construct $p$, we impose double roots at locations $\rho_1,\dots,\rho_k$, and
then choose these locations so as to minimize $\rho_0 := r(p)$.  Note that in
our notation here, $\rho_i$ denotes what would have been called $2\pi
\rho_i^2$ in Section~\ref{section:numerics}.

To obtain a summation formula, we will need to impose some non-degeneracy
conditions. We will assume that $0 < \rho_0 < \rho_1 < \dots < \rho_k$, and
that $p$ is uniquely determined among linear combinations of
$q_0,\dots,q_{2k+1}$ by the following conditions:
\begin{enumerate}
\item $p(0)=0$,

\item $p(\rho_i) = p'(\rho_i)=0$ for $1 \le i \le k$, and

\item the coefficient of $q_{2k+1}$ is $1$.
\end{enumerate}
We assume furthermore that $p$ has roots of order exactly~$1$ at~$\rho_0$ and
exactly~$2$ at $\rho_1,\dots,\rho_k$, and no other real roots greater than
$\rho_0$. Finally, we assume that we have found a strict local minimum for
$r(p)$; in other words, $r(p)$ increases if we perturb $\rho_1,\dots,\rho_k$.

These assumptions cannot always be satisfied.  For example, when
$(s,d,k)=(1,2,1)$ the coefficient of $q_{2k+1}$ vanishes. However, for $d>2$
they are satisfied in every case in which we have found a local minimum. See
Table~\ref{table:computed} for a list.

\begin{table}
\caption{Values of $k$ for which we have numerically computed a local minimum
and the corresponding summation
formula to one hundred decimal places. When $(s,d) = (1,1)$, $(1,2)$, $(1,3)$,
$(1,4)$, or $(-1,3)$,
we believe the next value of $k$ does not work
(i.e., there is no local optimum satisfying our non-degeneracy conditions);
otherwise, the table simply leaves off where we
stopped computing.} \label{table:computed}
\begin{tabular}{ccccccc}
\toprule
$s$ & $d$ & $k$ & \qquad & $s$ & $d$ & $k$\\
\cmidrule(r){1-3} \cmidrule(l){5-7}
$1$ & $1$ & $1$, $2$, $5$ & & $-1$ & $1$ & $1$--$64$\\
$1$ & $2$ & --- & & $-1$ & $2$ & $1$--$64$\\
$1$ & $3$ & $1$--$27$ & & $-1$ & $3$ & $1$--$20$, $26$--$31$\\
$1$ & $4$ & $1$--$35$ & & $-1$ & $4$--$128$ & $1$--$64$\\
$1$ & $5$--$128$ & $1$--$64$\\
$1$ & $28$ & $1$--$128$\\
\bottomrule
\end{tabular}
\end{table}

\begin{proposition} \label{proposition:summation-formula}
Under the hypotheses listed above, up to scaling there are unique
coefficients $c_0,\dots,c_{k+1}$, not all zero, such that
\[
\sum_{i=0}^k c_i g(\rho_i) + c_{k+1} g(0) = 0
\]
for every linear combination $g$ of $q_0,\dots,q_{2k+1}$.  Furthermore,
$c_0,\dots,c_k$ are nonzero and have the same sign.  If $s=1$, then $c_{k+1}$
is nonzero and has the opposite sign.
\end{proposition}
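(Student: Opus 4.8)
The plan is to reduce everything to linear algebra on the $(2k+2)$-dimensional polynomial space $V:=\mathrm{span}(q_0,\dots,q_{2k+1})$ (the $q_j$ have distinct degrees), and then invoke the local optimality of $p$ at two places. Write $E_i$ for the functional $g\mapsto g(\rho_i)$ on $V$ for $0\le i\le k$, write $E_{k+1}$ for $g\mapsto g(0)$, and let $T\colon V\to\R^{2k+1}$ be the Hermite map $g\mapsto\big(g(0),g(\rho_1),g'(\rho_1),\dots,g(\rho_k),g'(\rho_k)\big)$. The non-degeneracy hypotheses say exactly that $\ker T=\R p$, so $T$ is surjective; since $p(\rho_0)=0$, the functional $E_0$ annihilates $\ker T$ and hence factors through $T$, yielding on $V$ an identity $g(\rho_0)=a_0g(0)+\sum_{i=1}^k\big(a_ig(\rho_i)+b_ig'(\rho_i)\big)$. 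The proposition then amounts to: (i) all $b_i$ vanish; (ii) this is the unique linear relation among $E_0,\dots,E_{k+1}$ up to scaling; (iii) the coefficients $c_0:=1$, $c_i:=-a_i$, $c_{k+1}:=-a_0$ have the stated signs.

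The heart of the matter is (i), which I would extract from a first-order optimality computation. For small $\varepsilon=(\varepsilon_1,\dots,\varepsilon_k)$ let $p_\varepsilon\in V$ be the polynomial with $p_\varepsilon(0)=0$, with $p_\varepsilon(\rho_i+\varepsilon_i)=p_\varepsilon'(\rho_i+\varepsilon_i)=0$, and with $q_{2k+1}$-coefficient $1$; this is a linear system, invertible at $\varepsilon=0$ by the uniqueness of $p$, so $p_\varepsilon$ is well-defined and smooth in $\varepsilon$, with $p_0=p$, and is still a legitimate competitor (it vanishes at $0$ and is eventually positive). The conditions that $p$ has roots of order exactly $1$ at $\rho_0$ and exactly $2$ at the $\rho_i$ and no other real roots beyond $\rho_0$ are open, so for small $\varepsilon$ the same description holds for $p_\varepsilon$, and $r(p_\varepsilon)$ equals the simple root $\rho_0(\varepsilon)$ of $p_\varepsilon$ near $\rho_0$, a smooth function of $\varepsilon$. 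Local minimality of $p$ forces $\nabla_\varepsilon\rho_0(0)=0$. Now put $g=p_\varepsilon$ into the identity: Taylor expansion at $\rho_i+\varepsilon_i$ gives $p_\varepsilon(\rho_i)=O(|\varepsilon|^2)$ and $p_\varepsilon'(\rho_i)=-\varepsilon_ip''(\rho_i)+o(|\varepsilon|)$, so the right side is $-\sum_ib_ip''(\rho_i)\varepsilon_i+o(|\varepsilon|)$, while the left side is $p_\varepsilon(\rho_0)=-(\rho_0(\varepsilon)-\rho_0)p_\varepsilon'(\rho_0)+O\big((\rho_0(\varepsilon)-\rho_0)^2\big)=o(|\varepsilon|)$. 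Matching the coefficient of $\varepsilon_i$ gives $b_ip''(\rho_i)=0$, and $p''(\rho_i)\neq0$ because the root has order exactly $2$; hence $b_i=0$, and the identity becomes $\sum_{i=0}^kc_ig(\rho_i)+c_{k+1}g(0)=0$. I expect this to be the main obstacle: making the perturbation and root-tracking arguments airtight is where the non-degeneracy hypotheses are really used, and without this step one is stuck with a Gauss--Tur\'an-type identity that also involves the derivatives $g'(\rho_i)$.

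For uniqueness (ii) it suffices that $E_1,\dots,E_k,E_{k+1}$ be linearly independent on $V$, since then $E_0$ lies in their span (by step (i)) and any annihilating combination is forced to be a scalar multiple of the one found. I would prove the independence by a dimension count against $\dim\ker T=1$: the subspace of $V$ cut out by double-root conditions at all of $\rho_1,\dots,\rho_k$ has dimension $2$, so it contains an element with nonzero value at $0$; and for each $j$, relaxing to a double root at $\rho_j$ while leaving $g(\rho_j)$ free (one extra degree of freedom) gives a $3$-dimensional space, hence an element of $V$ that vanishes at every $\rho_i$ with $i\neq j$ but not at $\rho_j$. Testing a purported relation against these elements kills every coefficient.

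For the signs (iii), $c_0=1>0$ by the normalization, and $a_i=\gamma_i(\rho_0)$ where $\gamma_i\in V$ is any solution of $\gamma_i(\rho_j)=\delta_{ij}$, $\gamma_i'(\rho_j)=0$ for $1\le j\le k$, and $\gamma_i(0)=0$; the value $\gamma_i(\rho_0)$ is independent of this choice because $p(\rho_0)=0$. Here I would perturb a \emph{value}: $\tilde p_\eta:=p+\eta\gamma_i\in V$ vanishes at $0$, is eventually positive, has value $\eta$ at $\rho_i$, and satisfies $\tilde p_\eta(\rho_0)=\eta\,\gamma_i(\rho_0)$. For $\eta>0$ small it has no sign change near $\rho_i$ (the value there is positive) and an otherwise stable zero pattern, so $r(\tilde p_\eta)$ is the simple root of $\tilde p_\eta$ near $\rho_0$, namely $\rho_0-\eta\,\gamma_i(\rho_0)/p'(\rho_0)+O(\eta^2)$ with $p'(\rho_0)>0$. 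Local minimality gives $r(\tilde p_\eta)\ge\rho_0$, so $\gamma_i(\rho_0)\le0$; and $\gamma_i(\rho_0)=0$ would leave $\rho_0$ a simple root of $\tilde p_\eta$ for all small $\eta>0$, i.e. $r(\tilde p_\eta)=\rho_0$, contradicting strict minimality since $\gamma_i\neq0$. Hence $\gamma_i(\rho_0)<0$ and $c_i=-a_i>0$ for $1\le i\le k$. Finally, when $s=1$ the constant function $1=L_0^\nu=q_0$ lies in $V$, so applying the summation formula to it gives $\sum_{i=0}^kc_i+c_{k+1}=0$, forcing $c_{k+1}=-\sum_{i=0}^kc_i<0$; this also explains why no such claim is made for $s=-1$, where $1\notin V$.
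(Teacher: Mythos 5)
Most of your argument tracks the paper's proof: your step (i) is a dual formulation of the paper's existence lemma (the paper exhibits $\alpha(\rho)$ and the derivatives $(\partial\alpha/\partial\rhot_i)(\rho)$ in the kernel of the evaluation matrix, whereas you write $E_0$ in terms of the Hermite data at $0,\rho_1,\dots,\rho_k$ and use the stationarity of $\varepsilon\mapsto r(p_\varepsilon)$ to kill the derivative terms --- both hinge on exactly the same first-order optimality), and your uniqueness argument and your treatment of $c_{k+1}$ for $s=1$ via $g=q_0=1$ coincide with the paper's. The problem is step (iii), the signs of $c_1,\dots,c_k$. There you apply ``local minimality'' to $\tilde p_\eta=p+\eta\gamma_i$, but the hypothesis you are given is only that $\rhot\mapsto r(p_{\rhot})$ has a strict local minimum at $\rhot=\rho$ \emph{within the $k$-parameter family} of normalized elements of $V$ vanishing at $0$ with double roots at $\rhot_1,\dots,\rhot_k$. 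Your perturbation leaves that family: $\tilde p_\eta(\rho_i)=\eta\neq 0$, so $\tilde p_\eta$ has no double root near $\rho_i$ at all. Nothing in the hypotheses gives $r(\tilde p_\eta)\ge\rho_0$; minimality of $r$ over the full slice $\{g\in V: g(0)=0\}$ is precisely what Corollary~\ref{cor:optimal} \emph{deduces from} the proposition (indeed in global form), so invoking it here is circular. If $\gamma_i(\rho_0)>0$ one would have $r(\tilde p_\eta)<\rho_0$ for small $\eta>0$ without contradicting the stated hypothesis, so your inequality $\gamma_i(\rho_0)\le 0$ is unproved; the same objection applies to your claim that $\gamma_i(\rho_0)=0$ would ``contradict strict minimality.''

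The repair is to perturb inside the family, which is what the paper does: let $\rhot$ agree with $\rho$ except $\rhot_i=\rho_i+\varepsilon$ with $\varepsilon>0$ small. Then $p_{\rhot}$ vanishes at $0$ and at every $\rho_j$ with $j\neq i$, $1\le j\le k$; it is nonzero at $\rho_i$ (which lies just to the left of its exact double root at $\rho_i+\varepsilon$, so $p_{\rhot}(\rho_i)$ has the eventual sign), while $r(p_{\rhot})>\rho_0$ by strict local minimality forces $p_{\rhot}(\rho_0)$ to have the opposite sign. Plugging $g=p_{\rhot}$ into the summation formula reduces it to $c_0\,p_{\rhot}(\rho_0)+c_i\,p_{\rhot}(\rho_i)=0$, which gives at once that $c_i\neq 0$ and has the same sign as $c_0$ --- no minimality outside the family is needed. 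With that substitution your outline matches the paper; as written, step (iii) is a genuine gap.
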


We prove this proposition below.  It is a polynomial analogue of the
summation formula \eqref{eq:general-summation2} (with the Gaussian factors
from the Laguerre eigenbasis implicitly incorporated into the coefficients
$c_i$), and it is reminiscent of Gauss-Jacobi quadrature in that it holds on
a $(2k+2)$-dimensional space despite using only $k+2$ coefficients.

\begin{corollary} \label{cor:optimal}
Any linear combination $g$ of $q_0,\dots,q_{2k+1}$ with $g(0)=0$ and $r(g) <
\rho_0$ must vanish identically, and $p$ is the unique linear combination
achieving $r(p) = \rho_0$, up to scaling.
\end{corollary}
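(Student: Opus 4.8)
The plan is to read off Corollary~\ref{cor:optimal} from Proposition~\ref{proposition:summation-formula} and the non-degeneracy hypotheses, with essentially no new analysis. Write $V$ for the $(2k+2)$-dimensional span of $q_0,\dots,q_{2k+1}$, and set
\[
W = \{\, g \in V : g(0) = 0 \text{ and } g(\rho_i) = g'(\rho_i) = 0 \text{ for } 1 \le i \le k \,\}.
\]
The first step is to observe that $W = \R p$. Indeed $p \in W$, and $W$ is cut out of $V$ by $2k+1$ linear conditions, so $\dim W \ge 1$; conversely, the hypothesis that $p$ is the \emph{unique} element of $V$ satisfying those $2k+1$ conditions together with the normalization that its $q_{2k+1}$-coefficient equals $1$ forces the linear functional ``$q_{2k+1}$-coefficient'' to have trivial kernel on $W$ (if $h \in W$ had vanishing $q_{2k+1}$-coefficient, then $p+h$ would satisfy all the defining conditions of $p$, so $p+h=p$). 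Hence $\dim W = 1$ and $W = \R p$.

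The second step is the heart of the matter. Let $g$ be a linear combination of $q_0,\dots,q_{2k+1}$ with $g(0) = 0$ and $r(g) \le \rho_0$, and suppose $g \not\equiv 0$. Being a nonzero polynomial, $g$ has constant sign on $[r(g),\infty)$, hence on $[\rho_0,\infty)$, so the values $g(\rho_0),\dots,g(\rho_k)$ are all $\ge 0$ or all $\le 0$. Substituting $g$ into the summation formula of Proposition~\ref{proposition:summation-formula} and using $g(0)=0$ yields $\sum_{i=0}^k c_i g(\rho_i) = 0$; since $c_0,\dots,c_k$ are nonzero and share one sign while the $g(\rho_i)$ share one weak sign, every summand vanishes and $g(\rho_i) = 0$ for $0 \le i \le k$. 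For $i \ge 1$ the point $\rho_i$ lies strictly inside the interval $(r(g),\infty)$ on which $g$ has constant sign, so this zero of $g$ is necessarily a local extremum and $g'(\rho_i) = 0$. Therefore $g \in W = \R p$, that is, $g = \lambda p$ for some $\lambda \in \R$.

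It remains to draw the two conclusions. If $r(g) < \rho_0$, then $g = \lambda p$ is impossible for $\lambda \ne 0$ because rescaling does not change $r(p) = \rho_0$; hence $g \equiv 0$, which is the first assertion. For the second, any linear combination $g$ of $q_0,\dots,q_{2k+1}$ with $g(0)=0$ and $r(g) = \rho_0$ also falls under the second step and is thus a scalar multiple of $p$, while $p$ itself realizes $r(p) = \rho_0$ by construction; so $p$ is, up to scaling, the unique minimizer. I do not expect a genuine obstacle here: all the real content lies in Proposition~\ref{proposition:summation-formula}, which we may assume. The only points needing care are bookkeeping ones---verifying that $\rho_0$ itself (not just $\rho_1,\dots,\rho_k$) lies in the closed half-line where a nonzero $g$ has a well-defined weak sign, which is exactly why the definition of $r(g)$ uses $x \ge R$ and why we invoke continuity of $g$, and the elementary fact that a zero of a smooth function in the interior of a constant-sign interval is a critical point.
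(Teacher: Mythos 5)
Your proposal is correct and follows essentially the same route as the paper: apply the summation formula of Proposition~\ref{proposition:summation-formula} to force $g(\rho_0)=\dots=g(\rho_k)=0$ via the common sign of $c_0,\dots,c_k$, note the roots at $\rho_1,\dots,\rho_k$ must be critical points (even order) since $g$ cannot change sign past $r(g)$, and then invoke the non-degeneracy assumption that these conditions determine $g$ up to scaling. Your explicit verification that the uniqueness-with-normalization hypothesis yields a one-dimensional solution space is just a small elaboration of a step the paper states without comment.
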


In other words, although we have assumed only a strict local minimum for the
last sign change among polynomials with $k$ double roots, we have found the
global minimum among polynomials with no such restriction.  For example, when
$s=1$ and $k=64$, we find that $p$ is the best possible polynomial of degree
at most $4k+2=258$. This phenomenon not only certifies our numerics by
establishing matching lower bounds, but also helps explain why our algorithms
perform well: degeneracy is the only way to get stuck in a local optimum.

\begin{proof}[Proof of Corollary~\ref{cor:optimal}]
Suppose $g$ is a linear combination of $q_0,\dots,q_{2k+1}$ with $r(g) \le
\rho_0$, $g(0) = 0$, and $g(z) \ge 0$ for large $z$. By
Proposition~\ref{proposition:summation-formula},
\[
\sum_{i=0}^k c_i g(\rho_i) = - c_{k+1} g(0) = 0.
\]
Because $\rho_0 \ge r(g)$, all of $g(\rho_0),\dots,g(\rho_k)$ must be
nonnegative. It follows that $g$ must vanish at $\rho_0,\dots,\rho_k$, since
$c_0,\dots,c_k$ are nonzero and have the same sign. Furthermore,
$\rho_1,\dots,\rho_k$ must be roots of even order, since otherwise $g$ would
change sign beyond $r(g)$. However, we have assumed that the equations
$g(0)=0$, $g(\rho_i)=0$, and $g'(\rho_i)=0$ for $1 \le i \le k$ determine $g$
up to scaling.  Thus $g$ must be proportional to $p$, and the only way to
achieve $r(g) < r(p)$ is if $g$ vanishes identically.
\end{proof}

It will prove convenient to distinguish between $\rho_1,\dots,\rho_k$ and
perturbations of these points. For that purpose, we fix $\rho_1,\dots,\rho_k$
as the values described above, while $\rhot_1,\dots,\rhot_k$ are variables
taking values in some neighborhood of $\rho_1,\dots,\rho_k$.

The proof of Proposition~\ref{proposition:summation-formula} involves
carefully studying how different quantities behave as functions of
$\rhot_1,\dots,\rhot_k$.  We can set up simultaneous linear equations to
determine the coefficients of $q_0,\dots,q_{2k+1}$ as follows. Write $\alpha
= (\alpha_j)_{0 \le j \le 2k+1}$ for the column vector of coefficients (all
vectors will be column vectors unless otherwise specified, sometimes indexed
starting with $0$ and sometimes with $1$), and define the entries of the
matrix $M = (M_{i,j})_{0 \le i,j \le 2k+1}$ as follows:
\[
M_{i,j} = \begin{cases}
q_j(0) & \textup{for $i=0$,}\\
q_j(\rhot_i) & \textup{for $1 \le i \le k$,}\\
q_j'(\rhot_{i-k}) & \textup{for $k+1 \le i \le 2k$, and}\\
\delta_{j,2k+1} & \textup{for $i=2k+1$.}
\end{cases}
\]
Let $v = (\delta_{i,2k+1})_{0 \le i \le 2k+1}$.  Then the equation $M \alpha
= v$ expresses the constraints that $\sum_{j=0}^{2k+1} \alpha_j q_j$ vanishes
at $0$, vanishes to second order at $\rhot_1,\dots,\rhot_k$, and has
$\alpha_{2k+1}=1$.

We write $\rhot = (\rhot_1,\dots,\rhot_k)$ and $\rho =
(\rho_1,\dots,\rho_k)$.  When necessary to avoid confusion, we write
$M(\rhot)$ for the matrix depending on $\rhot$, $\alpha(\rhot)$ for the
solution of $M(\rhot) \alpha = v$ if $M(\rhot)$ is invertible, and
$p_{\rhot}$ for the corresponding linear combination $\sum_{j=0}^{2k+1}
\alpha_j q_j$ of $q_0,\dots,q_{2k+1}$.  Thus, the polynomial $p$ discussed
above amounts to $p_{\rho}$.

We have assumed that $M(\rho)$ is invertible, which means that
$\alpha(\rhot)$ and $p_{\rhot}$ are smooth functions of $\rhot$ defined on
some neighborhood of $\rho$.  Because $p_{\rho}$ has a single root at
$\rho_0$, $p_{\rhot}$ has a single root at some smooth function $\rhot_0$ of
$\rhot_1,\dots,\rhot_k$ with $\rhot_0(\rho) = \rho_0$, by the implicit
function theorem.  We will always assume that $\rhot$ is in a small enough
neighborhood of $\rho$ for this to be true.  Furthermore, our assumptions so
far imply that $r(p_{\rhot}) = \rhot_0$ for $\rhot$ in some neighborhood of
$\rho$, and again we restrict our attention to such a neighborhood.

Because of our assumption of local minimality, the function $\rhot_0$ must
have a stationary point at $\rho$. In other words,
\[
\frac{\partial \rhot_0}{\partial \rhot_i}(\rho) = 0
\]
for $1 \le i \le k$.  In addition, $\rhot_0 > \rho_0$ for $\rhot \ne \rho$ in
some small neighborhood of $\rho$ by strict local minimality.  Once again we
confine $\rhot$ to such a neighborhood.

\begin{lemma} \label{lemma:linearly-independent}
The vectors $\alpha(\rho)$ and $(\partial \alpha / \partial \rhot_i)(\rho)$
with $1 \le i \le k$ are linearly independent.
\end{lemma}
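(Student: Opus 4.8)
The plan is to differentiate the defining identity $M(\rhot)\,\alpha(\rhot) = v$ and to show that the vectors $\alpha(\rho)$ and $(\partial \alpha/\partial \rhot_i)(\rho)$ are, up to nonzero scalars, the images under the invertible matrix $M(\rho)^{-1}$ of $k+1$ distinct standard basis vectors, so that their linear independence becomes transparent. Write $e_0,\dots,e_{2k+1}$ for the standard basis of $\R^{2k+2}$ in the indexing used for $M$, so that $v = e_{2k+1}$ and $\alpha(\rho) = M(\rho)^{-1}e_{2k+1}$. Since $M(\rho)$ is invertible, $\alpha(\rhot)$ is smooth near $\rho$, and differentiating $M(\rhot)\,\alpha(\rhot) = v$ with respect to $\rhot_i$ and evaluating at $\rhot = \rho$ gives
\[
\frac{\partial M}{\partial \rhot_i}(\rho)\,\alpha(\rho) + M(\rho)\,\frac{\partial \alpha}{\partial \rhot_i}(\rho) = 0,
\]
so $\frac{\partial \alpha}{\partial \rhot_i}(\rho) = -M(\rho)^{-1}\bigl(\frac{\partial M}{\partial \rhot_i}(\rho)\,\alpha(\rho)\bigr)$.

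Next I would evaluate $\frac{\partial M}{\partial \rhot_i}(\rho)\,\alpha(\rho)$ explicitly. Inspecting the definition of $M$, the variable $\rhot_i$ enters only in row $i$, whose entries are $q_j(\rhot_i)$, and in row $k+i$, whose entries are $q_j'(\rhot_i)$; every other row is independent of $\rhot_i$. Hence $\frac{\partial M}{\partial \rhot_i}(\rho)$ has row $i$ equal to $(q_j'(\rho_i))_{0\le j\le 2k+1}$, row $k+i$ equal to $(q_j''(\rho_i))_{0\le j\le 2k+1}$, and all other rows zero. Applying this to $\alpha(\rho)$ and writing $p = p_\rho = \sum_j \alpha_j(\rho) q_j$, the $i$-th coordinate of the resulting vector is $p'(\rho_i)$, the $(k+i)$-th coordinate is $p''(\rho_i)$, and all other coordinates vanish. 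Because $p$ has a double root at $\rho_i$ we have $p'(\rho_i) = 0$, so
\[
\frac{\partial M}{\partial \rhot_i}(\rho)\,\alpha(\rho) = p''(\rho_i)\,e_{k+i},
\]
and therefore $\frac{\partial \alpha}{\partial \rhot_i}(\rho) = -p''(\rho_i)\,M(\rho)^{-1}e_{k+i}$.

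Finally, because $p$ has a root of order \emph{exactly} $2$ at each $\rho_i$, the scalars $p''(\rho_i)$ are nonzero for $1 \le i \le k$. Thus $\alpha(\rho), (\partial \alpha/\partial \rhot_1)(\rho),\dots,(\partial \alpha/\partial \rhot_k)(\rho)$ are the images under $M(\rho)^{-1}$ of $e_{2k+1}$ and the nonzero scalar multiples $-p''(\rho_i)e_{k+i}$ of $e_{k+1},\dots,e_{2k}$. The indices $k+1,k+2,\dots,2k,2k+1$ are pairwise distinct, so $e_{k+1},\dots,e_{2k},e_{2k+1}$ are linearly independent; rescaling by nonzero scalars and then applying the invertible linear map $M(\rho)^{-1}$ preserves independence, which proves the lemma. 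The only steps needing care are the bookkeeping of which rows of $M$ depend on $\rhot_i$ and the appeal to the hypothesis that the roots of $p$ at $\rho_1,\dots,\rho_k$ have order exactly $2$ to guarantee $p''(\rho_i)\ne 0$; there is no genuine analytic difficulty.
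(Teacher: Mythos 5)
Your proof is correct and follows essentially the same route as the paper: differentiate $M\alpha = v$, observe that $(\partial M/\partial\rhot_i)(\rho)\,\alpha(\rho)$ is supported in coordinate $k+i$ with value $p''(\rho_i)\ne 0$ (using $p'(\rho_i)=0$ and the exact-order-two hypothesis), and conclude via invertibility of $M(\rho)$. The only cosmetic difference is that you fold $\alpha(\rho)=M(\rho)^{-1}e_{2k+1}$ into the same picture of preimages of distinct basis vectors, whereas the paper separates $\alpha$ from the derivatives by noting $\alpha_{2k+1}=1$ while the derivatives vanish in that coordinate.
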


\begin{proof}
The vector $\alpha$ has $\alpha_{2k+1}=1$, while all the partial derivatives
$\partial \alpha / \partial \rhot_i$ vanish in that coordinate.  Thus, it
will suffice to show that the partial derivatives are linearly independent at
$\rho$, and because $M$ is invertible, we can examine $M (\partial \alpha /
\partial \rhot_i)$ instead of $\partial \alpha / \partial \rhot_i$.

Differentiating $M \alpha = v$ shows that
\[
M \frac{\partial \alpha}{\partial \rhot_i} = - \frac{\partial M}{\partial \rhot_i} \alpha.
\]
The matrix $\partial M / \partial \rhot_i$ vanishes except in rows $i$ and
$k+i$, and the entries of $(\partial M / \partial \rhot_i) \alpha$ in those
rows are $p_{\rhot}'(\rhot_i)$ and $p_{\rhot}''(\rhot_i)$, respectively. We
have $p_{\rhot}'(\rhot_i)=0$ by construction, but $p_{\rho}''(\rho_i) \ne 0$.
Thus, the vectors $(\partial M /
\partial \rhot_i)(\rho)\, \alpha(\rho)$ 
are linearly independent, as desired.
\end{proof}

\begin{lemma} \label{lemma:formula-exists}
There are real numbers $c_0,\dots,c_{k+1}$, not all zero, such that
\[
\sum_{i=0}^k c_i g(\rho_i) + c_{k+1} g(0) = 0
\]
for every linear combination $g$ of $q_0,\dots,q_{2k+1}$.
\end{lemma}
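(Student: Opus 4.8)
The plan is to realize the asserted relation as a linear dependence among the $k+2$ point-evaluation functionals $g \mapsto g(\rho_0),\dots,g\mapsto g(\rho_k),g\mapsto g(0)$ on the space $V$ of linear combinations of $q_0,\dots,q_{2k+1}$. Since the $q_j$ have pairwise distinct degrees, $\dim V = 2k+2$. Form the linear map $T \colon V \to \R^{k+2}$ given by $T(g) = \bigl(g(\rho_0),g(\rho_1),\dots,g(\rho_k),g(0)\bigr)$. A nonzero tuple $(c_0,\dots,c_{k+1})$ with $\sum_{i=0}^k c_i g(\rho_i) + c_{k+1} g(0) = 0$ for every $g \in V$ is exactly a nonzero linear functional on $\R^{k+2}$ that vanishes on the image of $T$, and such a functional exists if and only if $T$ is not surjective. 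It therefore suffices to prove $\rank(T) \le k+1$, or equivalently $\dim\ker T \ge k+1$.

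To this end I will exhibit $k+1$ linearly independent elements of $\ker T$, namely $p = p_{\rho}$ together with $h_i := \bigl(\partial p_{\rhot}/\partial \rhot_i\bigr)\big|_{\rhot=\rho}$ for $1 \le i \le k$. These all lie in $V$: indeed $h_i = \sum_j \bigl(\partial \alpha_j/\partial \rhot_i\bigr)(\rho)\,q_j$, so the coefficient vectors of $p,h_1,\dots,h_k$ are $\alpha(\rho),(\partial\alpha/\partial\rhot_1)(\rho),\dots,(\partial\alpha/\partial\rhot_k)(\rho)$, which are linearly independent by Lemma~\ref{lemma:linearly-independent}; since $q_0,\dots,q_{2k+1}$ are linearly independent, $p,h_1,\dots,h_k$ are linearly independent in $V$. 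It remains to check that each of them vanishes at $0,\rho_0,\rho_1,\dots,\rho_k$. For $p$ this is built in: $p(0)=0$ and $p(\rho_i)=0$ for $1\le i\le k$ by construction, while $p(\rho_0)=0$ because $\rho_0 = r(p)$ is a root of $p$.

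For the $h_i$, I differentiate in $\rhot_i$ the identities that $p_{\rhot}$ satisfies for all $\rhot$ in a neighborhood of $\rho$, and then evaluate at $\rhot = \rho$. From $p_{\rhot}(0)=0$ we get $h_i(0)=0$. From $p_{\rhot}(\rhot_\ell)=0$ we get $h_i(\rho_\ell) + p_{\rho}'(\rho_\ell)\,\delta_{i\ell} = 0$, and since $\rho_\ell$ is a double root of $p$ we have $p_{\rho}'(\rho_\ell)=0$, hence $h_i(\rho_\ell)=0$ for $1\le\ell\le k$. Finally, from $p_{\rhot}\bigl(\rhot_0(\rhot)\bigr)=0$ we obtain
\[
h_i(\rho_0) + p_{\rho}'(\rho_0)\,\frac{\partial \rhot_0}{\partial \rhot_i}(\rho) = 0 .
\]
This last identity is the crux of the argument: the factor $p_{\rho}'(\rho_0)$ is nonzero because $\rho_0$ is a simple root of $p$, while $(\partial\rhot_0/\partial\rhot_i)(\rho) = 0$ by the stationarity of $\rhot_0$ at $\rho$ — precisely the consequence of local minimality of the last sign change recorded above. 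Hence $h_i(\rho_0)=0$, and so $p,h_1,\dots,h_k$ all lie in $\ker T$. Consequently $\dim\ker T \ge k+1$, so $\rank(T) \le (2k+2)-(k+1) = k+1 < k+2$, the map $T$ is not surjective, and a nonzero tuple $(c_0,\dots,c_{k+1})$ annihilating its image yields the formula. The one nonformal ingredient is the vanishing $h_i(\rho_0)=0$, which hinges on simultaneously using the simplicity of the root $\rho_0$ and the stationarity condition; everything else is bookkeeping with the defining linear system $M\alpha=v$ and a dimension count.
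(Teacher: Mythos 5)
Your proof is correct and follows essentially the same route as the paper: both exhibit $p_\rho$ and the $k$ derivatives $\partial p_{\rhot}/\partial\rhot_i$ at $\rho$ (linearly independent via Lemma~\ref{lemma:linearly-independent}) as elements annihilated by the evaluation map at $0,\rho_0,\dots,\rho_k$, using the double-root conditions to kill $p'(\rho_\ell)$ and the stationarity of $\rhot_0$ at $\rho$ to kill the $\rho_0$ term, and then conclude by the rank count $k+1 < k+2$. The only difference is cosmetic: the paper phrases this with the $(k+2)\times(2k+2)$ matrix $T$ acting on coefficient vectors, while you work directly with the evaluation map on the function space.
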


This lemma differs from Proposition~\ref{proposition:summation-formula} in
not asserting uniqueness or sign conditions for $c_0,\dots,c_{k+1}$.

\begin{proof}
Define the matrix
\[
T = (T_{i,j})_{\substack{0 \le i \le k+1\\  0 \le j \le 2k+1}}
\]
by
\[
T_{i,j} = \begin{cases} q_j(\rho_i) & \textup{for $0 \le i \le k$, and}\\
q_j(0) & \textup{for $i = k+1$}.
\end{cases}
\]
Then
\[
(c_0,\dots,c_{k+1})^{\top}T = \left(\sum_{i=0}^k c_i q_j(\rho_i) + c_{k+1} q_j(0)\right)^{\top}_{0 \le j \le 2k+1}
\]
for all row vectors $(c_0,\dots,c_{k+1})^{\top}$. Thus, the desired summation
formula amounts to a nonzero row vector in the kernel of right multiplication
by $T$.  To prove that such a vector exists, we will show that $\rank(T) <
k+2$.

It will suffice to find $k+1$ linearly independent vectors in the kernel of
left multiplication by $T$, because $(2k+2)-(k+1) < k+2$.  Those vectors will
be $\alpha(\rho)$ and $(\partial \alpha/\partial \rhot_i)(\rho)$ for $1 \le i
\le k$, which are linearly independent by
Lemma~\ref{lemma:linearly-independent}. All that remains is to prove that
they are in the kernel of $T$.

We have $T\alpha = (p_{\rhot}(\rho_0),\dots,p_{\rhot}(\rho_k),p_{\rhot}(0))$,
and thus $T \alpha(\rho) = 0$.  For the partial derivatives, we must show
that
\begin{equation} \label{eq:partial}
\sum_{j=0}^{2k+1} \frac{\partial \alpha_j}{\partial \rhot_i}(\rho) \, q_j(\rho_n)  = 0 
\end{equation}
for $0 \le n \le k$ and
\[
\sum_{j=0}^{2k+1} \frac{\partial \alpha_j}{\partial \rhot_i}(\rho) \, q_j(0)  = 0. 
\]
The latter equation follows from differentiating the identity
\[
\sum_{j=0}^{2k+1} \alpha_j q_j(0)  = 0.
\]
To prove \eqref{eq:partial}, we start with the fact that
\[
\sum_{j=0}^{2k+1} \alpha_j q_j(\rhot_n) = 0
\]
for $0 \le n \le k$.  Differentiating with respect to $\rhot_i$ shows that
\[
\sum_{j=0}^{2k+1} \frac{\partial \alpha_j}{\partial \rhot_i} q_j(\rhot_n) + \sum_{j=0}^{2k+1} \alpha_j q_j'(\rhot_n) \frac{\partial \rhot_n}{\partial \rhot_i}= 0
\]
It follows that
\[
\sum_{j=0}^{2k+1} \frac{\partial \alpha_j}{\partial \rhot_i}(\rho) \, q_j(\rho_n)  = 0, 
\]
because $\partial \rhot_0/\partial \rhot_i$ vanishes at $\rho$ while for $1
\le n \le k$,
\[
\sum_{j=0}^{2k+1} \alpha_j q_j'(\rhot_n) = 0.
\]
We have therefore found $k+1$ linearly independent vectors in the kernel of
left multiplication by $T$, as desired.
\end{proof}

\begin{proof}[Proof of Proposition~\ref{proposition:summation-formula}]
By Lemma~\ref{lemma:formula-exists}, a summation formula exists, and all that
remains is to prove uniqueness and the sign conditions.

Because $M(\rho)$ is nonsingular, the values $g(0)$ and $g(\rho_i)$ with $1
\le i \le k$ can be chosen arbitrarily.  Thus, the summation formula must be
unique up to scaling, and the coefficient $c_0$ of $\rho_0$ cannot vanish.

Now let $1 \le i \le k$, and let $\rhot$ equal $\rho$ except in the $i$-th
coordinate, where $\rhot_i = \rho_i+\varepsilon$ with $\varepsilon>0$ small.
Then $p_{\rhot}(\rho_i)$ and $p_{\rhot}(\rho_0)$ have opposite signs because
$r(p_{\rhot}) > r(p_{\rho})$, while $p_{\rhot}$ vanishes at the rest of
$\rho_1,\dots,\rho_k$. It follows from taking $g = p_{\rhot}$ that $c_i$ must
be nonzero, with the same sign as $c_0$.

Finally, when $s=1$ we can compute the sign of $c_{k+1}$ by taking $g=q_0=1$
to obtain
\[
\sum_{i=0}^{k+1}c_i = 0. \qedhere
\]
\end{proof}

When $s=-1$, we conjecture that $c_{k+1}$ always has the same sign as
$c_0,\dots,c_k$.  This conjecture holds for every case listed in
Table~\ref{table:computed}.

\section*{Acknowledgments}

We thank Noam Elkies and the anonymous referees for their helpful comments on
the manuscript, and David de~Laat for carrying out computations by a
different method that clarified the behavior of our computations for $d \le
4$.


\begin{thebibliography}{99}
\bibitem{BD} A.~Bonami and B.~Demange, \emph{A survey on uncertainty
    principles related to quadratic forms}, Collect.\ Math.\ \textbf{57} (2006),
    1--36. \MR{2264204}

\bibitem{BCK} J.~Bourgain, L.~Clozel, and J.-P.~Kahane, \emph{Principe
    d'Heisenberg et fonctions positives}, Ann.\ Inst.\ Fourier (Grenoble)
    \textbf{60} (2010), no.~4, 1215--1232. \doi{10.5802/aif.2552} \MR{2722239}

\bibitem{C} H.~Cohn, \emph{New upper bounds on sphere packings II}, Geom.\
    Topol.\ \textbf{6} (2002), 329--353. \doi{10.2140/gt.2002.6.329}
    \MR{1914571} \arXiv{math.MG/0110010}

\bibitem{C2} H.~Cohn, \emph{A conceptual breakthrough in sphere packing},
    Notices Amer.\ Math.\ Soc.\ \textbf{64} (2017), no.\ 2, 102--115. \doi{10.1090/noti1474}
    \MR{3587715} \arXiv{1611.01685}

\bibitem{CE} H.~Cohn and N.~Elkies, \emph{New upper bounds on sphere
    packings I}, Ann.\ of Math.\ (2) \textbf{157} (2003), no.\ 2, 689--714.
    \doi{10.4007/annals.2003.157.689} \MR{1973059} \arXiv{math.MG/0110009}

\bibitem{CG} H.~Cohn and F.~Gon\c{c}alves, \emph{Data for ``An optimal
    uncertainty principle in twelve dimensions via modular forms''}, data
    set, DSpace@MIT, 2018.
    \url{http://hdl.handle.net/1721.1/118165}

\bibitem{CK} H.~Cohn and A.~Kumar, \emph{Optimality and uniqueness of the
    Leech lattice among lattices}, Ann.\ of Math.\ (2) \textbf{170} (2009),
    no.\ 3, 1003--1050. \doi{10.4007/annals.2009.170.1003} \MR{2600869}
    \arXiv{math.MG/0403263}

\bibitem{CKMRV} H.~Cohn, A.~Kumar, S.~D.~Miller, D.~Radchenko, and
    M.~Viazovska, \emph{The sphere packing problem in dimension $24$},
    Ann.\ of Math.\ (2) \textbf{185} (2017), no.\ 3,
    1017--1033. \doi{10.4007/annals.2017.185.3.8} \MR{3664817}
    \arXiv{1603.06518}

\bibitem{CKMRV2} H.~Cohn, A.~Kumar, S.~D.~Miller, D.~Radchenko, and
    M.~Viazovska, \emph{Universal optimality of the $E_8$ and Leech lattices and interpolation
    formulas}, preprint, 2019. \arXiv{1902.05438}

\bibitem{CM} H.~Cohn and S.~D.~Miller, \emph{Some properties of optimal
    functions for sphere packing in dimensions $8$ and $24$}, preprint, 2016.
    \arXiv{1603.04759}

\bibitem{CZ} H.~Cohn and Y.~Zhao, \emph{Sphere packing bounds via spherical
    codes}, Duke Math.\ J.\ \textbf{163} (2014), no.\ 10, 1965--2002.
    \doi{10.1215/00127094-2738857} \MR{3229046} \arXiv{1212.5966}

\bibitem{GOS} F.~Gon\c{c}alves, D.~Oliveira e Silva, and
    S.~Steinerberger, \emph{Hermite polynomials, linear flows on the torus,
    and an uncertainty principle for roots}, J.\ Math.\ Anal.\ Appl.\ \textbf{451}
    (2017), no.\ 2, 678--711. \doi{10.1016/j.jmaa.2017.02.030} \MR{3624763}
    \arXiv{1602.03366}

\bibitem{H} T.~C.~Hales, \emph{Cannonballs and honeycombs}, Notices
    Amer.\ Math.\ Soc.\ \textbf{47} (2000), no.\ 4, 440--449.
    \MR{1745624}

\bibitem{J} P.~Jaming, \emph{Nazarov's uncertainty principles in higher
    dimension}, J.\ Approx.\ Theory \textbf{149} (2007), no.\ 1, 30--41.
    \doi{10.1016/j.jat.2007.04.005} \MR{2371612}

\bibitem{N} F.~L.~Nazarov, \emph{Local estimates for exponential polynomials
    and their applications to inequalities of the uncertainty principle type}
    (Russian), Algebra i Analiz \textbf{5} (1993), no.\ 4, 3--66; translation
    in St.\ Petersburg Math.\ J.\ \textbf{5} (1994), no.\ 4, 663--717.
    \MR{1246419}

\bibitem{RV} D.~Radchenko and M.~Viazovska, \emph{Fourier interpolation on
    the real line}, Publ.\ Math.\ Inst.\ Hautes \'{E}tudes Sci., to appear.
    \doi{10.1007/s10240-018-0101-z}
    \arXiv{1701.00265}.

\bibitem{S} G.\ Szeg\H o, \emph{Orthogonal Polynomials}, fourth
    edition, AMS Colloquium Publications \textbf{23}, American Mathematical Society,
    Providence, RI, 1975. \MR{0372517}

\bibitem{V} M.~S.~Viazovska, \emph{The sphere packing problem in dimension
    $8$}, Ann.\ of Math.\ (2) \textbf{185} (2017), no.\ 3, 991--1015. \doi{10.4007/annals.2017.185.3.7}
    \MR{3664816} \arXiv{1603.04246}

\bibitem{Z} D.~Zagier, \emph{Elliptic modular forms and their
    applications}, in \emph{The 1-2-3 of Modular Forms}, Universitext,
    Springer-Verlag, New York, 2008, pp.~1--103. \doi{10.1007/978-3-540-74119-0_1} \MR{2409678}
\end{thebibliography}
\end{document}